\documentclass[11pt,reqno]{article}

\usepackage[usenames,dvipsnames]{xcolor}
\RequirePackage[OT1]{fontenc}
\usepackage[colorlinks,citecolor=blue,urlcolor=blue,linkcolor=blue,linktocpage=true]{hyperref}
\usepackage{amsmath}
\usepackage{amssymb,amsthm,amsfonts,amsbsy,latexsym,amsxtra}
\usepackage{graphicx}
\usepackage{todonotes}
\usepackage{appendix}
\usepackage{url}
\usepackage{comment}

\usepackage[dvipsnames]{xcolor}              
\usepackage{graphicx}
\usepackage{bbm,bm}
\usepackage{comment}
\usepackage{mathtools}
\usepackage{placeins}
\usepackage[labelfont={sl}]{caption}
\usepackage[labelfont={normalfont}]{subcaption}
\usepackage{thmtools}

\usepackage{amssymb,amsfonts,amsmath,amsthm,amscd,dsfont,mathrsfs}
\usepackage{url}
\usepackage{enumerate}
\usepackage{hyperref}

\usepackage{tikz}
\usetikzlibrary{decorations.pathreplacing,calligraphy}

\makeatletter
\def\namedlabel#1#2{\begingroup
	#2%
	\def\@currentlabel{#2}%
	\phantomsection\label{#1}\endgroup
}
\makeatother

\numberwithin{equation}{section}

\newtheorem{theorem}{Theorem}[section]

\newtheorem{prop}[theorem]{Proposition}
\newtheorem{lemma}[theorem]{Lemma}
\newtheorem{corollary}[theorem]{Corollary}

\theoremstyle{definition}
\newtheorem{definition}{Definition}[section]

\newtheorem{remark}[theorem]{Remark}

\newcommand{\E}{\mathbb{E}}
\newcommand{\R}{\mathbb{R}}

\newcommand{\N}{\mathbb{N}}

\newcommand{\eps}{\varepsilon}
\renewcommand{\P}{\mathbb{P}}

\renewcommand{\emptyset}{\varnothing}
\newcommand{\X}{\mathbb{X}}

\newcommand{\Nb}{\mathbf{N}}

\newcommand{\Var}{\operatorname{Var}}
\newcommand{\dd}{{\mathrm d}}
\newcommand{\Occ}{{\operatorname{Occ}}}

\usepackage{color}

\allowdisplaybreaks[4]

\begin{document}

\title{Chaos and Superconcentration for Poisson Functionals with Applications in Stochastic Geometry}

\author{
Chinmoy Bhattacharjee\thanks{
Department of Mathematics, University of Hamburg,
Bundesstraße 55, 20146 Hamburg, Germany.
\texttt{chinmoy.bhattacharjee@uni-hamburg.de}
}
\and
Rowan O'Clarey\thanks{
Department of Mathematics, University of Hamburg,
Bundesstraße 55, 20146 Hamburg, Germany.
\texttt{rowan.o.clarey@uni-hamburg.de}
}
}

\date{\today}

\maketitle

\begin{abstract}
	We consider square-integrable functionals of Poisson point processes for which the variance upper bound provided by the classical Poincar\'{e} inequality is suboptimal, a phenomenon known as superconcentration. In this paper, we establish a rigorous mathematical equivalence between superconcentration and the chaotic behaviour of the functional, and certain associated random sets, under perturbations driven by the Ornstein-Uhlenbeck semigroup on the Poisson space. Leveraging the Malliavin-Stein method, we develop general variance identities and bounds for Poisson functionals, providing a unified framework to prove superconcentration, particularly for geometric functionals that can be expressed as a sum of local score functions. We apply our results to rigorously establish superconcentration and the chaotic behaviour in some models of stochastic geometry. Specifically, we analyse horizontal box-crossing indicators in certain critical continuum percolations, as well as the number of vertices with small degrees and the number of isolated $\Gamma$-components in random geometric graphs in the dense regime.
\end{abstract}
\noindent{\bf Keywords}: Poisson process, superconcentration, chaos, Poincar\'{e} inequality, pivotal points, percolation, random geometric graphs, isolated vertices
\\
\noindent{\bf AMS 2020 Classification:} 
  60D05, 
  60H07, 
  60E15, 
  60G55, 
  82B43,  
  05C80   

\section{Introduction and Main Results}\label{sec:intro}

A central objective of probability theory is to understand the fluctuations of observable statistics, that is, functionals of random systems. Although the theory of concentration of measure has undergone substantial development over the past fifty years, in certain specific examples, the classical concentration framework yields bounds on fluctuations that are not of the correct order. This intriguing phenomenon, commonly referred to as \textit{superconcentration}, has been observed in several areas of probability, sometimes under different terminology, such as sub-mean behaviour or sub-linear variance. A systematic investigation of superconcentration was initiated by Chatterjee in \cite{chatterjee2008b} and further developed in the monograph \cite{Chatterjee}, where a rigorous connection was established between superconcentration and chaotic behaviour of the ground state in certain Gaussian disordered systems, a phenomenon known as \textit{chaos}.

The phenomenon of superconcentration has seen a lot of interest recently in various contexts such as surface growth models \cite{Cha23}, spin glass models \cite{CVVH23}, first passage percolation \cite{Dembin24}, Sherrington-Kirkpatrick model \cite{CWW24}, and random matrices \cite{RvH26}, to name a few. On the other hand, the study of chaos and the related concept of noise sensitivity has long been established, originating with the seminal work of Benjamini, Kalai, and Schramm \cite{BKS} on Boolean functions and Bernoulli percolation. Building on this foundation, there has been a recent surge of interest in understanding noise sensitivity and chaos in spatial growth models and continuum spaces. For instance, Ahlberg et al.\ \cite{Ahlberg14} established noise sensitivity for the critical Poisson Boolean percolation (see also \cite{bhattacharjee2024spectrapoissonfunctionalsapplications}), while other recent works have explored the transition to chaos in last passage percolation in $\mathbb{Z}^d$ \cite{Ahlberg2024} and Brownian last passage percolation \cite{GH24}. 

Alongside these developments, stochastic analysis on the Poisson space, particularly the use of Malliavin calculus and the Ornstein-Uhlenbeck semigroup, have revolutionised variance estimations for Poisson functionals \cite{LastPenrose,PeccatiReitzner}.  Recent breakthroughs have provided sharp tools for functional inequalities, such as restricted hypercontractivity \cite{NourdinPeccatiYang} and tight variance lower bounds \cite{SchulteTrapp}.
The primary aim of this work is to bridge these two active domains: we establish a rigorous equivalence between superconcentration and chaos for Poisson functionals, and provide a general framework to study both phenomena for geometric functionals of point processes.

Let $\eta$ be a Poisson process on a measurable space $(\mathbb{X}, \mathcal{X})$ with a $\sigma$-finite intensity measure $\lambda$ and distribution $\mathbb{P}_\eta$. The process $\eta$ (which we interchangeably interpret as a random counting measure) is a random element in the space $\mathbf{N}$ which is the family of $\sigma$-finite counting measures $\mu$ on $\X$ equipped with the smallest $\sigma$-algebra $\mathcal{N}$ such that the maps $\mu \mapsto \mu(A)$ are measurable for all $A \in \mathcal{X}$. For a measurable function $F:\mathbf{N} \to \R$, a locally finite configuration $\mu \in \mathbf{N}$, and $x\in \X$ with $\delta_x$ representing the Dirac measure at $x$, we will denote by $D_xF(\mu)$ the \textit{add-one cost operator}, defined as
\begin{equation}\label{eq:addone}
	D_xF(\mu)=F(\mu+\delta_x)-F(\mu),
\end{equation}
and by $D_x^-F(\mu)$ the analogously defined \textit{remove-one cost operator}, namely,
\begin{align*}
	D_x^-F(\mu)=F(\mu)-F(\mu-\delta_x), \quad x\in\mu.
\end{align*}
We write $F \in L^2_\eta$ whenever $\E [ F^2(\eta)]<\infty$, and denote by $DF$ the map $(x,\mu) \mapsto D_x F(\mu)$. A classical variance bound for square-integrable functionals of Poisson processes is the \textit{Poincar\'e inequality} (see \cite[Remark 1.4]{Wu2000}): for all $F \in L^2_\eta$,
\begin{align}\label{thm:poincare}
        \operatorname{Var} (F(\eta)) \le \E \int_\X (D_x F(\eta))^2 \lambda(\dd x).
\end{align}
While it is well known that the Poincar\'{e} inequality is tight and cannot be improved for a general Poisson functional, for certain special Poisson functional $F$, the Poincaré inequality can be suboptimal. In the following definition, we make this notion precise, and define superconcentration for Poisson functionals in a similar manner as in \cite[Definition 3.1]{Chatterjee}.
\begin{definition}\label{def:supconc}
	Let $(\mathbb{X}, \mathcal{X})$ be a Borel measurable space. For $s>0$, let $\eta_s$ be a Poisson process on $(\mathbb{X}, \mathcal{X})$ with a $\sigma$-finite intensity measure $\lambda_s$. For $\eps_s \to 0$ as $s \to \infty$, we say that a square-integrable functional $F_s \equiv F_s(\eta_s) \in L^2_{\eta_s}$ is \textit{$\eps_s$-superconcentrated} if 
	\begin{equation*}
		\operatorname{Var}(F_s(\eta_s)) \le \eps_s \;\E \int_\X (D_x F_s(\eta_s))^2 \lambda_s(\dd x).
	\end{equation*}
\end{definition}
Thus, superconcentration means that the fluctuation of $F_s$ is asymptotically much smaller than what is guaranteed by the Poincaré inequality.

It has been shown in \cite{Chatterjee} in Gaussian and discrete settings, that the phenomenon of superconcentration for a functional $f$ (on the respective underlying space) is closely related to a chaotic behaviour of certain random sets associated to $f$ with respect to the initial conditions, that is, when the initial configuration is perturbed even slightly, it results in a significant change in the composition of the random set. In this paper, we show that a similar equivalence also holds for Poisson functionals. Let us first define the notion of chaos for a Poisson functionals, in the same spirit as in \cite[Definition 3.3]{Chatterjee}. To do so, we first need to specify a notion of perturbation of the underlying Poisson process $\eta$ with intensity measure $\lambda$. On the Poisson space, a very natural way to perturb the configuration is through a birth-death process, the so-called \textit{Ornstein-Uhlenbeck (OU)} semigroup. One can define the OU semigroup of operators $(P_t)_{t \ge 0}$ acting on measurable functionals $F:\mathbf{N} \to \R$ (henceforth referred to as \textit{Poisson functionals}), given by
\begin{equation}\label{eq:Meh}
	P_t F (\eta) := \E [F (\eta^t) | \eta] = \E [F (\eta_{{e^{-t}}} + \eta'_{{1-e^{-t}}}) | \eta], \quad t \ge 0,
\end{equation}
where $\eta_{{e^{-t}}}$ is a $e^{-t}$-thinning of $\eta$ (that is, the point configuration obtained by keeping each point in $\eta$ independently with probability $e^{-t}$, see e.g.\ \cite[Chapter\ 5]{LastPenrose}) and $\eta'_{{1-e^{-t}}}$ is a Poisson process with intensity $(1-e^{-t}) \lambda$ independent of $(\eta, \eta_{{e^{-t}}})$.

\begin{definition}\label{definition:chaos}
	Let $\eta_s$ be as in \autoref{def:supconc}. For $\eps_s, \delta_s \to 0$ as $s \to \infty$, we say that a Poisson functional $F_s \equiv F_s(\eta_s) \in L^2_{\eta_s}$ is $(\eps_s,\delta_s)$-chaotic with respect to the OU semigroup if for all $t \ge \delta_s>0$,
	$$
	\E \int_\X (D_x F_s(\eta_s) D_x F_s(\eta_s^t)) \lambda_s(\dd x) 
	\le \eps_s \; \E \int_\X (D_x F_s(\eta_s))^2  \lambda_s(\dd x).
	$$
\end{definition}
As observed in \cite{Chatterjee}, chaos for a functional often results in a chaotic behaviour of an associated random set. More precisely, for $\eps_s, \delta_s \to 0$ as $s \to \infty$, we say that a (almost surely finite) random set $A_s \equiv A_s(\eta_s)$ is \textit{$(\eps_s,\delta_s)$-chaotic} with respect to the OU semigroup when for all $t\ge \delta_s>0$,
\begin{equation}\label{def:Chaos}
	e^{-t}\,\E |A_s^0 \cap A_s^t|  \le \eps_s \E |A_s|,
\end{equation}
where we write $A_s^t : = A_s(\eta_s^t)$ for $t \ge 0$. 
This means that in expectation, the composition of the set $A_s^t$ changes almost completely by any finite time $t \ge \delta_s$, with only a smaller order (relative to its size) number of elements in common with the initial set $A_s \equiv A_s^0$. 
	\medskip
	
\noindent\textbf{Notation.} Throughout the paper, we will often simply write $A_s$ for $A_s^0$ for such set-valued processes. For nonnegative functions $f$ and $g$ on $[0,\infty)$, we write $f \asymp g$ if there exist constants $0 < C_1 \le C_2 < \infty$ such that $C_1 \le f/g \le C_2$. To denote one-sided bounds, we use $f = \mathcal{O}(g)$ or $f \lesssim g$ (respectively, $f = \Omega(g)$ or $f \gtrsim g$) to indicate that $f \le Cg$ (respectively, $f \ge Cg$) for some $C \in (0,\infty)$. Finally, $f \ll g$ signifies that $f(x)/g(x) \to 0$ as $x \to \infty$.

\section{Main results}\label{sec:mainresults}

\subsection{Equivalence of Chaos and Superconcentration}\label{sec:chaosresults}
For functionals of an equilibrium measure of a reversible Markov process, \cite[Theorem 3.5]{Chatterjee} showed an equivalence between the notions of superconcentration and chaos. The argument in its proof applies to Poisson functionals as well, yielding the following equivalence result. For completeness, we provide its short proof in Section \ref{sec:chaosproof}. 
\begin{theorem}[Consequence of Theorem 3.5 in \cite{Chatterjee}]\label{thm:eqv}
	Let $\eta_s$ be as in \autoref{def:supconc}. Suppose that a functional $F_s \in L^2_{\eta_s}$ is $\eps_s$-Superconcentrated with $\eps_s \to 0$ as $s \to \infty$. Then for any $\delta_s>0$ with $\delta_s \to 0$ and $\eps_s/\delta_s \to 0$ as $s \to \infty$, we have $F_s$ is $(\eps_s e^{\delta_s}/\delta_s, \delta_s)$-chaotic.
	Conversely, every $(\eps_s,\delta_s)$-chaotic Poisson functional $F_s$ is also $(\eps_s+\delta_s)$-superconcentrated.
\end{theorem}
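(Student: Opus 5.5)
The plan is to use the covariance representation of the OU semigroup together with the spectral (chaos) decomposition. For $F\in L^2_\eta$ set
\[
\rho(t):=\E\int_\X D_xF(\eta)\,D_xF(\eta^t)\,\lambda(\dd x),
\]
where $(\eta,\eta^t)$ is the stationary OU pair from \eqref{eq:Meh}. Since $D_xP_tF=e^{-t}P_tD_xF$ and the pair $(\eta,\eta^t)$ is exchangeable, we get
\[
\rho(t)=e^{t}\,\E\int_\X D_xF\,D_xP_tF\,\lambda(\dd x).
\]
Expanding $F=\sum_{n\ge0}I_n(f_n)$ in Wiener--It\^o chaos, the known identities $\E\int (D_xF)(D_xP_tF)\,\lambda(\dd x)=\sum_{n\ge1} n\,n!\,\|f_n\|^2e^{-nt}$ and $\Var F=\sum_{n\ge1} n!\|f_n\|^2$ then give
\[
\rho(t)=\sum_{n\ge1} n\,n!\,\|f_n\|^2 e^{-(n-1)t}.
\]
Write $a_n=n!\|f_n\|^2\ge0$. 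Then $\Var F=\sum a_n$, $\rho(0)=\sum n a_n=\E\int (D_xF)^2\,\lambda(\dd x)$, and $\rho$ is nonnegative and nonincreasing in $t$. The same formula yields the covariance identity
\[
\Var F=\int_0^\infty e^{-t}\rho(t)\,\dd t,
\]
since $\int_0^\infty n e^{-nt}\,\dd t=1$. Alternatively one can quote this identity directly from \cite{LastPenrose}.

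\emph{Superconcentration implies chaos.} Since $\rho$ is nonincreasing and nonnegative, for $t\ge\delta_s$ we have
\[
\Var F_s=\int_0^\infty e^{-u}\rho(u)\,\dd u\ \ge\ \int_0^{\delta_s}e^{-u}\rho(u)\,\dd u\ \ge\ \rho(\delta_s)\,\delta_s e^{-\delta_s}\ \ge\ \rho(t)\,\delta_s e^{-\delta_s}.
\]
Combining this with $\Var F_s\le\eps_s\rho(0)$ gives $\rho(t)\le(\eps_se^{\delta_s}/\delta_s)\,\rho(0)$, which is the claimed chaos. The hypothesis $\eps_s/\delta_s\to0$ only ensures that the chaos parameter tends to $0$.

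\emph{Chaos implies superconcentration.} Split the integral at $\delta_s$. On $[0,\delta_s]$ use $\rho(u)\le\rho(0)$ to get a contribution of at most $\delta_s\rho(0)$. On $[\delta_s,\infty)$ use $\rho(u)\le\eps_s\rho(0)$ to get at most $\eps_s e^{-\delta_s}\rho(0)\le\eps_s\rho(0)$. Hence $\Var F_s\le(\eps_s+\delta_s)\rho(0)$.

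The main obstacle is rigorously justifying the spectral formula for $\rho(t)$, that is, its monotonicity and nonnegativity, for a general $F\in L^2_\eta$ whose Malliavin derivative need not be in $L^2$. If $\E\int (D_xF)^2\,\lambda(\dd x)=\infty$, both statements are trivial, so we may assume $DF\in L^2(\P\otimes\lambda)$. Then $DF$ lies in the domain, and the formula follows from the chaos expansion of $D_xF$ together with $P_tI_n=e^{-nt}I_n$ and the isometry. If needed, one can instead avoid chaos by writing $\rho(t)=e^t\|P_{t/2}DF\|^2$ via the semigroup property and self-adjointness, which also gives monotonicity since $e^{t}e^{-2\cdot}$ acts on chaos $n-1$ of $DF$.
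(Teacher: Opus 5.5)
Your proposal is correct and is essentially the paper's proof: the same chaos-expansion identity showing that $\rho(t)=\sum_{n\ge1} n\,n!\,\|f_n\|_n^2e^{-(n-1)t}$ (the paper's $e^{t}E_t$) is nonnegative and nonincreasing, the same bound $\Var(F_s)\ge \delta_s e^{-\delta_s}\rho(\delta_s)$ for the forward direction, and the same splitting of the covariance integral at $\delta_s$ for the converse. The only slip is in your closing aside: self-adjointness and the semigroup property give $\rho(t)=\E\int_\X (P_{t/2}D_xF)^2\,\lambda(\dd x)$ without the extra factor $e^{t}$ (with that factor the $n=1$ term would increase in $t$), but your main argument does not depend on it.
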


Note that since $\delta_s \to 0$ as $s \to \infty$, there exists $C \in (1,\infty)$ such that $e^{\delta_s} \in (1,C]$ for all $s>0$. Thus, from the forward implication above, one also obtains that $\eps_s$-superconcentration for $F$ implies that $F$ is $(C\eps_s/\delta_s, \delta_s)$-chaotic.

As mentioned earlier, chaos for Poisson functionals is often exhibited via a chaotic behaviour of certain associated random sets. A crucial step in showing a similar behaviour in the Poisson setting is the following variance identity. For this result, we require an additional assumption that $DF  \in L^2 (\lambda \otimes \P_{\eta})$, that is,
$$
\E \int_\X (D_x F)^2\,  \lambda(\dd x) <\infty.
$$
\begin{theorem} \label{thm:var}
	If $\eta$ is a Poisson point process on a measurable space $(\mathbb{X},\mathcal{X})$ with $\sigma$-finite intensity measure $\lambda$, and $F\in L^2_\eta$ with $DF \in L^2 (\lambda \otimes \P_\eta)$, then
	\begin{align*}
		\Var(F) &= \int_0^\infty\mathbb{E}\bigg[\sum_{x\in\eta\cap\eta^t}D_x^-F(\eta)D_x^-F(\eta^t)\bigg]\dd t.
	\end{align*}
\end{theorem}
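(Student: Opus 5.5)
The plan is to start from the known covariance identity for Poisson functionals coming from the Ornstein--Uhlenbeck semigroup (see \cite{LastPenrose}, and \cite{Wu2000} for the original form). For $F\in L^2_\eta$ with $DF\in L^2(\lambda\otimes\P_\eta)$,
\begin{align*}
\Var(F) = \E\int_0^\infty e^{-t}\int_\X D_xF(\eta)\,P_tD_xF(\eta)\,\lambda(\dd x)\,\dd t
= \int_0^\infty e^{-t}\,\E\int_\X D_xF(\eta)\,D_xF(\eta^t)\,\lambda(\dd x)\,\dd t .
\end{align*}
The second equality uses the Mehler representation \eqref{eq:Meh}, $P_tG(\eta)=\E[G(\eta^t)\mid\eta]$, together with Fubini. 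Fubini is justified because, by Cauchy--Schwarz and the fact that $\eta^t\overset{d}{=}\eta$, the double integral is bounded by $e^{-t}\,\E\int_\X(D_xF)^2\,\dd\lambda$, which is integrable in $t$. So the whole task is to show that, for each fixed $t$,
\begin{align*}
e^{-t}\,\E\int_\X D_xF(\eta)\,D_xF(\eta^t)\,\lambda(\dd x)=\E\Big[\sum_{x\in\eta\cap\eta^t}D_x^-F(\eta)\,D_x^-F(\eta^t)\Big].
\end{align*}

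For this I would use the Mecke formula for the pair $(\eta,\eta^t)$. Build the coupling as follows. Take a Poisson process $\xi$ on $\X\times[0,1]\times\{0,1\}$, or more concretely split the configuration into three independent Poisson processes: $\eta^{(1)}$ for points kept in both (intensity $e^{-t}\lambda$), $\eta^{(2)}$ for points of $\eta$ that are deleted (intensity $(1-e^{-t})\lambda$), and $\eta'$ for the new points (intensity $(1-e^{-t})\lambda$). Then $\eta=\eta^{(1)}+\eta^{(2)}$, $\eta^t=\eta^{(1)}+\eta'$, and $\eta\cap\eta^t=\eta^{(1)}$; the independence of these three pieces follows from the marking and thinning theorems.

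Next apply the univariate Mecke formula to $\eta^{(1)}$, conditionally on $(\eta^{(2)},\eta')$. With $G(x,\mu)=D^-_xF(\mu+\eta^{(2)})\,D^-_xF(\mu+\eta')$ this gives
\begin{align*}
\E\sum_{x\in\eta^{(1)}}G(x,\eta^{(1)})=e^{-t}\,\E\int_\X G(x,\eta^{(1)}+\delta_x)\,\lambda(\dd x).
\end{align*}
Finally, $D^-_xF(\mu+\delta_x)=D_xF(\mu)$, where $\mu$ is the configuration without $x$. The integrand on the right is therefore $D_xF(\eta)D_xF(\eta^t)$, which is exactly the identity we need. Substituting it into the covariance formula and integrating over $t$ gives the theorem.

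The main obstacle is integrability, which is needed to apply Mecke to a signed functional and to exchange sums, integrals and expectations. I would handle it by applying Mecke first to $|G|$. The resulting bound, $e^{-t}\,\E\int_\X|D_xF(\eta)D_xF(\eta^t)|\,\lambda(\dd x)\le e^{-t}\,\E\int_\X(D_xF)^2\,\dd\lambda<\infty$, shows absolute convergence. This legitimises the signed version and the Fubini step in $t$. There is also a minor point to settle: when $\lambda$ has atoms, ``$x\in\eta\cap\eta^t$'' must be read as summing over the points of $\eta^{(1)}$ counted with multiplicity. The decomposition above makes this convention precise.
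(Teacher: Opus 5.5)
Your proposal is correct and follows essentially the same route as the paper: the covariance (Mehler) identity \eqref{eq:Mehler}, combined with a Mecke-formula computation showing $e^{-t}\int_\X \E[D_xF(\eta)D_xF(\eta^t)]\,\lambda(\dd x)=\E\big[\sum_{x\in\eta\cap\eta^t}D_x^-F(\eta)D_x^-F(\eta^t)\big]$ (the paper's Lemma~\ref{lem:intermed}). The only difference is cosmetic: you apply Mecke to the retained process of intensity $e^{-t}\lambda$, whereas the paper applies it to $\eta$ and obtains $e^{-t}$ as the retention probability of the added point. Your explicit absolute-integrability check and your convention for atoms are welcome additions that the paper leaves implicit.
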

While the above abstract identity doesn't let us identify in general a set that is chaotic in the sense of \eqref{def:Chaos}, it gives us all the necessary information about it. In particular, if $D_x F$ is an indicator random variable, then we can find such a random set.
\begin{corollary} \label{cor:var}
	Let $\eta_s$ be as in \autoref{def:supconc}. Assume $F_s(\eta_s) \in L_{\eta_s}^2, DF_s \in L^2(\lambda_s \otimes \P_{\eta_s})$, and $D_xF_s(\mu) D_xF_s(\mu') \in \{0,1\}$ for all $x \in \X$ and $\mu, \mu' \in \mathbf{N}$.  Then
	\begin{align*}
		\Var(F_s)=\int^\infty_0\mathbb{E}|A_s^0\cap A_s^t|\dd t,
	\end{align*}
	where 
	$$
	A_s^t = \{x\in\eta_s^t: D_x^-F_s(\eta_s^t) \neq 0\}, \quad t \ge 0.
	$$
	Moreover, $F_s$ is $(\eps_s,\delta_s)$-chaotic if and only if  the set $A_s$ is $(\eps_s,\delta_s)$-chaotic in the sense of \eqref{def:Chaos}.
\end{corollary}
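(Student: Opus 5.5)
The plan is to derive the corollary directly from Theorem~\ref{thm:var}, using the multivariate Mecke formula applied to a convenient representation of the pair $(\eta_s,\eta_s^t)$. I will write $\eta=\eta_s$, $F=F_s$ and $\lambda=\lambda_s$ throughout.

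\textbf{Step 1: reduce the integrand to an indicator.} Taking $\mu=\mu'$ in the hypothesis gives $(D_xF(\mu))^2\in\{0,1\}$, so $D_xF(\mu)\in\{-1,0,1\}$ for every $x$ and $\mu$. If $D_xF(\mu)$ and $D_xF(\mu')$ are both nonzero, their product lies in $\{-1,1\}\cap\{0,1\}$, so it equals $1$. Hence
\[
D_xF(\mu)D_xF(\mu')=\mathbf 1\{D_xF(\mu)\neq 0,\ D_xF(\mu')\neq 0\}.
\]
For $x\in\eta\cap\eta^t$ we have $D_x^-F(\eta)=D_xF(\eta-\delta_x)$ and $D_x^-F(\eta^t)=D_xF(\eta^t-\delta_x)$. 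Therefore
\[
D_x^-F(\eta)D_x^-F(\eta^t)=\mathbf 1\{x\in A_s^0\cap A_s^t\}.
\]
Summing over $x\in\eta\cap\eta^t$ gives $|A_s^0\cap A_s^t|$. Plugging this into Theorem~\ref{thm:var} yields the variance identity. (If $\lambda$ has atoms, $A_s^t$ should be read with multiplicity; otherwise the points of $\eta$ are a.s.\ distinct.)

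\textbf{Step 2: identify the two sides of the chaos inequality.} For the right-hand side, the Mecke formula applied to $\eta$ gives
\[
\E|A_s|=\E\sum_{x\in\eta}\mathbf 1\{D_xF(\eta-\delta_x)\neq0\}=\E\int_\X\mathbf 1\{D_xF(\eta)\neq 0\}\,\lambda(\dd x)=\E\int_\X (D_xF(\eta))^2\,\lambda(\dd x).
\]
For the left-hand side, I realise the pair as $\eta=\xi+\zeta$ and $\eta^t=\xi+\zeta'$. Here $\xi$ is the retained thinning, a Poisson process with intensity $e^{-t}\lambda$. The processes $\zeta$ (the deleted points) and $\zeta'$ (the immigrants) are Poisson with intensity $(1-e^{-t})\lambda$. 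All three are independent, by the thinning theorem. Since $\eta\cap\eta^t=\xi$ a.s., the Mecke formula applied to $\xi$ with $\zeta,\zeta'$ frozen gives
\[
\E|A_s^0\cap A_s^t|=e^{-t}\,\E\int_\X \mathbf 1\{D_xF(\eta)\neq0,\ D_xF(\eta^t)\neq0\}\,\lambda(\dd x).
\]
Adding $\delta_x$ to $\xi$ adds it to both $\eta$ and $\eta^t$, which is why the same $x$ appears in both add-one costs. By Step 1 the integrand equals $D_xF(\eta)D_xF(\eta^t)$, so
\[
\E\int_\X D_xF(\eta)D_xF(\eta^t)\,\lambda(\dd x)=e^{t}\,\E|A_s^0\cap A_s^t|.
\]

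\textbf{Step 3: compare the definitions.} I then substitute the two identities of Step 2 into Definition~\ref{definition:chaos} and into \eqref{def:Chaos}. The main point to check carefully is the $e^{\pm t}$ normalisation in Step 2, since it must match the $e^{-t}$ factor in \eqref{def:Chaos} for the two notions of chaos to coincide. If the correct factor turns out to differ, I would compare the conditions through that explicit factor of $e^{t}$ instead. Once the factor is settled, the two conditions are equivalent for every $t\ge\delta_s$, which gives the claimed equivalence.

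The main obstacle is the coupling in Step 2. One must justify that $\xi,\zeta,\zeta'$ are independent Poisson processes and apply the Mecke formula conditionally on $(\zeta,\zeta')$, with nonnegative integrands so that no integrability issue arises.
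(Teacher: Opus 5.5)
Your Steps 1 and 2 are correct and use the same ingredients as the paper's proof. The variance identity is \autoref{thm:var} together with the observation that a product of two nonzero add-one costs equals $1$. The chaos comparison rests on $\E|A_s|=\E\int_\X (D_xF_s(\eta_s))^2\,\lambda_s(\dd x)$ (Mecke) and on $\int_\X\E[D_xF_s(\eta_s)D_xF_s(\eta_s^t)]\,\lambda_s(\dd x)=e^{t}\,\E|A_s^0\cap A_s^t|$, which is \autoref{lem:intermed}. The paper proves that lemma by applying Mecke to $\eta$ and using that the added point survives to time $t$ with conditional probability $e^{-t}$. You instead split $(\eta,\eta^t)$ into independent retained, deleted and immigrant processes and apply Mecke to the retained process of intensity $e^{-t}\lambda$. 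This is an equally valid, and somewhat more transparent, derivation of the same identity.

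The gap is Step 3, which you leave conditional, and it does not close the way you assert. Your own computation fixes the factor: $F_s$ is $(\eps_s,\delta_s)$-chaotic if and only if $e^{t}\,\E|A_s^0\cap A_s^t|\le\eps_s\E|A_s|$ for all $t\ge\delta_s$. But \eqref{def:Chaos} as printed carries $e^{-t}$.

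\begin{itemize}
\item The $e^{t}$ condition implies the printed one, so the forward implication holds.
\item The two are not equivalent with the same $\eps_s$. At $t=\delta_s$ they differ by the factor $e^{2\delta_s}>1$. So whenever the printed condition is tight at $t=\delta_s$ (and $\Var(F_s)>0$), $A_s$ is chaotic in the printed sense while $F_s$ is not $(\eps_s,\delta_s)$-chaotic.
\end{itemize}

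Hence your closing sentence ("once the factor is settled, the two conditions are equivalent") holds only if \eqref{def:Chaos} is read with $e^{t}$ in place of $e^{-t}$. The paper's proof tacitly does exactly this: it derives $e^{t}\E|A_s^0\cap A_s^t|\le\eps_s\E|A_s|$, calls this set chaos, and reverses the argument. It is also the natural normalisation, since even $A_s=\eta_s$ has $\E|A_s^0\cap A_s^t|=e^{-t}\E|A_s|$.

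You need to commit to one of two conclusions.
\begin{itemize}
\item With the $e^{t}$ normalisation, your Steps 1--2 give the equivalence exactly.
\item With the printed $e^{-t}$, you get the forward direction exactly, and the converse only with $\eps_s$ replaced by $e^{2\delta_s}\eps_s$.
\end{itemize}
The second option needs an ingredient you have not used. The map $t\mapsto e^{t}\E|A_s^0\cap A_s^t|=\sum_{n\ge1}n\,n!\,e^{-(n-1)t}\|f_n\|_n^2$ is nonincreasing (combine \eqref{eq:Et} with \autoref{lem:intermed}). Hence its supremum over $t\ge\delta_s$ is attained at $t=\delta_s$. A pointwise comparison of the two conditions alone only gives $e^{2t}\eps_s$, which is not uniform in $t\ge\delta_s$.
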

\begin{remark}\label{rem:GenChaosSupEq}
    Note that the condition $D_xF_s(\mu) D_xF_s(\mu') \in \{0,1\}$ for all $x \in \X$ and $\mu, \mu' \in \mathbf{N}$ in particular implies that $DF \in \{0,\pm 1\}$. Also note that by \autoref{thm:eqv}, \autoref{cor:var} yields that when $D_xF(\mu) D_xF(\mu') \in \{0,1\}$, then $\eps_s$-superconcentration of $F_s$ implies that the set $A_s\equiv A_s^0$ is $(\eps_s,\delta_s)$-chaotic in the sense of \eqref{def:Chaos} for any $\delta_s$, $s>0$ satisfying $\delta_s \to 0$ and $\eps_s/\delta_s \to 0$ as $s\to \infty$. Conversely, it also yields that if $A_s$ is $(\eps_s,\delta_s)$-chaotic, then $F_s$ is $(\eps_s+\delta_s)$-superconcentrated.
Of course, the results also hold true if $D_x F(\mu) D_xF(\mu') \in \{0,-1\}$. More generally, if either $D_x F(\mu) D_xF(\mu') \in \{0\} \cup [a,b]$, or $D_x F(\mu) D_xF(\mu') \in \{0\} \cup [-b,-a]$  for some $0<a\le b<\infty$, then one obtains
$$
	\Var(F)\asymp \int^\infty_0\mathbb{E}|A^0\cap A^t|\dd t,
$$
Also, the equivalence between the two definitions of functional- and set- chaos as in \autoref{cor:var} holds in this case.
\end{remark}

A weaker condition on the difference operator, namely only requiring that $D_x F$ is uniformly bounded away from zero yields just the forward implication in the second assertion of \autoref{cor:var} above.
\begin{corollary}\label{cor:oneway}
	For $\eta_s$ as in \autoref{def:supconc}, assume  $F_s \in L_{\eta_s}^2$ and that there exists a constant $c>0$ such that
	$
	D_x F_s(\mu)D_x F_s(\mu')\in \{0\} \cup [c,\infty)
	$ for all $x \in \X$ and $\mu, \mu' \in \mathbf{N}$
	Then
	\begin{align*}
		\Var(F_s) \ge c^2 \int^\infty_0\mathbb{E}|A_s^0\cap A_s^t|\dd t.
	\end{align*}
	Moreover, if $F_s$ is $(\eps_s,\delta_s)$-chaotic, then the set $A_s\equiv A_s^0$ is $(c^{-2}\eps_s,\delta_s)$-chaotic in the sense of \eqref{def:Chaos}.
\end{corollary}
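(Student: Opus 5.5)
The plan is to obtain both assertions from the variance identity of \autoref{thm:var}, using the Mecke formula to move between sums over $\eta_s\cap\eta_s^t$ and integrals against $\lambda_s$. Throughout I assume $DF_s\in L^2(\lambda_s\otimes\P_{\eta_s})$, which \autoref{thm:var} needs. This is harmless for the second assertion, since \autoref{definition:chaos} is only meaningful when $\E\int_\X(D_xF_s)^2\,\lambda_s(\dd x)<\infty$. For the first assertion without this assumption, I would first try truncating $F_s$, or restricting $\eta_s$ to sets of finite measure, and then pass to the limit. I expect that limit step to be routine but fiddly.

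\textbf{Step 1: pointwise lower bound.} For $x\in\eta_s\cap\eta_s^t$ we have $D_x^-F_s(\eta_s)=D_xF_s(\eta_s-\delta_x)$ and $D_x^-F_s(\eta_s^t)=D_xF_s(\eta_s^t-\delta_x)$. So the summand in \autoref{thm:var} has the form $D_xF_s(\mu)D_xF_s(\mu')$, and by hypothesis it lies in $\{0\}\cup[c,\infty)$. It is nonzero exactly when both factors are nonzero, that is, exactly when $x\in A_s^0\cap A_s^t$. Hence the summand is bounded below by (constant)$\cdot\mathbf 1\{x\in A_s^0\cap A_s^t\}$, and integrating in $t$ gives the variance bound.

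As written, the hypothesis bounds the products below by $c$, which yields the constant $c$. Taking $\mu=\mu'$ only gives $|D_xF_s|\ge\sqrt c$, so it does not improve this. The stated constant $c^2$ arises if the hypothesis is read as $|D_xF_s|\ge c$ whenever $D_xF_s\neq0$. It also follows from the constant $c$ when $c\le1$. I would state which reading is used and otherwise run the same argument.

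\textbf{Step 2: chaos transfer.} By the Mecke formula, together with the fact that a point added to $\eta_s$ survives into $\eta_s^t$ with probability $e^{-t}$, independently of everything else,
\begin{equation*}
\E\sum_{x\in\eta_s\cap\eta_s^t}D_x^-F_s(\eta_s)D_x^-F_s(\eta_s^t)=e^{-t}\,\E\int_\X D_xF_s(\eta_s)D_xF_s(\eta_s^t)\,\lambda_s(\dd x).
\end{equation*}
Combining this with Step 1 and with $(\eps_s,\delta_s)$-chaos of $F_s$ gives, for $t\ge\delta_s$,
\begin{equation*}
e^{-t}\,\E|A_s^0\cap A_s^t|\le c^{-2}e^{-t}\,\eps_s\,\E\int_\X(D_xF_s)^2\,\lambda_s(\dd x).
\end{equation*}

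\textbf{Main obstacle.} It remains to compare $\E\int_\X(D_xF_s)^2\,\lambda_s(\dd x)$ with $\E|A_s|=\E\int_\X\mathbf 1\{D_xF_s(\eta_s)\neq0\}\,\lambda_s(\dd x)$, where the last equality is again by Mecke. The hypothesis gives only the lower bound $(D_xF_s)^2\ge c$ on $\{D_xF_s\ne0\}$, not an upper bound. So the comparison holds, with constant one, if $|D_xF_s|\le1$. For unbounded $D_xF_s$ I would look for an argument specific to this setting, or else record the set-chaos conclusion with $\E\int_\X(D_xF_s)^2\,\lambda_s(\dd x)$ in place of $\E|A_s|$. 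This is the step I would check most carefully.
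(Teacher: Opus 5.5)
Your route is the paper's own. It combines \autoref{lem:intermed} (the Mecke formula plus the survival probability $e^{-t}$ of an added point), a pointwise lower bound on the summands in \autoref{thm:var}, the definition of chaos, and the Mecke formula once more to identify $\E|A_s|$. Both places you flag are genuine gaps, and the paper's proof passes over them. First, it writes the factor $c^2$, although the hypothesis (products in $\{0\}\cup[c,\infty)$) yields only $c$. Second, its final step goes from $c^2 e^{t}\E|A_s^0\cap A_s^t|\le \eps_s\E\int_\X(D_xF_s)^2\,\lambda_s(\dd x)$ to $e^{t}\E|A_s^0\cap A_s^t|\le c^{-2}\eps_s\E|A_s|$. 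This tacitly reuses the identity $\E\int_\X(D_xF_s)^2\,\lambda_s(\dd x)=\E|A_s|$ from the proof of \autoref{cor:var}. That identity rested on $(D_xF_s)^2=\mathbf{1}\{D_xF_s\neq0\}$, which is not available under the present hypothesis.

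Neither gap can be closed, so there is no setting-specific argument to find. Both counterexamples below use a diffuse intensity.

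\emph{The constant.} Take $F=\sqrt{c}\,\eta(B)$ with $0<\lambda(B)<\infty$. It satisfies the hypothesis, has $A^t=\eta^t\cap B$, and gives $\Var F=c\lambda(B)=c\int_0^\infty\E|A^0\cap A^t|\,\dd t$. This contradicts the first assertion whenever $c>1$.

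\emph{The chaos transfer.} Take $\lambda_s(B_s)=L_s\to\infty$ and $F_s=\eta_s(B_s)+e^{L_s}\mathbf{1}\{\eta_s(B_s)\ge 1\}$. Then $D_xF_s=\mathbf{1}_{B_s}(x)\bigl(1+e^{L_s}\mathbf{1}\{\eta_s(B_s)=0\}\bigr)$, so the hypothesis holds with $c=1$ and $A_s^t=\eta_s^t\cap B_s$. Hence $\E|A_s^0\cap A_s^t|=e^{-t}\E|A_s|$, and $A_s$ is not chaotic for any $\delta_s\to0$. On the other hand, the ratio of $\int_\X\E[D_xF_s(\eta_s)D_xF_s(\eta_s^t)]\,\lambda_s(\dd x)$ to $\E\int_\X(D_xF_s)^2\,\lambda_s(\dd x)$ equals $(3+e^{L_s e^{-t}})/(3+e^{L_s})$. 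This tends to $0$ uniformly in $t\ge\delta_s$ for any $\delta_s\to0$ with $L_s\delta_s\to\infty$, so $F_s$ is chaotic.

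So the conclusions you can actually prove are the right ones. The constant is $c$, or $c^2$ if the hypothesis is read as $|D_xF_s|\ge c$ on $\{D_xF_s\neq0\}$. Set chaos should be measured against $\E\int_\X(D_xF_s)^2\,\lambda_s(\dd x)$. Alternatively, add the assumption $|D_xF_s|\le 1$. This forces $c\le1$, and then the corollary holds exactly as stated.

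Your standing assumption $F_s\in\operatorname{dom}D$ is also what the paper uses implicitly, through \eqref{eq:Mehler}. For the second assertion it is necessary rather than merely harmless. When $\E\int_\X(D_xF_s)^2\,\lambda_s(\dd x)=\infty$, the chaos hypothesis holds vacuously while the set-chaos conclusion can fail.
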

Again, as in \autoref{rem:GenChaosSupEq}, the result also holds if uniformly, $D_x F(\mu) D_x F(\mu') \in \{0\} \cup (-\infty,-c]$ for some $c>0$. Also, by \autoref{thm:eqv}, in the setting of \autoref{cor:oneway}, superconcentration of $F_s$ implies that the set $A_s$ is chaotic, but the reverse implication cannot be argued if $DF$ is unbounded.

\subsection{General Variance Estimations} \label{Sec:VarEst}
In the previous section, we considered variance estimation, when $D_xF(\mu) D_xF(\mu')$ is either nonpositive or nonnegative. But in general, one typically has the situation that the product is real-valued. Throughout this section, $\eta$ is a Poisson point process on a measurable space $(\mathbb{X},\mathcal{X})$ with $\sigma$-finite intensity measure $\lambda$. Assume that $F \in L^2_\eta$ and $DF \in L^2 (\lambda \otimes \P_{\eta})$. Then one has the following variance representation by \cite[Theorem 20.2]{LastPenrose} (see also \eqref{eq:Mehler}):
\begin{align*}
	\operatorname{Var}(F(\eta)) &= \int^\infty_0 e^{-t} \int_\X \mathbb{E}[D_xF(\eta) D_x F(\eta^t)]\, \lambda(\dd x)\dd t.
\end{align*}
Consider now a decomposition $D_xF(\eta) = g_1(x,\eta) - g_2(x,\eta)$, with $g_1, g_2 \ge 0$ so that their supports are disjoint. Note that such a decomposition always exists, for instance, one can simply take $g_1= (DF)_+ = DF \mathds{1}\{DF \ge 0\}$ and $g_2 = (DF)_- = - DF \mathds{1}\{DF \le 0\}$. Then by the exchangeability of $\eta$ and $\eta^t$ for $t\geq 0$, one has
\begin{align}\label{eq:vardecomp}
	\Var(F) = &\int^\infty_0 e^{-t} \int_\X \mathbb{E}[g_1(x,\eta) g_1(x,\eta^t)]\,\lambda(\dd x)\dd t \nonumber\\
	&+ \int^\infty_0 e^{-t} \int_\X \mathbb{E}[g_2(x,\eta) g_2(x,\eta^t)]\,\lambda(\dd x)\dd t \nonumber\\
	& \quad-2 \int^\infty_0 e^{-t} \int_\X \mathbb{E}[g_1(x,\eta) g_2(x,\eta^t)]\,\lambda(\dd x)\dd t =: T_1 + T_2 - 2 T_3.
\end{align}

One approach to estimate the variance  of $F$ is then by individually estimating $T_1, T_2$ and $T_3$. Because of the non-negativity of $g_1, g_2$, this allows one to separate the positive and negative contributions to the variance and use the results from Section \ref{sec:chaosresults}, paving the way for a tight variance bound. Moreover, choosing an appropriate decomposition, that is the functions $g_1,g_2$, can greatly assist in estimating the terms $T_1, T_2$ and $T_3$. For instance, one typical example is when $F(\eta) = \sum_{y \in \eta} f(y,\eta)$ is a sum of \textit{score functions}. If one has $f \le 0, Df \ge 0$ or $f \ge 0, Df \le 0$ (see Section \ref{sec:isovert} for an example), then a natural way to decompose $DF$ is 
\begin{equation}\label{eq:Dxdecomp}
    D_x F(\eta) =  \sum_{y \in \eta \cup \{x\}} f(y, \eta+\delta_x) - \sum_{y \in \eta} f(y, \eta) = f(x, \eta+\delta_x) - \Big(\sum_{y \in \eta} - D_x f(y, \eta) \Big).
\end{equation}

In the following result, we first provide a general upper bound for terms such as $T_1$ and $T_2$. Below, $\|\cdot\|_{L_\eta^p}$, $p >0$ denotes the $L^p$-norm for Poisson functionals.

\begin{theorem}\label{thm:L1-L2}
	For a measurable $g: \X \times \mathbf{N} \to [0,\infty)$ with $g(x,\cdot) \in L_\eta^2$ for $\lambda$-almost every $x \in \X$, 
	$$
	\int^\infty_0 e^{-t} \int_\X \mathbb{E}[g(x,\eta) g(x,\eta^t)]\, \lambda(\dd x)\dd t \le \int_\X \|g(x,\eta)\|_{L^2_\eta}^{2}\, \lambda(\dd x).
	$$
	
	If in addition $D_y g(x,\eta) \le 0$ for $\lambda$-almost every $x,y \in \X$, then
	\begin{align*}
		\int^\infty_0 e^{-t} \int_\X \mathbb{E}[g(x,\eta) g(x,\eta^t)]\,\lambda(\dd x)\dd t &\le  2 \int_\X \frac{\|g(x,\eta)\|_{L_\eta^2}^{2}}{1+(1/2) \log (\|g(x,\eta)\|_{L_\eta^2}/\|g(x,\eta)\|_{L_\eta^1})}  \lambda(\dd x).
	\end{align*}
\end{theorem}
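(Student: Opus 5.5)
The plan is to rewrite the time-$t$ correlation through the Ornstein--Uhlenbeck semigroup, bound it separately for each fixed $x$, and then integrate over $x$ against $\lambda$. For fixed $x$ write $G:=g(x,\cdot)\in L^2_\eta$. By \eqref{eq:Meh}, $\E[G(\eta)G(\eta^t)]=\E[G(\eta)P_tG(\eta)]$.

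\emph{First bound.} Since $\eta^t$ has the same distribution as $\eta$, Cauchy--Schwarz gives
\[
\E[G(\eta)G(\eta^t)]\le \|G\|_{L^2_\eta}\,\|G(\eta^t)\|_{L^2}=\|G\|_{L^2_\eta}^2 .
\]
Integrating against $e^{-t}\,\dd t$ over $(0,\infty)$ gives the factor $1$. Integrating over $x$ (Tonelli, as everything is nonnegative) then yields the first inequality.

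\emph{Second bound: reduction to a hypercontractive estimate.} I use that $(P_t)$ is a symmetric Markov semigroup on $L^2_\eta$, reversible because $(\eta,\eta^t)$ is exchangeable. This gives
\[
\E[GP_tG]=\E[(P_{t/2}G)^2]=\|P_{t/2}G\|_{L^2_\eta}^2 .
\]
The key input is a restricted hypercontractivity estimate on Poisson space for nonnegative functionals with $D_yG\le 0$, in the spirit of \cite{NourdinPeccatiYang}. The form I would aim to invoke is
\[
\|P_{s}G\|_{L^2_\eta}\le \|G\|_{L^{p(s)}_\eta},\qquad p(s)=1+e^{-2s}.
\]
With $s=t/2$ this gives $\E[GP_tG]\le \|G\|_{L^{p}_\eta}^2$, where $p=1+e^{-t}\in(1,2]$.

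I expect this to be the main obstacle. Full hypercontractivity fails on Poisson space, so I must check that the cited monotone or restricted version applies exactly to nonnegative decreasing functionals and with this exponent. If the exponent available is worse, say $p(s)=1+e^{-cs}$, the same computation below still goes through, only with different constants.

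\emph{Interpolation and integration in $t$.} By Hölder (log-convexity of $L^p$ norms), $\|G\|_p\le \|G\|_1^{\theta}\|G\|_2^{1-\theta}$, where $1/p=\theta+(1-\theta)/2$. Solving gives
\[
\theta=\frac{2}{p}-1=\frac{1-e^{-t}}{1+e^{-t}}\ \ge\ \frac{1-e^{-t}}{2}.
\]
Set $r:=\log(\|G\|_2/\|G\|_1)\ge 0$. Then
\[
\E[GP_tG]\le \|G\|_2^2\,e^{-2r\theta}\le \|G\|_2^2\, e^{-r(1-e^{-t})}.
\]
Substituting $u=1-e^{-t}$, so that $e^{-t}\,\dd t=\dd u$, the time integral becomes
\[
\int_0^\infty e^{-t}\,\E[GP_tG]\,\dd t\ \le\ \|G\|_2^2\int_0^1 e^{-ru}\,\dd u=\|G\|_2^2\,\frac{1-e^{-r}}{r}.
\]

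It remains to check the elementary inequality $(1-e^{-r})/r\le 2/(2+r)=1/(1+r/2)$ for all $r\ge0$. For small $r$, a Taylor expansion gives $(1-e^{-r})(2+r)=2r-r^3/6+O(r^4)\le 2r$. For $r\ge 2$ it is immediate, since $(1-e^{-r})(2+r)\le 2+r\le 2r$. For the remaining range I would argue directly by calculus. This even gives the constant $1$ in place of $2$; the stated factor $2$ leaves slack, for instance to absorb a weaker hypercontractivity exponent.

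Finally, integrating over $x$ with respect to $\lambda$ gives the claimed bound. The points with $\|g(x,\eta)\|_{L^1_\eta}=0$ contribute nothing, since then $G=0$ almost surely.
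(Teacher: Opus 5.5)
Your route works and differs from the paper's, but the hypercontractive estimate you propose to invoke is not the one available, and this affects your remark about the constant. The exponent $1+e^{-2s}$ is the Gaussian rate. On Poisson space, for $G\ge 0$ with $DG\le 0$, what you can cite is \eqref{eq:resthyp}; taking $p=1+e^{-s}$ there gives $\|P_sG\|_{L^2_\eta}\le\|G\|_{L^{1+e^{-s}}_\eta}$. At $s=t/2$ this yields $p=1+e^{-t/2}$, not $1+e^{-t}$.

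The argument still closes, with exactly the stated factor $2$ (not $1$). Put $w=e^{-t/2}$. Then $\theta=(1-w)/(1+w)\ge(1-w)/2$, hence $\E[GP_tG]\le\|G\|_{L^2_\eta}^2e^{-r(1-w)}$. Since $e^{-t}\,\dd t=-2w\,\dd w$,
\[
\int_0^\infty e^{-t}\,\E[GP_tG]\,\dd t\le 2\|G\|_{L^2_\eta}^2\int_0^1 w\,e^{-r(1-w)}\,\dd w\le 2\|G\|_{L^2_\eta}^2\,\frac{1-e^{-r}}{r}\le\frac{2\|G\|_{L^2_\eta}^2}{1+r/2}.
\]
For a rate $1+e^{-cs}$, this computation gives the constant $2/c$. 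So your hedge is correct exactly because the available rate has $c=1$. The last elementary inequality is cleanest via $h(r)=2r-(2+r)(1-e^{-r})$, which satisfies $h(0)=0$ and $h'(r)=1-(1+r)e^{-r}\ge 0$; this replaces your three-range sketch.

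The paper never splits $P_t$. It applies H\"older with the conjugate pair $1+e^{-t}$, $1+e^{t}$ and the forward form $\|P_tG\|_{L^{1+e^t}_\eta}\le\|G\|_{L^2_\eta}$ (i.e.\ \eqref{eq:resthyp} with $p=2$). This gives $\E[GP_tG]\le\|G\|_{L^{1+e^{-t}}_\eta}\|G\|_{L^2_\eta}$. It then substitutes $v=1+e^{-t}$, interpolates between $L^1$ and $L^2$, and finishes with $2\int_0^1 b^u\,\dd u$ and the same elementary inequality.

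Your symmetric splitting $\E[GP_tG]=\|P_{t/2}G\|_{L^2_\eta}^2$ additionally uses self-adjointness and the semigroup property, both immediate from \eqref{eq:Ptexp}. In exchange it gives, for each $t$, the bound $\|G\|^2_{L^{1+e^{-t/2}}_\eta}$. By H\"older interpolation and monotonicity of $L^p$-norms, this is never larger than the paper's $\|G\|_{L^{1+e^{-t}}_\eta}\|G\|_{L^2_\eta}$. After the crude final estimates, however, both routes land on the same constant $2$.

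For the first assertion, your Cauchy--Schwarz step using that $\eta^t$ has the law of $\eta$ is equivalent to the paper's use of $L^2$-contractivity of $P_t$.
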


The first assertion can be interpreted as the Poincar\'{e} inequality while the second assertion is a version of Talagrand's $L^1 - L^2$ inequality \cite{Cordero-ErausquinLedoux,Talagrand} on the Poisson space in the same spirit of \cite[Theorem 1.6]{NourdinPeccatiYang}, except that we don't require $g$ to be obtained via a difference operation, and hence can apply it to any decomposition as above. 

Similar to \autoref{cor:var}, one typical case we can consider is when $F$ is such that $ |DF| \in \{0\} \cup [a,b]$ for $0<a\le b<\infty$ almost surely. Denote the events
\begin{equation}\label{eq:AxBx}
	A_x:= \{(x,\eta): D_x F(\eta)>0\}\quad \text{and} \quad B_x:= \{(x,\eta): D_x F(\eta)<0\}.
\end{equation}
Then $D_xF(\eta) = (D_xF)_+ - (D_x F)_- =: g_1(x,\eta) - g_2(x,\eta)$, where $g_1 \asymp \mathds{1}(A_x)$ and $g_2 \asymp \mathds{1}(B_x)$ have disjoint supports. With this decomposition of $DF$, the terms $T_1, T_2, T_3$ in \eqref{eq:vardecomp} can be expressed as
\begin{align*}
&T_1  \asymp \int^\infty_0 \mathbb{E} |A^0\cap A^t|\dd t, \qquad T_2  \asymp \int^\infty_0 \mathbb{E} |B^0\cap B^t|\dd t\\
& \text{and} \quad T_3  \asymp \int^\infty_0 \mathbb{E} |A^0\cap B^t|\dd t = \int^\infty_0 \mathbb{E} |B^0\cap A^t|\dd t,
\end{align*}
where $A^t$ (respectively $B^t$), for $t \ge 0$, denotes the set of points $x \in\eta^t$ for which the event $A_x$ (respectively $B_x$) occurs, that is, 
\begin{equation}\label{eq:At}
	A^t = \{x \in \eta^t : D_x^- F(\eta^t)>0\} \quad \text{and} \quad B^t = \{x \in \eta^t : D_x^- F(\eta^t)<0\}.
\end{equation} 
In general, one can obtain such representations of the terms $T_1, T_2, T_3$ for any decomposition $D_x F = g_1 - g_2$ with $g_1, g_2$ supported on disjoint sets. 
In this setting, where one considers a function $g$ which is an indicator function up to a multiplicative constant bounded away from zero and infinity, \autoref{thm:L1-L2} yields the following corollary.
\begin{corollary}\label{cor:bddD1}
	Let $g(x,\eta) = \mathds{1}((x,\eta+\delta_x) \text{ satisfy $P$})$ for some property $P$. Further, let $P^t$, $t \ge 0$ denote the set of points $x \in\eta^t$ such that $(x,\eta^t)$ satisfy $P$. Then
	$$
	\int^\infty_0 \mathbb{E} |P^0\cap P^t|\dd t \le \int_\X \P((x,\eta+\delta_x) \text{ satisfy $P$}) \lambda(\dd x).
	$$
	If in addition $D_y g(x,\eta) \le 0$ for $\lambda$-almost every $x,y \in \X$, then
	\begin{align*}
		\int^\infty_0 \mathbb{E} |P^0\cap P^t|\dd t &\le 2 \int_\X \frac{\P((x,\eta+\delta_x) \text{ satisfy $P$})}{1- (1/4) \log (\P((x,\eta+\delta_x) \text{ satisfy $P$}))}  \lambda(\dd x).
	\end{align*}
\end{corollary}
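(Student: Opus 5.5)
The plan is to apply \autoref{thm:L1-L2} to the indicator $g(x,\eta)=\mathds{1}((x,\eta+\delta_x)\text{ satisfy }P)$. Two things are needed: rewrite the left-hand side of \autoref{thm:L1-L2} as $\int_0^\infty \E|P^0\cap P^t|\,\dd t$, and evaluate the $L^p$ norms of an indicator.

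\textbf{Step 1: the left-hand side.} Fix $t$. Since $g\in\{0,1\}$, we have $g(x,\eta)g(x,\eta^t)=\mathds{1}(x\in P(\eta+\delta_x))\,\mathds{1}(x\in P(\eta^t+\delta_x))$. To turn the $\lambda(\dd x)$-integral into a sum over $x\in\eta\cap\eta^t$, I will use a Mecke-type formula for the pair $(\eta,\eta^t)$. By the thinning--superposition construction in \eqref{eq:Meh}, $\eta^t=\eta_{e^{-t}}+\eta'_{1-e^{-t}}$, so $\eta\cap\eta^t$ is a Poisson process of intensity $e^{-t}\lambda$. More precisely, write $\eta=\xi+\zeta$ and $\eta^t=\xi+\zeta'$, where $\xi,\zeta,\zeta'$ are independent Poisson processes with intensities $e^{-t}\lambda$, $(1-e^{-t})\lambda$ and $(1-e^{-t})\lambda$. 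The Mecke formula applied to $\xi$ (given $\zeta,\zeta'$) gives
\begin{equation*}
\E\sum_{x\in\xi}h(x,\eta,\eta^t)=e^{-t}\int_\X \E\, h(x,\eta+\delta_x,\eta^t+\delta_x)\,\lambda(\dd x).
\end{equation*}
Take $h(x,\mu,\mu')=\mathds{1}(x\in P(\mu))\mathds{1}(x\in P(\mu'))$. Then $e^{-t}\int_\X\E[g(x,\eta)g(x,\eta^t)]\,\lambda(\dd x)=\E|P^0\cap P^t|$, where points of $P^0\cap P^t$ are understood as common points of $\eta$ and $\eta^t$. This is exactly the mechanism behind \autoref{thm:var} and \autoref{cor:var}. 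Integrating over $t$ identifies the left-hand side of \autoref{thm:L1-L2} with $\int_0^\infty\E|P^0\cap P^t|\,\dd t$.

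\textbf{Step 2: the norms.} Put $p(x)=\P((x,\eta+\delta_x)\text{ satisfy }P)$. For an indicator, $\|g(x,\eta)\|_{L^2_\eta}^2=p(x)$ and $\|g(x,\eta)\|_{L^1_\eta}=p(x)$, so $\|g\|_{L^2}/\|g\|_{L^1}=p^{-1/2}$. The first assertion of \autoref{thm:L1-L2} gives the first bound directly. For the second, the monotonicity hypothesis $D_yg\le0$ is exactly what the second assertion of \autoref{thm:L1-L2} requires. The denominator becomes $1+\tfrac12\log p^{-1/2}=1-\tfrac14\log p$, which yields the stated bound. Points with $p(x)=0$ contribute zero to both sides, so the ratio causes no difficulty there.

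\textbf{Main obstacle.} The only delicate step is Step 1: justifying the Mecke identity for the coupled pair $(\eta,\eta^t)$ and checking that "$x\in\eta^t$ satisfying $P$ relative to $\eta^t$" matches $g(x,\eta^t)$ after adding $\delta_x$ to both configurations. The independent decomposition $\xi,\zeta,\zeta'$ handles this cleanly, and the required integrability holds trivially since $g$ is bounded.
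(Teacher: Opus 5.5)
Your proposal is correct and follows the paper's route. The paper applies \autoref{thm:L1-L2} to the indicator $g$, identifies $e^{-t}\int_\X \E[g(x,\eta)g(x,\eta^t)]\,\lambda(\dd x)=\E|P^0\cap P^t|$ by a Mecke argument as in \autoref{lem:intermed}, and evaluates $\|g(x,\eta)\|_{L^2_\eta}^2=\|g(x,\eta)\|_{L^1_\eta}=p(x)$; your decomposition $\eta=\xi+\zeta$, $\eta^t=\xi+\zeta'$, with Mecke applied to the retained points $\xi$, is just a cleaner packaging of the same conditional-retention step.
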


\medskip

Since we want a tight estimate of the variance of $F$, we also need to find appropriate lower bounds for the terms $T_1$ and $T_2$. To this end, the following result provides a general lower bound similar to \cite[Theorem 1.1]{SchulteTrapp}.

\begin{theorem}\label{thm:Varlb}
	For a measurable $g: \X \times \mathbf{N} \to [0,\infty)$ with $g(x,\cdot) \in L_\eta^2$ for $\lambda$-almost every $x \in \X$, assume that
	$$
	\mathbb{E} \left[\int_{\X^2} \left(D_y g(x,\eta) \right)^2 \lambda^2(\dd x,\dd y)\right] \le \alpha \mathbb{E} \left[\int_\X g(x,\eta)^2 \lambda(\dd x)\right] <\infty
	$$
	for some $\alpha > 0$. Then
	$$
	\int^\infty_0 e^{-t} \int_\X\mathbb{E}[g(x,\eta) g(x,\eta^t)]\,\lambda(\dd x)\dd t \ge \frac{1}{\alpha+1}\mathbb{E} \left[\int_\X g(x,\eta)^2 \,\lambda(\dd x)\right].
	$$
\end{theorem}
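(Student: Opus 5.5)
The plan is to use the chaos (Wiener–Itô) expansion of $g(x,\cdot)$ for each fixed $x$, as in the proof of \cite[Theorem 1.1]{SchulteTrapp}. For $\lambda$-a.e.\ $x$, write $g(x,\eta)=\sum_{n\ge 0} I_n(h_n^x)$ with kernels $h_n^x = \frac1{n!}\E[D^n g(x,\eta)]$. Mehler's formula \eqref{eq:Meh} gives $\E[g(x,\eta)g(x,\eta^t)] = \E[g(x,\eta)P_t g(x,\eta)] = \sum_{n\ge0} e^{-nt} n!\|h_n^x\|^2$. Set $a_n := \int_\X n!\|h_n^x\|^2\lambda(\dd x)\ge 0$; these are nonnegative, so Tonelli applies throughout. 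Integrating in $t$ then yields
\begin{align*}
\int_0^\infty e^{-t}\int_\X \E[g(x,\eta)g(x,\eta^t)]\,\lambda(\dd x)\,\dd t = \sum_{n\ge 0}\frac{a_n}{n+1}.
\end{align*}

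Next I express the two quantities in the hypothesis through the same $a_n$. By the isometry, $\E\int_\X g(x,\eta)^2\lambda(\dd x)=\sum_n a_n$. For each fixed $x$, the standard identity $\E\int_\X (D_y G)^2\lambda(\dd y)=\sum_n n\cdot n!\|h_n\|^2$, valid for $G\in L^2_\eta$ (with both sides possibly infinite), gives $\E\int_{\X^2}(D_y g(x,\eta))^2\lambda^2(\dd x,\dd y)=\sum_n n a_n$. The hypothesis therefore reads $\sum_n n a_n\le \alpha\sum_n a_n<\infty$.

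It then remains to show $\sum_n \frac{a_n}{n+1}\ge \frac{1}{\alpha+1}\sum_n a_n$. Let $S=\sum_n a_n$ and consider the probability weights $p_n=a_n/S$; if $S=0$ the claim is trivial. Since $u\mapsto 1/(1+u)$ is convex, Jensen's inequality gives
\begin{align*}
\sum_n p_n\frac{1}{n+1}\ge \frac{1}{1+\sum_n n p_n}\ge\frac{1}{1+\alpha},
\end{align*}
and multiplying by $S$ finishes the proof.

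The main obstacle is the identity $\E\int (DG)^2\,\dd\lambda=\sum_n n\,n!\|h_n\|^2$ for general $G\in L^2_\eta$ without assuming a priori that $DG\in L^2$. This is standard (see \cite{LastPenrose}): one uses $D_yI_n(h)=nI_{n-1}(h(y,\cdot))$ and the fact that the chaos expansion of $D_yG$ has kernels $(n+1)h_{n+1}(y,\cdot)$. I would cite it as stated and handle measurability in $x$ of the kernels by defining them via $\E[D^n g(x,\eta)]$, which is jointly measurable. As an alternative route that avoids the chaos expansion, one could integrate the covariance identity along the semigroup, but the spectral argument above is cleaner.
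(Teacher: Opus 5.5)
Your proposal is correct and is essentially the paper's proof. Both expand $g(x,\cdot)$ in Wiener--It\^o chaoses, rewrite the three quantities as $\sum_n a_n$, $\sum_n n a_n$ and $\sum_n a_n/(n+1)$, and conclude by convexity of $u\mapsto 1/(1+u)$; the paper writes this last step as the tangent-line inequality $(1+\alpha)^2/(n+1)\ge 2\alpha-n+1$ (equivalently $(n-\alpha)^2\ge 0$) summed against $a_n$, whereas you apply Jensen's inequality at the mean and then use monotonicity.
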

Specialising to the case in  \autoref{cor:bddD1} with $g$ as an indicator, we immediately obtain the following corollary.
\begin{corollary}\label{cor:bddD2}
	Let $g(x,\eta) = \mathds{1}((x,\eta+\delta_x) \text{ satisfy $P$})$ for some property $P$. Assume that
	$$
	\mathbb{E} \left[\int_{\X^2} D_y g(x,\eta)^2 \lambda^2(\dd x,\dd y)\right] \le \alpha \int_\X \P((x,\eta+\delta_x) \text{ satisfy $P$}) \, \lambda(\dd x)
	$$
	for some $\alpha > 0$. Then for $P^t$, $t \ge 0$ as in \autoref{cor:bddD1},
	$$
	\int^\infty_0 \mathbb{E} |P^0\cap P^t|\dd t \ge \frac{1}{\alpha+1} \mathbb{E} |P^0|.
	$$
\end{corollary}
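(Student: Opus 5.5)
Corollary~\ref{cor:bddD2} follows by applying Theorem~\ref{thm:Varlb} to $g(x,\eta)=\mathds{1}((x,\eta+\delta_x)\text{ satisfy }P)$ and then rewriting both sides in terms of the random sets $P^t$. Since $g\in\{0,1\}$, we have $g(x,\eta)^2=g(x,\eta)$, so
\[
\mathbb{E}\int_\X g(x,\eta)^2\,\lambda(\dd x)=\int_\X \P((x,\eta+\delta_x)\text{ satisfy }P)\,\lambda(\dd x).
\]
Hence the stated assumption is exactly the hypothesis of Theorem~\ref{thm:Varlb}. Finiteness of the right-hand side is implicit; if it is infinite, the claimed lower bound is an inequality between quantities we handle separately, and the identity below shows both sides equal $\infty$. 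Theorem~\ref{thm:Varlb} then gives
\[
\int_0^\infty e^{-t}\int_\X \mathbb{E}[g(x,\eta)g(x,\eta^t)]\,\lambda(\dd x)\,\dd t\ \ge\ \frac{1}{\alpha+1}\int_\X \P((x,\eta+\delta_x)\text{ satisfy }P)\,\lambda(\dd x).
\]

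The right-hand side equals $\frac{1}{\alpha+1}\mathbb{E}|P^0|$ by the Mecke formula:
\[
\mathbb{E}|P^0|=\mathbb{E}\sum_{x\in\eta}\mathds{1}((x,\eta)\text{ satisfy }P)=\int_\X \P((x,\eta+\delta_x)\text{ satisfy }P)\,\lambda(\dd x).
\]

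The main step is to show that the left-hand side equals $\int_0^\infty \mathbb{E}|P^0\cap P^t|\,\dd t$. This is the same identity used in Corollary~\ref{cor:bddD1}, and presumably the one behind Theorem~\ref{thm:var} and Corollary~\ref{cor:var}. I would prove it with the Mecke formula for the pair $(\eta,\eta^t)$. Represent $\eta^t=\eta_{e^{-t}}+\eta'_{1-e^{-t}}$. The points common to $\eta$ and $\eta^t$ are the retained points $\eta_{e^{-t}}$, which form a Poisson process of intensity $e^{-t}\lambda$. Conditionally on a retained point at $x$, the rest of the construction is an independent copy, and adding $x$ to both configurations corresponds to evaluating at $(\eta+\delta_x,\eta^t+\delta_x)$. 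Applying the multivariate Mecke formula to the marked process (points of $\eta$ marked retained or not, together with the independent $\eta'$) therefore gives
\[
\mathbb{E}\sum_{x\in\eta\cap\eta^t}h(x,\eta,\eta^t)=e^{-t}\int_\X \mathbb{E}[h(x,\eta+\delta_x,\eta^t+\delta_x)]\,\lambda(\dd x).
\]
Take $h(x,\mu,\mu')=\mathds{1}((x,\mu)\text{ satisfy }P)\,\mathds{1}((x,\mu')\text{ satisfy }P)$. The left-hand side is then $\mathbb{E}|P^0\cap P^t|$, and the right-hand side is $e^{-t}\int_\X\mathbb{E}[g(x,\eta)g(x,\eta^t)]\,\lambda(\dd x)$. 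Integrating over $t$, which is justified by Tonelli since all terms are nonnegative, completes the proof.

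The only delicate point is the Mecke formula for the joint pair, namely the marking and thinning argument. Since it is presumably already established in the proof of Theorem~\ref{thm:var}, I would cite it there rather than reprove it.
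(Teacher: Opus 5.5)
Your argument is the paper's proof: apply \autoref{thm:Varlb} to the indicator $g$, using $g^2=g$ and Mecke to get $\E\int_\X g^2\,\dd\lambda=\E|P^0|$. Then use the identity $e^{-t}\int_\X \E[g(x,\eta)g(x,\eta^t)]\,\lambda(\dd x)=\E|P^0\cap P^t|$, which the paper proves as in \autoref{lem:intermed}. Your marked-process derivation of that identity is the same Mecke-plus-thinning computation, written for a general $h$.

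One correction concerns your aside on finiteness. The identity does not show that both sides are infinite when $\E|P^0|=\infty$, and the corollary genuinely needs $\E|P^0|<\infty$, which it inherits from the ``$<\infty$'' in the hypothesis of \autoref{thm:Varlb}. Here is a counterexample to the aside. Take $\X=\R$ with Lebesgue $\lambda$ and $g(x,\eta)=\mathds{1}(\eta((x,x+a_x])=0)$. Then $\E|P^0|=\int e^{-a_x}\,\dd x$, whereas
\[
\int_0^\infty \E|P^0\cap P^t|\,\dd t=\int e^{-2a_x}\,\frac{e^{a_x}-1}{a_x}\,\dd x .
\]
Choosing $e^{-a_x}=(|x|\log|x|)^{-1}$ for large $|x|$ makes the first integral infinite and the second finite. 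Meanwhile the hypothesis reads $\infty\le\infty$, since its left side equals $\int a_x e^{-a_x}\,\dd x=\infty$. So finiteness should simply be taken as part of the assumption, as the paper implicitly does, rather than ``handled separately''.
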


While the above results aid in estimating the terms $T_1$ and $T_2$ in the decomposition \eqref{eq:vardecomp}, one still needs to estimate $T_3$ to obtain a tight variance estimate. In the examples of Poisson functionals analysed in this work, either $T_1$ or $T_2$ typically dominates the variance, rendering $T_3$ negligible. Accordingly, up to a relabelling of $g_1$ and $g_2$, we may assume without loss of generality that in \eqref{eq:vardecomp},
\begin{equation}\label{eq:varianceorder}
T_1 =\int^\infty_0 e^{-t} \int_\X \mathbb{E}[g_1(x,\eta) g_1(x,\eta^t)]\, \lambda(\dd x)\dd t \gg \int^\infty_0 e^{-t} \int_\X \mathbb{E}[g_2(x,\eta) g_2(x,\eta^t)]\, \lambda(\dd x)\dd t =T_2.
\end{equation}
It will often be convenient to use Theorems \ref{thm:L1-L2} and \ref{thm:Varlb} to verify the condition \eqref{eq:varianceorder}. Assuming \eqref{eq:varianceorder}, the following lemma and its corollary show that $T_3 \ll T_1$, so that
\begin{align*}
	\Var(F)\asymp \int^\infty_0 e^{-t} \int_\X \mathbb{E}[g_1(x,\eta) g_1(x,\eta^t)]\, \lambda(\dd x)\dd t.
\end{align*}

\begin{lemma}\label{lem:T_3order}
	Let $F \in \operatorname{dom} D$ and $g_1, g_2$ be as in \eqref{eq:vardecomp}. Assume that \eqref{eq:varianceorder} holds true. Then
	\begin{align*}
		\int^\infty_0 e^{-t} \int_\X \mathbb{E}[g_1(x,\eta) g_2(x,\eta^t)]\, \lambda(\dd x)\dd t &= \int^\infty_0 e^{-t} \int_\X \mathbb{E}[g_2(x,\eta) g_1(x,\eta^t)]\, \lambda(\dd x)\dd t \\&\ll \int^\infty_0 e^{-t} \int_\X \mathbb{E}[g_1(x,\eta) g_1(x,\eta^t)]\, \lambda(\dd x)\dd t.
	\end{align*}
\end{lemma}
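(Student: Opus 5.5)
The plan is to use Cauchy--Schwarz with respect to a positive semidefinite bilinear form built from the OU semigroup. For measurable $g,h:\X\times\mathbf N\to[0,\infty)$ with $g(x,\cdot),h(x,\cdot)\in L^2_\eta$, set
\[
\langle g,h\rangle := \int_0^\infty e^{-t}\int_\X \E[g(x,\eta)h(x,\eta^t)]\,\lambda(\dd x)\,\dd t .
\]
Then $T_1=\langle g_1,g_1\rangle$, $T_2=\langle g_2,g_2\rangle$ and $T_3=\langle g_1,g_2\rangle$. The claimed equality $\langle g_1,g_2\rangle=\langle g_2,g_1\rangle$ follows from the exchangeability of $(\eta,\eta^t)$ already used in \eqref{eq:vardecomp}: $(\eta,\eta^t)\overset{d}{=}(\eta^t,\eta)$ for each $t$, so the inner expectations coincide for every $x$ and $t$.

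The key step is to show that $\langle\cdot,\cdot\rangle$ is symmetric and positive semidefinite. For fixed $x$, $\E[g(x,\eta)h(x,\eta^t)]=\E[g(x,\eta)\,P_t h(x,\cdot)(\eta)]$ by \eqref{eq:Meh}. The OU semigroup is self-adjoint on $L^2_\eta$ with $P_t=P_{t/2}P_{t/2}$ (equivalently, $P_t$ acts on the $n$-th Wiener--Itô chaos as multiplication by $e^{-nt}$). Hence
\[
\E[g(x,\eta)\,P_t g(x,\cdot)(\eta)]=\E[(P_{t/2}g(x,\cdot)(\eta))^2]\ge 0 .
\]
Integrating against $e^{-t}\,\dd t\,\lambda(\dd x)$ preserves symmetry and nonnegativity, so $\langle\cdot,\cdot\rangle$ is a positive semidefinite symmetric form on the cone of nonnegative $g$ with finite $\int_\X\|g(x,\eta)\|^2_{L^2_\eta}\lambda(\dd x)$. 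The first part of \autoref{thm:L1-L2} shows all quantities involved are finite, since $\int_\X\|g_i(x,\eta)\|^2_{L^2_\eta}\lambda(\dd x)\le\E\int_\X(D_xF)^2\,\dd\lambda<\infty$ for $F\in\operatorname{dom}D$. Cauchy--Schwarz for this form then gives
\[
T_3=\langle g_1,g_2\rangle\le \sqrt{T_1T_2}.
\]

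Under \eqref{eq:varianceorder}, $T_2/T_1\to0$, so $T_3/T_1\le\sqrt{T_2/T_1}\to0$, i.e.\ $T_3\ll T_1$. The only delicate point is justifying the semigroup identity inside the $x$-integral and the exchange of integrals. Both are handled by Fubini together with the square-integrability above, applying the spectral (chaos) representation fibrewise in $x$; I expect this to be routine. One could also note $T_3\ge0$ by nonnegativity of $g_1,g_2$, although the lemma does not need it.
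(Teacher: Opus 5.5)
Your proposal is correct and is essentially the paper's argument. Both reduce the claim to $T_3\le\sqrt{T_1T_2}$ via Cauchy--Schwarz for the positive semidefinite form. The paper works in chaos coordinates, where $T_3=\sum_{n\ge0}\frac{n!}{n+1}\langle f_{n+1,1},f_{n+1,2}\rangle_{n+1}$, while you get positivity from $P_t=P_{t/2}P_{t/2}$ and self-adjointness. One small adjustment: for Cauchy--Schwarz, define the form on the whole vector space of real-valued $g$ with $\int_\X\|g(x,\eta)\|_{L^2_\eta}^2\lambda(\dd x)<\infty$, not just on the cone of nonnegative $g$; your positivity argument already covers this.
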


\begin{corollary}\label{cor:T3}
	Let $A^t, B^t$ be as in \eqref{eq:At}. Assume that \begin{equation*}
		\int^\infty_0 \mathbb{E} |A^0\cap A^t|\, \dd t \gg \int^\infty_0 \mathbb{E} |B^0\cap B^t|\, \dd t .
	\end{equation*} 
	Then
	$$
	\int^\infty_0 \mathbb{E} |A^0\cap B^t|\, \dd t  = \int^\infty_0 \mathbb{E} |B^0\cap A^t|dt  \ll \int^\infty_0 \mathbb{E} |A^0\cap A^t|\, \dd t.
	$$
\end{corollary}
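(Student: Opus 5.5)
The plan is to deduce Corollary~\ref{cor:T3} from Lemma~\ref{lem:T_3order}, using the decomposition of $DF$ from \eqref{eq:AxBx}. In the setting of the corollary, where $|DF|\in\{0\}\cup[a,b]$, we take
\[
g_1=(DF)_+ \quad\text{and}\quad g_2=(DF)_-.
\]
These are nonnegative and have disjoint supports, and they satisfy $a\mathds{1}(A_x)\le g_1\le b\mathds{1}(A_x)$ and $a\mathds{1}(B_x)\le g_2\le b\mathds{1}(B_x)$. If one prefers to work with exact indicators, one can instead take $g_1=\mathds{1}(A_x)$ and $g_2=\mathds{1}(B_x)$ directly. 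All comparisons below hold up to the constants $a^2$ and $b^2$, which cannot affect relations of the form $\ll$.

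The first step is the Mecke-type identity
\[
\int_0^\infty e^{-t}\int_\X \E[h_1(x,\eta)h_2(x,\eta^t)]\,\lambda(\dd x)\,\dd t=\int_0^\infty \E\sum_{x\in\eta\cap\eta^t}\tilde h_1(x,\eta)\,\tilde h_2(x,\eta^t)\,\dd t,
\]
where $\tilde h(x,\mu)=h(x,\mu-\delta_x)$. This is the same computation that underlies Theorem~\ref{thm:var} and the expressions for $T_1,T_2,T_3$ stated before Corollary~\ref{cor:bddD1}. The coupling works as follows: $(\eta,\eta^t)$ can be represented through a Poisson process of intensity $e^{-t}\lambda$ for the common points, together with independent Poisson processes of intensity $(1-e^{-t})\lambda$ for the points lying in only one of the two configurations. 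Applying the Mecke formula to the common part produces the factor $e^{-t}$ and the added point $x$ in both $\eta$ and $\eta^t$. With $h_i$ the indicators of $A_x$ and $B_x$, this identity converts the three integrals into the form
\[
\int_0^\infty \E|A^0\cap A^t|\,\dd t,\qquad \int_0^\infty \E|B^0\cap B^t|\,\dd t,\qquad \int_0^\infty \E|A^0\cap B^t|\,\dd t,
\]
up to constants. In particular, the hypothesis of the corollary is exactly \eqref{eq:varianceorder} for this choice of $g_1,g_2$.

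Next, the equality $\int\E|A^0\cap B^t|\,\dd t=\int\E|B^0\cap A^t|\,\dd t$ follows from the exchangeability of $(\eta,\eta^t)$, which is also the first equality in Lemma~\ref{lem:T_3order}. The bound $\ll\int\E|A^0\cap A^t|\,\dd t$ then follows by applying Lemma~\ref{lem:T_3order} and translating back through the identity above.

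The step that needs the most care is checking the hypothesis $F\in\operatorname{dom}D$ of Lemma~\ref{lem:T_3order}, which requires $DF\in L^2$. I would handle this by noting that $|DF|\le b\,(\mathds{1}(A_x)+\mathds{1}(B_x))$. Hence $DF\in L^2$ whenever $\E|A^0|+\E|B^0|<\infty$, and this is implicit in the finiteness of the integrals appearing in the hypothesis.
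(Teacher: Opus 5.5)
Your proposal is correct and follows the paper's own route: the paper also proves Corollary~\ref{cor:T3} by rewriting $T_1,T_2,T_3$ as the corresponding set-intersection integrals, using the Mecke-type identity from the proof of Lemma~\ref{lem:intermed}, and then invoking Lemma~\ref{lem:T_3order}. Your last paragraph is not needed, since $F\in\operatorname{dom}D$ is a standing assumption of Section~\ref{Sec:VarEst}, and its claim is false in general: take $F\in L^2_\eta$ to be the centred sum of the indicators $\mathds{1}\{\eta(\X_i)\ge 1\}$ over $\lceil e^j/j^2\rceil$ disjoint cells $\X_i$ of mass $j$, for each $j\in\N$; then $D_xF=\mathds{1}\{\eta(\X_i)=0\}$ for $x\in\X_i$ and $\int_0^\infty\E|A^0\cap A^t|\,\dd t\le\sum_i e^{-\lambda(\X_i)}<\infty$, yet $\E|A^0|=\sum_i\lambda(\X_i)e^{-\lambda(\X_i)}=\infty$.
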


\subsection{Superconcentration for sums of scores}\label{sec:SCres} Given the equivalence of superconcentration and chaos of Poisson functionals, as stipulated in \autoref{thm:eqv}, it is natural to ask how one can prove either superconcentration or chaos for general square integrable Poisson functionals. This question seems to be very challenging to answer in general. In Section \ref{sec:app}, we consider some applications, where by doing a direct variance analysis, we prove superconcentration of certain Poisson functionals, and hence their chaos.

The variance analysis can be simplified in the setting when one has a Poisson functional that can be expressed as a sum of \textit{score functions}. Let $\eta_s$ be a Poisson process on $\X$ with intensity measure $\lambda_s$ as in \autoref{def:supconc}. For measurable score functions $f_s: \X \times \Nb \to \R$, $s>0$, assume that $F_s\equiv F_s(\eta_s)$ has the representation
\begin{equation}
	\label{eq:hs}
	F_s(\eta_s):= \sum_{x \in \eta_s} f_s(x,\eta_s),
\end{equation}
when the sum converges. Here, we implicitly assume via this representation that the functional $F_s$ can be
decomposed as a sum of local contributions, known as scores, from each point $x \in \eta_s$. 
We further assume that the scores satisfy the following:
\begin{itemize}
\item[\namedlabel{A1}{(\textbf{A1})}] There exists a constant $C \in (0,\infty)$ such that $T_s \le C\, \int_\X \E[f_s(x,\eta+\delta_x)^2]\, \lambda_s(\dd x)$, where for $x,y \in \X$, we denote 
    \begin{align*}
        T_s :=&\int_{\X^2} \E[f_s(x,\eta_s+\delta_{x}+\delta_{y}) f_s(y,\eta+\delta_{x}+\delta_{y})] \, \lambda_s^2(\dd x,\dd y)\\
        &\qquad \qquad -\int_{\X^2}\E[f_s(x,\eta+\delta_{x})]\E[f_s(y,\eta+\delta_{y})]\, \lambda_s^2(\dd x,\dd y).
    \end{align*}
    
    \end{itemize}
\begin{itemize}
	\item[\namedlabel{A2}{(\textbf{A2})}] There exists $\eps_s \to 0$ as $s \to \infty$ such that for $\lambda_s$-almost every $y \in \X$,
	$$
	\E[f_s(y,\eta_s+\delta_y)^2]\le \eps_s \,  \E\left[\Big(\sum\limits_{x\in\eta_s}D_yf_s(x,\eta_s)\Big)^2\right].
	$$
\end{itemize}

The following proposition is a direct consequence of an expansion of the variance of $F_s$ given by \eqref{eq:hs} via the Mecke formula (see \eqref{eq:Mecke} below).
\begin{prop}[Superconcentration for sums of scores]\label{thm:supcon_sos}

    Let $F_s \in L^2_{\eta_s}$ be as in \eqref{eq:hs} satisfying the assumption \ref{A1}. Then
    $$
    \Var(F_s)= \mathcal{O}\left(\int\E[f_s(y,\eta_s+\delta_y)^2]\, \lambda_s(\dd x)\right).
    $$
    Moreover, when \ref{A2} holds, then $F_s$ is $(1+C)\eps_s/(1-\sqrt{\eps_s})^2$-superconcentrated for $C$ as in \ref{A1}.
\end{prop}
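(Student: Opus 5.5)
By the Mecke formula, write $\E F_s = \int_\X \E[f_s(x,\eta_s+\delta_x)]\,\lambda_s(\dd x)$. For the second moment, split $F_s^2=\sum_{x\in\eta_s} f_s(x,\eta_s)^2+\sum_{x\neq y\in\eta_s} f_s(x,\eta_s)f_s(y,\eta_s)$ and apply the univariate and bivariate Mecke formulas to the two sums. This gives the exact identity
\begin{align*}
\Var(F_s)=\int_\X \E[f_s(x,\eta_s+\delta_x)^2]\,\lambda_s(\dd x)+T_s .
\end{align*}
Assumption \ref{A1} then bounds the right-hand side by $(1+C)\int_\X \E[f_s(y,\eta_s+\delta_y)^2]\,\lambda_s(\dd y)$. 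This proves the first assertion, with the implied constant equal to $1+C$. Square-integrability of $F_s$ makes all these quantities finite, so the manipulations are justified.

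For superconcentration, I compare this bound with the Poincar\'e energy $\E\int_\X (D_yF_s(\eta_s))^2\lambda_s(\dd y)$. By \eqref{eq:Dxdecomp},
\[
D_yF_s(\eta_s)=f_s(y,\eta_s+\delta_y)+\sum_{x\in\eta_s}D_yf_s(x,\eta_s).
\]
Let $a(y)=\|f_s(y,\eta_s+\delta_y)\|_{L^2_\eta}$ and $b(y)=\|\sum_{x\in\eta_s}D_yf_s(x,\eta_s)\|_{L^2_\eta}$. The reverse triangle inequality in $L^2$ gives
\[
\|D_yF_s(\eta_s)\|_{L^2_\eta}\ge b(y)-a(y).
\]
Assumption \ref{A2} gives $a(y)\le\sqrt{\eps_s}\,b(y)$, so $b(y)-a(y)\ge (1-\sqrt{\eps_s})\,b(y)\ge 0$ for $s$ large enough that $\eps_s<1$. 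Hence, for $\lambda_s$-a.e.\ $y$,
\[
\E[f_s(y,\eta_s+\delta_y)^2]\le \eps_s b(y)^2\le \frac{\eps_s}{(1-\sqrt{\eps_s})^2}\,\E[(D_yF_s(\eta_s))^2].
\]

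Integrating over $y$ with respect to $\lambda_s$ and combining with the first step gives
\[
\Var(F_s)\le \frac{(1+C)\eps_s}{(1-\sqrt{\eps_s})^2}\,\E\int_\X (D_yF_s(\eta_s))^2\lambda_s(\dd y),
\]
which is exactly the claimed $(1+C)\eps_s/(1-\sqrt{\eps_s})^2$-superconcentration in the sense of \autoref{def:supconc}.

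The only delicate point is the exact variance identity in the first step. I must apply the bivariate Mecke formula carefully so that the diagonal terms ($x=y$) and the off-diagonal terms are separated correctly, and so that the off-diagonal term, after subtracting $(\E F_s)^2$, matches the definition of $T_s$ exactly. Everything after that is the elementary triangle-inequality argument above.
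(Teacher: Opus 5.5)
Your proposal is correct and is essentially the paper's proof. The paper also uses the Mecke expansion $\Var(F_s)=\int_\X \E[f_s(x,\eta_s+\delta_x)^2]\,\lambda_s(\dd x)+T_s$, bounded via \ref{A1}, and then the comparison $\E[f_s(y,\eta_s+\delta_y)^2]\le \eps_s(1-\sqrt{\eps_s})^{-2}\,\E[(D_yF_s(\eta_s))^2]$. Your reverse triangle inequality is exactly what the paper's Cauchy--Schwarz bound on the cross term amounts to, and it is the cleanest way to see the constant $(1-\sqrt{\eps_s})^2$.

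One caveat: $F_s\in L^2_{\eta_s}$ alone does not make $\E\sum_{x\in\eta_s}f_s(x,\eta_s)^2$ or the off-diagonal Mecke terms finite, since scores of mixed sign can cancel. That finiteness should be read as part of the implicit assumption that $T_s$ in \ref{A1} is well defined, as the paper tacitly does.
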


To demonstrate the broad applicability of our general framework, Section \ref{sec:app} rigorously establishes superconcentration and chaos in three distinct models from stochastic geometry. First, we analyse the $\pm 1$-indicator for left-to-right occupied crossings of the box $W_s= [-s,s]^d$, $s>1$, in critical Boolean and Voronoi percolation. By proving the superconcentration of this functional, we deduce the chaotic behaviour of its associated pivotal points (see Section \ref{sec:perc}). The remaining two applications focus on random geometric graphs generated by a homogeneous Poisson point process on the torus $[0,1]^d/\sim$ in the dense regime, where the average degree $sr_s^d$ diverges as $s \to \infty$. Specifically, we establish superconcentration, and identify a corresponding chaotic set, for the number of vertices with degree strictly less than a fixed integer $k \in \N$, which includes isolated vertices as a natural special case. Finally, we prove superconcentration for the number of $\Gamma$-components in such a random geometric graph, providing sharp variance control for the count of isolated isomorphic copies of a given feasible connected graph $\Gamma$ on $k$ vertices.

The rest of the paper is organised as follows. In section \ref{sec:app}, we consider the three examples mentioned above.  Section \ref{sec:proofs} contains the proofs of all our main results in Section \ref{sec:mainresults}.

\section{Applications}\label{sec:app}
\subsection{Horizontal box-crossings in continuum percolations}\label{sec:perc} The first example we consider is in the context of continuum percolations in $\R^2$ such as Voronoi or Boolean percolations.
Consider a stationary Poisson point process $\eta$ with intensity $\lambda>0$ on $\R^2$. In the Boolean model, the  {\it occupied region} is defined as $\Occ(\eta) = \cup_{x \in \eta}B(x,1)$, where $B(x,1)$ denotes the closed ball of unit radius around $x \in \R^2$. For Voronoi percolation, fix $\lambda=1$ and, for each $x \in \eta$, independently colour the corresponding Voronoi cell in black with probability $p$, or in white, with probability $1-p$. The occupied region $\Occ(\eta)$ is then defined as the union of the black cells. It is well known that there exists a critical parameter $\lambda_c>0$ (respectively $p_c = \frac{1}{2}$ for Voronoi percolation), such that the occupied region in the Boolean model (respectively Voronoi percolation) contains an unbounded component almost surely whenever $\lambda > \lambda_c$ (respectively $p > p_c$), and does not contain any infinite component almost surely otherwise (see e.g.\ \cite{hall85} and \cite{BO06}).

A critical continuum percolation model, specifically its macroscopic properties such as percolation, is typically sensitive to small perturbations  of $\eta$. One commonly considered observable for this purpose is the event that there is a left-to-right (L-R) crossing of the box $W_s: =  [-s,s]^2$, $s>1$, through its occupied region $\Occ(\eta)$. Let $F_s$ denote the $\pm 1$-indicator that there is such an L-R occupied crossing of $W_s$ in $\Occ(\eta)$, that is,
$$
F_s(\eta) = \begin{cases}
	1, &\text{if there is an L-R occupied crossing of } W_s,\\
	-1, &\text{otherwise.}
\end{cases}
$$ 
Further, we call a point $x \in \eta$ as \textit{pivotal} for $F_s$ if the removal of $x$ from $\eta$ would flip the value of $F_s$. In particular,we define the {\it pivotal set} associated with $F_s$ and $\eta$ as
$$
\mathcal{P}_{\eta}^s := \{x\in \eta : D^-_x F_s(\eta) \neq 0\}.
$$
Also, denote by $\alpha_4(s)$, $s > 1$ the so-called \textit{$4$-arm probability}, that is, the probability that there are $4$ arms of alternating types (occupied/vacant for Boolean, and black/white for Voronoi) from $\partial W_1$ to $\partial W_s$ inside the annulus $W_s \setminus W_1$ (see \cite{bhattacharjee2024spectrapoissonfunctionalsapplications} for further details). A crucial property of these probabilities, established in \cite[Equation (20)]{MS22} for the critical Boolean model and \cite[Proposition 1.15]{Van} for critical Voronoi percolation, is that there exists a universal constant $c \in (0,2)$ such that for any $1<s<\infty$,
\begin{equation*}
\alpha_4(s)\ge c(1/s)^{2-c}.
\end{equation*}
Note as a consequence that $s^2 \alpha_4(s) \ge c s^c \to \infty$ as $s \to \infty$.

\begin{theorem}\label{thm:crossing}
	The crossing functional $F_s$, $s > 1$ is $\left(C \,(s^2 \alpha_4(s))^{-1} \right)$-superconcentrated for some constant $C \in (0,\infty)$, that is,
	$$
	\Var(F_s) \le \frac{C}{s^2\alpha_4(s)} \int_{\R^2} \E (D_x F_s(\eta))^2 \, \dd x.
	$$
	Moreover, 
	\begin{equation*}
		\Var(F_s)= 4\int_0^\infty\E|\mathcal{P}_{\eta}^s \cap\mathcal{P}_{\eta^t}^s|\, \dd t.
	\end{equation*}
	Consequently, there exists a constant $C'\in (0,\infty)$ such that the set of Pivotal points $\mathcal{P}_{\eta}^s, s >1$ is $(C' \,(\delta_s \, s^2 \alpha_4(s))^{-1}, \delta_s)$-chaotic for any $\delta_s>0$ with $\delta_s \to 0$ and $\delta_s \, s^2 \alpha_4(s) \to \infty$ as $s \to \infty$.
\end{theorem}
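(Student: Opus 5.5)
The plan rests on the fact that $F_s$ takes only the values $\pm 1$, so $\Var(F_s)\le \E F_s^2 = 1$, while $D_xF_s\in\{0,\pm 2\}$. Superconcentration is therefore equivalent to a lower bound on the Poincar\'e side:
$$
\int_{\R^2}\E (D_xF_s(\eta))^2\,\dd x = 4\int_{\R^2}\P\big(D_xF_s(\eta)\neq 0\big)\,\dd x \gtrsim s^2\alpha_4(s).
$$
Granting this, $\Var(F_s)\le 1\le \frac{C}{s^2\alpha_4(s)}\int_{\R^2}\E(D_xF_s)^2\,\dd x$, which is the first claim.

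To prove the lower bound, I restrict the integral to $x\in W_{s/2}$, which has area of order $s^2$. For such $x$, I show that $\P(\text{adding } x \text{ changes } F_s)\gtrsim \alpha_4(s)$ uniformly. The argument runs as follows.
\begin{itemize}
\item The event that $x$ is pivotal contains the following intersection: four alternating arms from $\partial(x+W_1)$ to $\partial(x+W_{s/4})$; a suitable local configuration near $x$ (e.g.\ an empty ball around $x$, or a suitably coloured neighbourhood in the Voronoi case); and RSW-type gluing connecting the two occupied arms to the left and right sides of $W_s$ and the two vacant arms to the top and bottom.
\item RSW estimates, arm separation and quasi-multiplicativity, available for critical Boolean percolation in \cite{MS22} and for Voronoi percolation in \cite{Van} (as used in \cite{bhattacharjee2024spectrapoissonfunctionalsapplications}), bound the probability of this intersection below by $c\,\alpha_4(s/4)\gtrsim\alpha_4(s)$.
\item The local modification near $x$ costs only a constant factor, by finite-energy arguments combined with arm separation.
\end{itemize}
I expect this step to be the main obstacle. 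If the cited works already contain the estimate $\P(x \text{ pivotal})\asymp\alpha_4(s)$ for bulk points, I would simply quote it.

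For the variance identity, I treat the Voronoi model as a Poisson process on $\R^2\times\{\text{black},\text{white}\}$ with independent colour marks. Adding a point enlarges the occupied region in the Boolean model. In the Voronoi model, adding a black (resp.\ white) cell only recolours territory black (resp.\ white), since the other cells keep their colours. Hence for each fixed (marked) $x$, the sign of $D_xF_s(\mu)$ does not depend on $\mu$. It follows that $D_x(F_s/2)(\mu)\,D_x(F_s/2)(\mu')\in\{0,1\}$ for all $\mu,\mu'$. The square integrability of $F_s$ and of $DF_s$ holds because $|D_xF_s|\le 2$ and $D_xF_s=0$ unless $x$ lies within distance $1$ of $W_s$ (Boolean), or is otherwise localized in the Voronoi case by the usual tail bounds. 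Applying \autoref{cor:var} to $F_s/2$, whose pivotal set is exactly $\mathcal{P}^s_\eta$, gives $\Var(F_s/2)=\int_0^\infty\E|\mathcal{P}^s_\eta\cap\mathcal{P}^s_{\eta^t}|\,\dd t$. Multiplying by $4$ gives the stated identity.

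Finally, I set $\eps_s=C(s^2\alpha_4(s))^{-1}\to 0$. Let $\delta_s\to0$ with $\delta_s s^2\alpha_4(s)\to\infty$, so that $\eps_s/\delta_s\to 0$. \autoref{thm:eqv} then shows that $F_s$ (equivalently $F_s/2$) is $(\eps_s e^{\delta_s}/\delta_s,\delta_s)$-chaotic, and $e^{\delta_s}$ is bounded. By the equivalence in \autoref{cor:var} (see also \autoref{rem:GenChaosSupEq}), the pivotal set $\mathcal{P}^s_\eta$ is then $(C'(\delta_s s^2\alpha_4(s))^{-1},\delta_s)$-chaotic.
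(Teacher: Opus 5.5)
Your proposal is correct and follows essentially the same route as the paper. The paper bounds $\Var(F_s)\le 1$ against $\int\E(D_xF_s)^2\gtrsim s^2\alpha_4(s)$, and it simply quotes this lower bound from \cite{bhattacharjee2024spectrapoissonfunctionalsapplications}: the proof of the lower bound in (H4) for the Boolean model, and Proposition 3.15 there for Voronoi. So your fallback of citing it is exactly what is done. It then applies \autoref{cor:var} to $F_s/2$, using that the sign of the difference operator is fixed by the colour of the point, and concludes with \autoref{thm:eqv} and \autoref{rem:GenChaosSupEq}. Your add-one monotonicity argument for the Voronoi sign (adding a black cell only enlarges the black region, adding a white one only shrinks it) is just the add-one counterpart of the paper's remove-one argument.
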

\begin{proof}
	Since $F_s \in \{\pm 1\}$, we have that $\Var(F_s(\eta))  \le 1$ for both critical Boolean and Voronoi percolation. On the other hand, consider the r.h.s.\ of the Poincar\'{e} inequality \eqref{thm:poincare}, that is the integral $\int_{\R^2} \E (D_x F_s(\eta))^2 \, \dd x$. Since $(D_x F_s(\eta))^2 \in \{0,4\}$ in both models, we have by  \cite[Proof of lower bound in (H4), Page 46]{bhattacharjee2024spectrapoissonfunctionalsapplications} that there exists some constant $C_1 \in (0,\infty)$ such that
	$$
	\int_{\R^2} \E (D_x F_s(\eta))^2 \, \dd x \ge 4 \int_{W_{s/2}} \mathbb{P}(D_x F_s \neq 0) \, \dd x \ge C_1 s^2 \alpha_4(s)
	$$
	for the Boolean model while for the Voronoi model, the same follows by \cite[Proposition 3.15]{bhattacharjee2024spectrapoissonfunctionalsapplications}, that is,
	$$
	\int_{\X} \E (D_{(x,a)} F_s(\eta))^2 \, \dd x \, \nu(\dd a) \ge C_1 s^2 \alpha_4(s),
	$$
	where $\X = \R^2 \times \{0,1\}$ with $0,1$ representing the colours black and white, and $\nu = \frac{1}{2} \delta_{0} + \frac{1}{2} \delta_1$. This together with the fact that $\Var(F_s(\eta))  \le 1$ yields the superconcentration.
	\medskip
	
	For the second assertion, first consider the critical Boolean percolation. In this case, $D^-_x F_s(\eta) \in \{0,2\}$, and hence $D^-_x F_s(\eta) D^-_x F_s(\eta^t) \in \{0,4\}$ for all $x \in \eta$. 
	
	\noindent On the other hand, for critical Voronoi percolation, one has $D^-_x F_s(\eta) \in \{-2,0,2\}$ for all $x \in \eta$. But note that $D^-_x F_s(\eta) D^-_x f_s(\eta^t) \ge 0$ for all $t \ge 0$ and $x \in \eta \cap \eta^t$. Indeed, if $D_x^- F_s(\eta) = -2$, that is, removing $x$ creates an L-R crossing of $W_s$ (with none being there before), then the cell associated to $x$ must be white, and hence removing it from $\eta^t$ cannot destroy any existing black crossing of $W_s$ and thus $D^-_x F_s(\eta^t) \in \{-2,0\}$. A similar argument applies when $D_x^- F_s(\eta) = 2$, showing $D^-_x F_s(\eta) D^-_x F_s(\eta^t) \in \{0,4\}$. 
	
	Applying now \autoref{cor:var} for $F_s' = F_s/2$, and noting that the set $A_s^t$ in \autoref{cor:var} is exactly $\mathcal{P}_{\eta^t}^s$ with $\eta^t$ as in \eqref{eq:Meh} for these percolation models, we obtain the second assertion, that is,
	\begin{equation*}
		\Var(F_s)= 4\int_0^\infty\E|\mathcal{P}_{\eta}^s \cap\mathcal{P}_{\eta^t}^s|\, \dd t.
	\end{equation*}
	
	Finally, since by the first assertion we have that $F_s$ is $\left(C \,(s^2 \alpha_4(s))^{-1} \right)$-superconcentrated, by \autoref{rem:GenChaosSupEq}, combining \autoref{thm:eqv} and \autoref{cor:var} yields the final assertion.
\end{proof}

\subsection{Vertices with small degrees in random geometric graphs}\label{sec:isovert} Consider the $d$-dimensional torus $\mathbb{T}=[0,1]^d/\sim$, $d \in \N$, and let $\eta_s$ be a stationary Poisson point process with intensity $s>0$ on $\mathbb{T}$, that is, we take $\lambda_s(\dd x) = s\,  \dd x$. The random geometric graph $\operatorname{RGG}(\eta_s, r_s)$ with cut-off radius $r_s>0$ and vertices given by $\eta_s$ is obtained by joining two vertices $x,y \in \eta_s$, $x\neq y$, by an edge if and only if $\|x-y\| \le r_s$, with $\|\cdot\|$ denoting the usual Euclidean norm. In this example, we shall consider the number of vertices with degree less than some given positive integer $k \in \N$ in the dense regime, that is, where the average degree $sr_s^d$ diverges to infinity as $s \to \infty$. Note that the case $k=1$ simply corresponds to the number of isolated vertices in $\operatorname{RGG}(\eta_s,r_s)$. Also note that a vertex having degree less than $k$ is almost surely equivalent to its $k$-nearest neighbour ($k$-NN) distance strictly exceeding $r_s$. Let $N_{<k}(\eta_s, r_s)$ denote the set of vertices with degree less than $k$ in $\operatorname{RGG}(\eta_s,r_s)$. Then $|N_{<k}(\eta_s, r_s)|$, the number of points in $\eta_s$ with degree at most $k-1$ (or having $k$-NN distance larger than $r_s$) can be expressed as a sum of scores as in \eqref{eq:hs} with the score function given by
\begin{align*}
    f_s(x,\eta_s)=\mathds{1}(|B(x,r_s)\cap\eta_s| \le k), \quad x \in \eta_s,
\end{align*}
where $B(x,r)$ denotes the closed ball of radius $r>0$ around a point $x \in \R^d$.
Note that the random variable $|B(x,r_s)\cap\eta_s|$ follows a $\operatorname{Poisson}(\kappa_d s r_s^d)$ distribution, where $\kappa_d$ denotes the volume of the unit ball in $\R^d$, $d \in \N$. Thus, if $sr_s^d \to \infty$ as $s \to \infty$, then
\begin{align}\label{eq:k-iso exp}
    \E[f_s(x,\eta_s+\delta_x)]=\P\left(|B(x,r_s)\cap\eta_s| \le k-1\right) = e^{-\kappa_dsr_s^d}\sum\limits^{k-1}_{n=0}\frac{(\kappa_dsr_s^d)^n}{n!}\asymp s^{k-1}r_s^{d(k-1)}e^{-\kappa_dsr_s^d}.
\end{align}
It also follows that
\begin{equation}\label{eq:rhsA1}
	\int_{\mathbb{T}} \E[f_s(x,\eta_s+\delta_x)]\, \lambda_s(\dd x) \asymp s (sr_s^d)^{k-1} e^{-\kappa_dsr_s^d}.
\end{equation}

For any Poisson process $\eta$ on a measurable space $(\mathbb{X}, \mathcal{X})$ with a $\sigma$-finite intensity measure $\lambda$, a key result to find expectations of certain functionals of $\eta$ is the classical Mecke formula (see e.g.\ \cite[Theorem 4.1]{LastPenrose}): for any measurable function $h : \X \times \mathbf{N} \to [0,\infty]$,
\begin{equation}\label{eq:Mecke}
	\E\int_\X h (x,\eta) \, \eta(\dd x) := \E \sum_{x \in \eta} h(x,\eta) = \E \int_\X  h(x,\eta+\delta_x) \, \lambda(\dd x).
\end{equation}

Let $\mathcal{N}_{k}^t$, $t \ge 0$ denote the set of vertices in $\eta_s^t$ that have at least one neighbour with degree $k$. Drawing a parallel to the example in Section \ref{sec:perc}, these can be viewed as the \textit{pivotal points} for the creation of new vertices with degree at most $k-1$, since removing any vertex from $\mathcal{N}_{k}^t$ would create at least one such new vertex.
\begin{theorem}\label{Thm:KIso}
	 Let $s >0$ and $r_s \in (0,1/2)$ be such that $sr_s^d \to \infty$ as $s \to \infty$. For $k \in \N$, denote the number of vertices with at most $k-1$ neighbours by
	 $$
	 I_{s,k}(\eta_s) = \sum_{x \in \eta_s} \mathds{1}(|B(x,r_s)\cap\eta_s|\le k)
	 $$
	 Then the following holds.
	 \begin{enumerate}[(i)]
	 	\item $I_{s,k}$ is $C\, (sr_s^d)^{-1}$-superconcentrated for some constant $C \in (0,\infty)$, that is,
	$$
	\Var(I_{s,k}(\eta_s)) \le \frac{C}{sr_s^d} \int_{\mathbb{T}} \E (D_x I_{s,k}(\eta_s))^2 \, \lambda_s(\dd x).
	$$
	\item Moreover,
    $$
    \Var(I_{s,k})\asymp \int^\infty_0\E|\mathcal{N}_{k}^0\cap \mathcal{N}_{k}^t| \dd t.
    $$
    \item There exists a constant $C'\in (0,\infty)$ such that the set $\mathcal{N}_{k} \equiv \mathcal{N}_{k}^0$ is $(C' \, \delta_s\, (sr_s^d)^{-1}, \delta_s)$-chaotic for any $\delta_s>0$ with $\delta_s \to 0$ and $\delta_s \,  sr_s^d\to \infty$ as $s \to \infty$.
\end{enumerate}
\end{theorem}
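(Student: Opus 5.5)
The plan is to treat $I_{s,k}$ as a sum of scores and use \autoref{thm:supcon_sos} for (i). Then for (ii) we use the decomposition \eqref{eq:Dxdecomp} together with Theorems~\ref{thm:L1-L2} and~\ref{thm:Varlb}. Finally (iii) follows by combining (i), (ii) and \autoref{thm:eqv}. Write $\mu_s:=s(sr_s^d)^{k-1}e^{-\kappa_d sr_s^d}$, which by \eqref{eq:rhsA1} is the order of $\int_{\mathbb T}\E[f_s(x,\eta_s+\delta_x)^2]\,\lambda_s(\dd x)$, since $f_s^2=f_s$.

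\textbf{Step 1: assumption \ref{A2} with $\eps_s = C/(sr_s^d)$.}
Adding $y$ changes $f_s(x,\cdot)$ only for $x\in\eta_s\cap B(y,r_s)$ with exactly $k-1$ neighbours in $\eta_s$, and then $D_yf_s(x,\eta_s)=-1$.
\begin{itemize}
\item \emph{Lower bound.} We have $\E(\sum_x D_yf_s(x,\eta_s))^2\ge \E\sum_x |D_yf_s(x,\eta_s)|$. By the Mecke formula \eqref{eq:Mecke} this equals $s\int_{B(y,r_s)}\P(|B(x,r_s)\cap(\eta_s+\delta_y)|=k)\,\dd x$, which is $\asymp sr_s^d\,(sr_s^d)^{k-1}e^{-\kappa_dsr_s^d}$.
\item \emph{Ratio.} Dividing by $\E f_s(y,\eta_s+\delta_y)\asymp (sr_s^d)^{k-1}e^{-\kappa_dsr_s^d}$ from \eqref{eq:k-iso exp} gives \ref{A2} with $\eps_s\asymp (sr_s^d)^{-1}$.
\end{itemize}

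\textbf{Step 2: assumption \ref{A1}.}
We expand $T_s$ and split according to whether $\|x-y\|>2r_s$, where the two scores are independent and the covariance vanishes, or $\|x-y\|\le 2r_s$.
\begin{itemize}
\item \emph{Near pairs.} We bound $\E[f_s(x,\cdot)f_s(y,\cdot)]$ by the probability that $B(x,r_s)\cup B(y,r_s)$ carries at most $2k$ further points. This is at most a polynomial in $s|B_x\cup B_y|$ times $e^{-s|B(x,r_s)\cup B(y,r_s)|}$.
\item \emph{Integration.} We use $|B(x,r_s)\cup B(y,r_s)|\ge \kappa_d r_s^d+c\min(u,r_s)r_s^{d-1}$, where $u=\|x-y\|$. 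The excess volume gives an exponentially decaying factor in $sr_s^{d-1}u$, so integrating over $u$ should give $T_s=\mathcal O(\mu_s)$.
\item \emph{Main obstacle.} This is where I expect the real difficulty. The polynomial prefactors (powers of $sr_s^d$) must be beaten by the exponential gain from the non-overlapping part. For very close pairs the gain is small, but then $x,y$ are neighbours of each other, which removes one admissible point from each ball. I would first try to handle the two ranges $u\le r_s$ and $r_s<u\le 2r_s$ separately.
\end{itemize}
Given \ref{A1} and \ref{A2}, \autoref{thm:supcon_sos} yields $\Var(I_{s,k})=\mathcal O(\mu_s)$ and (i).

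\textbf{Step 3: part (ii).}
Decompose $D_xI_{s,k}=g_1-g_2$ as in \eqref{eq:Dxdecomp}, with
\[
g_1=f_s(x,\eta_s+\delta_x),\qquad g_2=\#\{y\in\eta_s\cap B(x,r_s): y \text{ has exactly } k-1 \text{ neighbours in } \eta_s\}.
\]
\begin{itemize}
\item \emph{$g_2$ is an indicator up to constants.} Any two counted $y$'s within distance $r_s$ are neighbours, so each $r_s/2$-ball contains at most $k$ of them. Covering $B(x,r_s)$ by $C_d$ such balls gives $1\{x\in\mathcal N_k\}\le g_2\le C_dk\,1\{x\in\mathcal N_k\}$ (for the added point $x$). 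Hence $T_2\asymp\int_0^\infty\E|\mathcal N_k^0\cap\mathcal N_k^t|\,\dd t$.
\item \emph{Bounds on $T_1$ and $T_3$.} The variance formula \eqref{eq:vardecomp} holds for any such decomposition. \autoref{thm:L1-L2} gives $T_1\le\mu_s$ up to constants, and $T_3\ge0$.
\item \emph{Upper bound.} We get $\Var\le T_1+T_2\lesssim \mu_s+T_2$.
\item \emph{Lower bound.} We need $T_2\gtrsim\mu_s$, which follows from \autoref{thm:Varlb} with $\alpha\asymp sr_s^d$ because $\E\int g_2^2\asymp sr_s^d\,\mu_s$. Combined with a matching lower bound $\Var\gtrsim\mu_s$, obtained either by a direct second-moment computation or from known results on low-degree counts, this gives $\Var\asymp T_2$.
\item \emph{Remaining piece.} This needs $T_2\lesssim\mu_s$. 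I would bound $\E|\mathcal N_k^0\cap\mathcal N_k^t|$ via Mecke by a two-configuration version of the Step 2 computation. For $t$ of order one, the degree-$(k-1)$ neighbours of $x$ in $\eta_s$ and $\eta_s^t$ are essentially independent, which produces the extra decay $e^{-t}$ and the gain of a factor $(sr_s^d)^{-1}$.
\end{itemize}

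\textbf{Step 4: part (iii).}
By (i) and \autoref{thm:eqv}, $I_{s,k}$ is $(C\eps_s/\delta_s,\delta_s)$-chaotic with $\eps_s\asymp (sr_s^d)^{-1}$. The sets $A^t$ built from $g_2$ coincide with $\mathcal N_k^t$, and $g_2$ is bounded above and below on its support. The argument of \autoref{cor:var} and \autoref{rem:GenChaosSupEq}, applied to the $g_2$-part, should then transfer the chaos to $\mathcal N_k$, with $T_1$ and $T_3$ controlled by Step 3 and \autoref{lem:T_3order}. This yields (iii) with the constant determined by $\eps_s/\delta_s$.
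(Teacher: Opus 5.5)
Your Steps 1--2 follow the paper, and the near-pair obstacle you flag is settled there more simply: for $\|x-y\|\le r_s$ one drops $f_s(y,\cdot)$ and uses that $x$ can then have at most $k-2$ points of $\eta_s$ in its ball, which offsets $\lambda_s(B(x,r_s))\asymp sr_s^d$.

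The genuine gap is in Step 3. Using only the first (Poincar\'e-type) assertion of Theorem~\ref{thm:L1-L2}, you get $T_1\lesssim\mu_s$. That is the same order as your lower bound $T_2\gtrsim\mu_s$. Since Lemma~\ref{lem:T_3order} only gives $T_3\le\sqrt{T_1T_2}$, the identity $\Var(I_{s,k})=T_1+T_2-2T_3$ cannot exclude that $2T_3$ cancels most of $T_2$. Your lower bound $\Var(I_{s,k})\gtrsim T_2$ therefore rests on two claims you do not prove. The first is $\Var(I_{s,k})\gtrsim\mu_s$, which you defer to an unspecified computation or the literature. The second is $T_2\lesssim\mu_s$, for which you offer only a two-configuration heuristic. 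That heuristic would have to control $\E|\mathcal N_k^0\cap\mathcal N_k^t|$ for all small $t$, where it starts at $\E|\mathcal N_k^0|\asymp sr_s^d\mu_s$, not just for $t$ of order one.

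The missing idea is the second, $L^1$--$L^2$ assertion of Theorem~\ref{thm:L1-L2} (Corollary~\ref{cor:bddD1}). It applies because $D_zg_1(y,\eta_s)=D_zf_s(y,\eta_s+\delta_y)\le 0$. Since $-\log\P(|B(y,r_s)\cap\eta_s|\le k-1)\asymp sr_s^d$, it gives $T_1\lesssim\mu_s/(sr_s^d)\ll\mu_s\lesssim T_2$. Corollary~\ref{cor:T3} then yields $T_3\ll T_2$, so $\Var(I_{s,k})\asymp T_2$ at once. No upper bound on $T_2$ and no separate variance lower bound are needed. This is exactly the paper's argument.

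Your own route can also be closed cheaply. In the Mecke expansion $\Var(I_{s,k})=\int\E f_s(x,\eta_s+\delta_x)\,\lambda_s(\dd x)+T_s$, the only negative part of $T_s$ comes from $\|x-y\|\le 2r_s$. It is $\lesssim\mu_s(sr_s^d)^k e^{-\kappa_d sr_s^d}=o(\mu_s)$, so $\Var(I_{s,k})\gtrsim\mu_s$. Then $T_2\le\Var(I_{s,k})+2\sqrt{T_1T_2}$, together with $\Var(I_{s,k}),T_1\lesssim\mu_s$, gives $T_2\lesssim\mu_s$. In either route you still have to verify the hypothesis of Theorem~\ref{thm:Varlb} with $\alpha\asymp sr_s^d$. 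The paper does this by bounding $\P(D_z\bar g_2(y,\eta_s)=\pm1)$ via the Mecke formula.

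Step 4 is not justified as written either. Corollary~\ref{cor:var} and Remark~\ref{rem:GenChaosSupEq} need $D_xF(\mu)D_xF(\mu')$ of constant sign. That fails here, since $D_xI_{s,k}=g_1-g_2$ takes both signs. Lemma~\ref{lem:T_3order} controls only $t$-integrated quantities, whereas chaos is a statement for each $t\ge\delta_s$.

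The paper does not pass through functional chaos at all. Let $\bar g_2:=\mathbf 1\{g_2\ge 1\}$. Then $t\mapsto\E|\mathcal N_k^0\cap\mathcal N_k^t|=e^{-t}\int\E[\bar g_2(x,\eta_s)\bar g_2(x,\eta_s^t)]\,\lambda_s(\dd x)$ is non-increasing by the Fock expansion. So for $t\ge\delta_s$, (ii) and (i) give $\delta_s\E|\mathcal N_k^0\cap\mathcal N_k^t|\le\int_0^{\delta_s}\E|\mathcal N_k^0\cap\mathcal N_k^u|\,\dd u\lesssim\Var(I_{s,k})\lesssim(sr_s^d)^{-1}\int\E(D_xI_{s,k})^2\,\lambda_s(\dd x)\lesssim(sr_s^d)^{-1}\E|\mathcal N_k|$. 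The last step uses $g_2\asymp\bar g_2$, $\E g_1^2\ll\E\bar g_2^2$ and Mecke.

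This gives the rate $(sr_s^d\delta_s)^{-1}$ you aim for, which is also what the paper's proof establishes. The $\delta_s(sr_s^d)^{-1}$ in the displayed statement appears to be a typo. Alternatively, your route could be rescued by a fixed-$t$ Cauchy--Schwarz in the chaos expansion: $\int\E[g_1(x,\eta_s)g_2(x,\eta_s^t)]\,\lambda_s(\dd x)\le(A_1(t)A_2(t))^{1/2}$, with $A_i(t):=\int\E[g_i(x,\eta_s)g_i(x,\eta_s^t)]\,\lambda_s(\dd x)$, combined with $A_1(t)\le A_1(0)\lesssim\mu_s$. That step is absent from your proposal.
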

\begin{proof}[\underline{Proof of (i)}]
We prove superconcentration for $I_{s,k}(\eta_s)$ using \autoref{thm:supcon_sos}. To apply the result, we must verify that the scores $f_s(x,\eta_s) = \mathds{1}(|B(x,r_s)\cap\eta_s|\le k)$ satisfy its assumptions. We first consider \ref{A1}. 
When calculating $\E [f_s(x,\eta_s+\delta_x+\delta_y)f_s(y,\eta_s+\delta_x+\delta_y)]$ we consider the three cases (a) $ \|x-y\| > 2r_s$, (b) $\|x-y\| \in (r_s, 2r_s]$ and (c) $\|x-y\| \le r_s$ separately. 
 In case (a), we see $$
\E [f_s(x,\eta_s+\delta_x+\delta_y)f_s(y,\eta_s+\delta_x+\delta_y)] = \E [f_s(x,\eta_s+\delta_x)]\E [f_s(y,\eta_s+\delta_y)],
$$
and thus the contribution from such points $x,y$ to the integral $T_s$ in \ref{A1} is zero.

In case (b), by conditioning on the number of points $0 \le n \le k-1$ lying in $B(x,r_s)\cap B(y,r_s)$, using the independence of Poisson process over disjoint sets, we obtain
\begin{align*}
        \E [f_s(x,\eta_s+\delta_x+\delta_y)&f_s(y,\eta_s+\delta_x+\delta_y)]\\&=\sum\limits_{n=0}^{k-1}\left(e^{-\omega \kappa_dsr_s^d}\frac{(\omega \kappa_dsr_s^d)^n}{n!}\right)\left(\left(e^{-(1-\omega)\kappa_dsr_s^d}\sum\limits_{m=0}^{k-1-n}\Big(\frac{((1-\omega)\kappa_dsr_s^d)^{m}}{m!}\Big)\right)^2\right),
\end{align*}
where $\omega \in (0,1/2)$ represents the proportion of either ball that lies within $B(x,r_s)\cap B(y,r_s)$. This expression considers the probability of $0 \le n \le k-1$ points lying in $B(x,r_s)\cap B(y,r_s)$, then finds the conditional probability of at most $k-1-n$ points lying in the remaining space in each of $B(x,r_s)$ and $B(y,r_s)$. 
Since $sr_s^d \to \infty$ as $s \to \infty$, this yields that there exists a constant $C_1 \in (0,\infty)$ depending only on $d, k$ such that
\begin{align*}
    \E [f_s(x,\eta_s+\delta_x+\delta_y)f_s(y,\eta_s+\delta_x+\delta_y)] \le C_1 e^{-(2-\omega) \kappa_dsr_s^d} (sr_s^d)^{2(k-1)} \le C_1 e^{-1.5 \kappa_dsr_s^d} (sr_s^d)^{2(k-1)}.
\end{align*}
Thus, using $\E [f_s(x,\eta_s+\delta_x)]\E [f_s(y,\eta_s+\delta_y)] \ge 0$ and that $\lim_{s \to \infty}(sr_s^d)^{k} e^{-0.5 \kappa_d s r_s^d} = 0$, we obtain
\begin{align*}
   &\int_{\mathbb{T}}\int_{B(x,2r_s) \setminus B(x,r_s)} \big(\E [f_s(x,\eta_s+\delta_x+\delta_y)f_s(y,\eta_s+\delta_x+\delta_y)] \\
   &\qquad \qquad \qquad - \E [f_s(x,\eta_s+\delta_x)]\E [f_s(y,\eta_s+\delta_y)]\big)\, \lambda_s^2 (\dd y, \dd x) \\
   &\le C_1 (2^d - 1) \kappa_d s^2r_s^d e^{-1.5\kappa_dsr_s^d}(sr_s^d)^{2(k-1)} \\
   &\ll s (sr_s^d)^{k-1}e^{-\kappa_dsr_s^d} \asymp \int_{\mathbb{T}} \E[f_s(x,\eta_s+\delta_x)^2]\, \lambda_s(\dd x).
\end{align*}

Finally for case (c), again using $\E [f_s(x,\eta_s+\delta_x)]\E [f_s(y,\eta_s+\delta_y)] \ge 0$, stationarity of $\eta_s$ yields
\begin{align*}
	&\int_{\mathbb{T}}\int_{B(x,r_s)} \big(\E [f_s(x,\eta_s+\delta_x+\delta_y)f_s(y,\eta_s+\delta_x+\delta_y)] - \E [f_s(x,\eta_s+\delta_x)]\E [f_s(y,\eta_s+\delta_y)]\big)\, \lambda_s^2 (\dd y, \dd x)\\
   & \le s \int_{B(0,r_s)} \E[f_s(0,\eta+\delta_{0}+\delta_{y})f_s(y,\eta+\delta_{0}+\delta_{y})] \, \lambda_s(\dd y)\\
      & \le s \int_{B(0,r_s)} \E[f_s(0,\eta+\delta_{0}+\delta_{y})] \, \lambda_s(\dd y)\\
    &= \kappa_d s^2 r_s^d \, \P( |B(0,r_s) \cap \eta_s| \le k-2) \\
    &=\kappa_d s^2r_s^d\, e^{-\kappa_dsr_s^d}\sum\limits^{k-2}_{n=0}\frac{(\kappa_dsr_s^d)^n}{n!}\\
    & \asymp s \left((sr_s^d)^{k-1} e^{-\kappa_dsr_s^d}\right) \asymp \int_{\mathbb{T}} \E[f_s(x,\eta_s+\delta_x)^2]\, \lambda_s(\dd x),
\end{align*}
where the final step is due to \eqref{eq:rhsA1}. Combining the bounds for the three cases (a) -- (c), we obtain
$$
T_s \le C_2 \int_{\mathbb{T}} \E[f_s(x,\eta_s+\delta_x)^2]\, \lambda_s(\dd x)
$$
for some constant $C_2 \in (0,\infty)$ depending on $d, k$, confirming \ref{A1}.
\medskip

Next, we verify \ref{A2}. By \eqref{eq:k-iso exp}, we know for $y \in \mathbb{T}$,
\begin{equation}\label{eq:A2check}
	\E[f_s(y,\eta_s+\delta_y)^2] \asymp (sr_s^d)^{k-1} e^{-\kappa_dsr_s^d}.
\end{equation}
Further, we observe $D_yf_s(x,\eta_s) \in \{-1,0\}$, since an additional point may reduce the $k$-NN distance of another point, but not increase it. This implies $$
\E\left[\Big(\sum\limits_{x\in \eta_s}D_yf_s(x,\eta_s)\Big)^2\right]\ge \E\left[-\sum\limits_{x\in \eta_s}D_yf_s(x,\eta_s)\right].
$$ 
With the use of the Mecke formula \eqref{eq:Mecke} and noting that $D_yf_s(x,\eta_s) = -1$ exactly when $|B(x,r_s) \cap \eta_s| = k-1$ and $y \in B(x,r_s)$, we see that
\begin{align*}
	\E\left[\Big(\sum\limits_{x\in \eta_s}D_yf_s(x,\eta_s)\Big)^2\right] &\ge
    -\int_{\mathbb{T}} \E[D_yf_s (x,\eta_s+\delta_x)]\, \lambda_s(\dd x)\\
    & = s\int_{B(y,r_s)} \P(|B(x,r_s) \cap \eta_s| = k-1) \, \dd x\\
    &=\kappa_dsr_s^d\left(\frac{(\kappa_dsr_s^d)^{k-1}}{(k-1)!}\right)e^{-\kappa_dsr_s^d}\asymp (sr_s^d)^ke^{-\kappa_dsr_s^d}.
\end{align*}
Thus \eqref{eq:A2check} confirms condition \ref{A2} holds with $\varepsilon_s \asymp (sr_s^d)^{-1}$. The superconcentration is now an immediate consequence of \autoref{thm:supcon_sos} noting that $(1-\sqrt{\eps_s})^{-2} = \Omega(1)$.
\medskip

\noindent\underline{\em Proof of (ii)}. Next we prove the second assertion. We employ results from Section \ref{Sec:VarEst}. We decompose $D_xI_{s,k}$ as in \eqref{eq:Dxdecomp} and the associated variance as in \eqref{eq:vardecomp}. We set $g_1(y,\eta_s)=f_s(y,\eta_s+\delta_y)$ and $g_2(y,\eta_s)=-\sum\limits_{x\in\eta_s}D_yf_s(x,\eta_s)$. For convenience, instead of $g_2(y,\eta_s)$ we use
\begin{align*}
    \bar g_2(y,\eta_s)&=\mathds{1}(\exists x \in \eta_s \cap B(y,r_s) : |\eta_s\cap B(x,r_s)|=k)\asymp g_2(y,\eta_s) =\hspace{-.2cm} \sum_{x \in \eta_s \cap B(y,r_s)} \mathds{1}(|\eta_s\cap B(x,r_s)|=k),
\end{align*}
which holds since $g_2$ is bounded above by a finite positive integer depending only on $d, k$.
Following the structure of \eqref{eq:AxBx}, we define the specific events associated with our geometric graph
\begin{align*}
    M_y:=\{\eta_s:|B(y,r_s)|\cap\eta_s < k\}, \quad N_y:=\{\eta_s:\exists x \in \eta_s \cap B(y,r_s) \text{ with } |\eta_s\cap B(x,r_s)|=k\},
\end{align*}
so that $g_1(y,\eta_s) = \mathds{1}(M_y)$ and $\bar g_2(y,\eta_s)\asymp \bar g_2(y,\eta_s) = \mathds{1}(N_y)$. Similarly as in \eqref{eq:At}, define the sets
\begin{gather*}
\mathcal{M}_k^t=\{y\in\eta_s^t : |B(y,r_s)|\cap\eta_s^t \le k\}, \\
    \mathcal{N}_k^t=\{y\in \eta_s^t:\exists x \in \eta_s^t \cap B(y,r_s) \setminus \{y\} \text{ with } |\eta_s^t \cap B(x,r_s)|=k+1\}.
\end{gather*}
With this and \eqref{eq:vardecomp}, we obtain
\begin{equation}\label{eq:IsVarDecomp}
	\Var(I_{s,k})\asymp \int_0^\infty\mathbb{E}|\mathcal{M}_k^0\cap \mathcal{M}_k^t| \dd t+\int_0^\infty\mathbb{E}|\mathcal{N}_k^0\cap \mathcal{N}_k^t| \dd t - 2\int_0^\infty\mathbb{E}|\mathcal{M}_k^0\cap \mathcal{N}_k^t| \dd t.
\end{equation}
We will find an estimate for $\Var(I_{s,k})$ by first estimating $\int_0^\infty\mathbb{E}|\mathcal{M}_k^0\cap \mathcal{M}_k^t| \dd t$ using \autoref{cor:bddD1}, then estimating $\int_0^\infty\mathbb{E}|\mathcal{N}_k^0\cap \mathcal{N}_k^t| \dd t$ using \autoref{cor:bddD2}, and finally applying \autoref{cor:T3} to show that $\int_0^\infty\mathbb{E}|\mathcal{M}_k^0\cap \mathcal{N}_k^t| \dd t$ is asymptotically negligible compared to the other two summands in \eqref{eq:IsVarDecomp}.
\medskip

Beginning with $\int_0^\infty\mathbb{E}|\mathcal{M}_k^0\cap \mathcal{M}_k^t| \dd t$, we know $g_1(y,\eta_s)\asymp \mathds{1}(M_y)$ and thus it takes the required form for \autoref{cor:bddD1}. In addition, $D_zg_1(y,\eta_s)=D_zf_s(y,\eta_s+\delta_y)\leq0$ for all $y,z \in \mathbb{T}$, and hence the requirement for the second assertion of the corollary is fulfilled. Therefore, arguing as in \eqref{eq:k-iso exp} and using $sr_s^d \to \infty$ as $s \to \infty$ for the final step, we obtain
\begin{align}\label{eq:KisoVarA}
    \int^\infty_0 \mathbb{E} |\mathcal{M}_k^0\cap \mathcal{M}_k^t| \dd t&\leq 2 \int_{\mathbb{T}} \frac{\P(B(y,r_s)\cap\eta_s<k)}{1- (1/4) \log (\P(B(y,r_s)\cap\eta_s<k))}  \lambda_s(\dd y) \nonumber\\
    &\asymp s \frac{s^{k-1}r_s^{d(k-1)}e^{-\kappa_dsr_s^d}}{1- (1/4) \log (s^{k-1}r_s^{d(k-1)}e^{-\kappa_dsr_s^d})}\asymp \frac{s^kr_s^{d(k-1)}e^{-\kappa_dsr_s^d}}{sr_s^d}.
\end{align}

For $\int_0^\infty\mathbb{E}|\mathcal{N}_k^0\cap \mathcal{N}_k^t| \dd t$, note that we have the correct form of function for \autoref{cor:bddD2} as $\bar g_2(y,\eta_s) = \mathds{1}(N_y)$. The other pre-requisite is to show
\begin{align}\label{eq:TrappKIso}
		\mathbb{E} \left[\int_{\mathbb{T}^2} (D_z \bar g_2(y,\eta_s))^2 \lambda_s^2(\dd y,\dd z)\right] \le \alpha  \int_{\mathbb{T}} \P(N_y)\, \lambda_s(\dd y)
\end{align}
for some $\alpha > 0$. Let us proceed with a suitable estimate for $(D_z\bar g_2(y,\eta_s))^2=|D_z\bar g_2(y,\eta_s)|$. Observe that 
\begin{align*}
    D_z\bar g_2(y,\eta_s)=1 \iff &(\not\exists x\in \eta_s\cap B(y,r_s) :|\eta_s\cap B(x,r_s)|=k) \\ &\wedge \big((z\in B(y,r_s)\wedge|\eta_s\cap B(z,r_s)|=k-1)\\&\quad\vee (\exists x\in\eta_s\cap B(y,r_s)\cap B(z,r_s):|\eta_s\cap B(x,r_s)|=k-1 )\big).
\end{align*}
By removing the first condition, we can upper bound its probability as
\begin{align}\label{eq:TrappKIso1}
    \P(D_z\bar g_2(y,\eta_s)=1) 
    &\leq \P(z\in B(y,r_s)\wedge|\eta_s\cap B(z,r_s)|=k-1)) \nonumber\\
    &\qquad + \P(\exists x\in\eta_s\cap B(y,r_s)\cap B(z,r_s):|\eta_s\cap B(x,r_s)|=k-1)\nonumber\\
    &\asymp s^{k-1}r_s^{d(k-1)}e^{-\kappa_dsr_s^d}(\mathds{1}(z\in B(y,r_s)), \nonumber\\
    & +  \P(\exists x\in\eta_s\cap B(y,r_s)\cap B(z,r_s):|\eta_s\cap B(x,r_s)|=k-1),
\end{align}
where the first probability is computed in the final step similarly as \eqref{eq:k-iso exp}. For the second probability in \eqref{eq:TrappKIso1}, we argue noting the finiteness of the set of all $x \in \eta_s\cap B(y,r_s)\cap B(z,r_s)$ with $|\eta_s\cap B(x,r_s)|=k-1$ to obtain
\begin{align}\label{eq:TrappKIso2}
&\P(\exists x\in\eta_s\cap B(y,r_s)\cap B(z,r_s):|\eta_s\cap B(x,r_s)|=k-1) \nonumber\\
& \asymp \E \sum_{x \in \eta_s \cap B(y,r_s)\cap B(z,r_s)} \mathds{1}(|\eta_s\cap B(x,r_s)|=k-1) \nonumber\\
& = s \int_{B(y,r_s)\cap B(z,r_s)} \P(|\eta_s\cap B(x,r_s)| = k-2)\, \dd x \nonumber\\
&\asymp s\operatorname{Vol}(B(y,r_s)\cap B(z,r_s))(s^{k-2}r_s^{d(k-2)}e^{-\kappa_dsr_s^d}),
\end{align}
where the second step is due to the Mecke formula in \eqref{eq:Mecke}. Arguing similarly we also have
\begin{align}\label{eq:TrappKIso3}
     \P(D_z\bar g_2(y,\eta_s)=-1)&=\P((\exists x\in\eta_s\cap B(y,r_s):|\eta_s\cap B(x,r_s)|=k) \nonumber\\
     &\qquad \wedge (z\in B(x,r_s)\forall x\in\eta_s\cap B(y,r_s):|\eta_s\cap B(x,r_s)|=k) \nonumber\\
     &\qquad \wedge \big( (z\in B(y,r_s)\wedge |\eta_s\cap B(z,r_s)|\not=k-1)\vee (z\not\in B(y,r_s)) \big) \nonumber\\
     &\leq\P((\exists x\in\eta_s\cap B(y,r_s):|\eta_s\cap B(x,r_s)|=k) \nonumber\\
     &\qquad \wedge (z\in B(x,r_s)\forall x\in\eta_s\cap B(y,r_s):|\eta_s\cap B(x,r_s)|=k) \nonumber\\
     &\leq\P(\exists x\in\eta_s\cap B(y,r_s)\cap B(z,r_s):|\eta_s\cap B(x,r_s)|=k)\nonumber\\
	&\asymp s^{k-1}r_s^{d(k-1)}e^{-\kappa_dsr_s^d}(s\operatorname{Vol}(B(y,r_s)\cap B(z,r_s))),
\end{align}
where the final step is argued similarly as in \eqref{eq:TrappKIso2}. Combining \eqref{eq:TrappKIso1}, \eqref{eq:TrappKIso2} and \eqref{eq:TrappKIso3}, and using $sr_s^d \to \infty$ as $s\to \infty$, we may calculate an upper bound for the left-hand side of (\ref{eq:TrappKIso}) as
\begin{align*}
    &\mathbb{E} \left[\int_{\mathbb{T}^2} (D_z \bar g_2(y,\eta_s))^2 \,\lambda_s^2(\dd y,\dd z)\right]\\
    &\lesssim \int_{\mathbb{T}^2} s^{k-1}r_s^{d(k-1)}e^{-\kappa_dsr_s^d}(s\operatorname{Vol}(B(y,r_s)\cap B(z,r_s))) \,\lambda_s^2(\dd y,\dd z) \\
    &\qquad +\int_{\mathbb{T}^2} s^{k-1}r_s^{d(k-1)}e^{-\kappa_dsr_s^d}\mathds{1}(z\in B(y,r_s))\, \lambda_s^2(\dd y,\dd z) \\
    & \lesssim  \int_{\mathbb{T}}\int_{(B(y,2r_s)}s^{k+2}r_s^{dk}e^{-\kappa_dsr_s^d}\dd y\dd z+\int_{\mathbb{T}}\int_{(B(y,r_s)}s^{k+1}r_s^{d(k-1)}e^{-\kappa_dsr_s^d} \dd y\dd z \\
    &\asymp\int_{\mathbb{T}}s^{k+2}r_s^{d(k+1)}e^{-\kappa_dsr_s^d}\dd y+\int_{\mathbb{T}}s^{k+1}r_s^{dk}e^{-\kappa_dsr_s^d} \dd y \\
    &=s^{k+2}r_s^{d(k+1)}e^{-\kappa_dsr_s^d}+s^{k+1}r_s^{dk}e^{-\kappa_dsr_s^d}.
\end{align*}
As for the right-hand side of (\ref{eq:TrappKIso}), arguing as for \eqref{eq:TrappKIso2}, we obtain
\begin{align}\label{eq:Eg2}
    \P(N_y) &=\P(\exists x \in \eta_s \cap B(y,r_s) : |\eta_s\cap B(x,r_s)|=k) \nonumber\\
    &\asymp s^{k-1}r_s^{d(k-1)}e^{-\kappa_dsr_s^d}(s \operatorname{Vol}(B(y,r_s))) = (sr_s^d) s^{k-1}r_s^{d(k-1)}e^{-\kappa_dsr_s^d},
\end{align}
so that
\begin{align*}
    \int_{\mathbb{T}} \P(N_y) \, \lambda_s(\dd y)&\asymp s s^{k}r_s^{dk}e^{-\kappa_dsr_s^d} = s^{k+1}r_s^{dk}e^{-\kappa_dsr_s^d}.
\end{align*}
Comparing the orders obtained for $\int_{\mathbb{T}^2} \mathbb{E}[(D_y\bar g_2(x,\eta_s))^2]\,\lambda^2(\dd x,\dd y)$ and $\int_{\mathbb{T}} P(N_y)\,\lambda(\dd x)$, we find that \eqref{eq:TrappKIso} is satisfied with $\alpha=C_3 \,sr_s^d$ for some constant $C_3 \in (0,\infty)$. With this, the requirement for \autoref{cor:bddD2} is fulfilled by $\bar g_2(y,\eta_s)$, and its application yields
\begin{align}\label{eq:KIsoVarB}
    \int^\infty_0 \mathbb{E} |\mathcal{N}_k^0\cap \mathcal{N}_k^t| \dd t &\geq \frac{1}{C_3 sr_s^d+1} \int_{\mathbb{T}} \P(N_y) \, \lambda(\dd x) \nonumber\\
    &\asymp \frac{s^{k+1}r_s^{dk}e^{-\kappa_dsr_s^d}}{sr_s^d}=s^{k}r_s^{d(k-1)}e^{-\kappa_dsr_s^d}.
\end{align}
Comparing \eqref{eq:KisoVarA} and \eqref{eq:KIsoVarB}, we obtain
\begin{align*}
    \int^\infty_0 \mathbb{E} |\mathcal{M}_k^0\cap \mathcal{M}_k^t|\dd t \lesssim \frac{s^kr_s^{d(k-1)}e^{-\kappa_dsr_s^d}}{sr_s^d }\ll s^{k}r_s^{d(k-1)}e^{-\kappa_dsr_s^d} \lesssim \int^\infty_0 \mathbb{E} |\mathcal{N}_k^0\cap \mathcal{N}_k^t| \dd t.
\end{align*}
The second claim now follows from \eqref{eq:IsVarDecomp} upon application of \autoref{cor:T3}.

\medskip

\noindent \underline{\em Proof of (iii)}. For the final claim, we need to show that there exists a constant $C' \in (0,\infty)$ such that for any $\delta_s>0$ with $\delta_s \to 0$ and $sr_s^d \delta_s \to \infty$ as $s \to \infty$, and all $s>0$, $u \ge \delta_s>0$,
\begin{equation*}
	e^{-u}\,\E |\mathcal{N}_k^0 \cap \mathcal{N}_k^u|  \le C' (sr_s^d \delta_s)^{-1} \E |\mathcal{N}_k^0|,
\end{equation*}
Assume that it is false, that is, for each $n \in \N$, there exists a function $\delta^{(n)}_s>0$ in $s$ with $\delta^{(n)}_s \to 0$ and $sr_s^d \delta^{(n)}_s \to \infty$ as $s \to \infty$, some $s_n>0$ and $u_n >\delta^{(n)}_{s_n}$ such that
\begin{equation}\label{eq:assump}
	s_nr_{s_n}^d \delta^{(n)}_{s_n} e^{-u_n} \frac{\E |\mathcal{N}_k^0 \cap \mathcal{N}_k^{u_n}| }{\E |\mathcal{N}_k^0|} >n.
\end{equation}
Since $I_{s,k}$ is $\left(C \,(sr_s^d)^{-1} \right)$-superconcentrated by the first assertion, we have
\begin{align}\label{eq:contra}
s_nr_{s_n}^d \Var(I_{s_n,k}) &\lesssim \int_{\mathbb{T}}\E (D_y I_{s,k})^2 \lambda_s(\dd y) \nonumber\\
&\lesssim \int_{\mathbb{T}} (\E g_1(y,\eta_s)^2 + \E \bar g_2(y,\eta_s)^2) \,  \lambda_s(\dd y)
\lesssim \int_{\mathbb{T}} \E \bar g_2(y,\eta_s)^2 \, \lambda_s(\dd y) \nonumber \\
&=\E\sum_{y \in \eta_s} \mathds{1}(\exists x \in \eta_s \cap B(y,r_s) \setminus \{y\}: |\eta_s\cap B(x,r_s)|=k+1) = \E|\mathcal{N}_k^0|,
\end{align}
where for the third step, we have used $\E g_1^2 \ll \E \bar g_2^2$ which follows from \eqref{eq:k-iso exp} and \eqref{eq:Eg2}, and the penultimate step is an application of the Mecke formula in \eqref{eq:Mecke}. On the other hand, from the second assertion, we obtain
\begin{equation*}
\Var(I_{s_n,k}) \asymp \int_0^\infty \E |\mathcal{N}_k^0 \cap \mathcal{N}_k^t|  \dd t \ge \int_0^{u_n} \E |\mathcal{N}_k^0 \cap \mathcal{N}_k^t|  \dd t.
\end{equation*}
Now arguing as in the proof of \autoref{lem:intermed}, note that
$$
   \E |\mathcal{N}_k^0 \cap \mathcal{N}_k^t|= e^{-t}\int_{\mathbb{T}} \E \bar g_2(x,\eta_s) \bar g_2(x, \eta_s^t) \lambda_s(\dd x).
$$
By a Fock space expansion of $\bar g_2 \in L^2_{\eta_s}$, one can show then that $\E |\mathcal{N}_k^0 \cap \mathcal{N}_k^t|$ is non-increasing in $t$ (see proof of \autoref{thm:eqv} in Section \ref{sec:chaosproof} for a similar argument). Thus, we have $\E |\mathcal{N}_k^0 \cap \mathcal{N}_k^t| \ge \E |\mathcal{N}_k^0 \cap \mathcal{N}_k^{u_n}|$ for $t \in [0,u_n]$. Plugging in the variance lower bound above, from \eqref{eq:assump} we see
$$
s_nr_{s_n}^d \Var(I_{s_n,k}) \gtrsim s_nr_{s_n}^d \int_0^{u_n} \E |\mathcal{N}_k^0 \cap \mathcal{N}_k^t|  \dd t \ge s_nr_{s_n}^d \delta^{(n)}_{s_n}  \E |\mathcal{N}_k^0 \cap \mathcal{N}_k^{u_n}|>n \E |\mathcal{N}_k^0|,
$$
which contradicts \eqref{eq:contra}. This concludes the proof.
\end{proof}

\subsection{$\Gamma$-components in random geometric graphs}\label{sec:isographs}

Consider a random geometric graph $\operatorname{RGG}(\eta_s,r_s)$ with cut-off radius $r_s$ and vertices given by a Poisson point process $\eta_s$ on the $d$-dimensional torus $\mathbb{T}=[0,1]^d/ \sim$, $d \in \N$, with intensity $s>0$ as in Section \ref{sec:isovert}, that is, $\lambda_s(dx) = s \,\dd x$. Let $\Gamma$ be a fixed connected graph on $k$ vertices, $k\geq2$. In this example we shall consider the number of $\Gamma$-components in $\operatorname{RGG}(\eta_s,r_s)$. 
A {\em $\Gamma$-component} is a subgraph of $\operatorname{RGG}(\eta_s,r_s)$ that is isomorphic to $\Gamma$, and is disconnected from the rest of $\eta_s$ (that is the component is `isolated'); see also \cite[Chapter 3]{Penrose} for further discussions, including limit theorems in the sparse ($sr_s^d \to 0$ as $s \to \infty$) and thermodynamic ($sr_s^d$ converges to a constant as $s \to \infty$) regimes. As in Section \ref{sec:isovert}, we consider the dense regime instead, where $sr_s^d$ diverges to infinity. Since all graphs are not feasible as a subgraph of $\operatorname{RGG}(\eta_s,r_s)$ (for instance, a star-shaped graph, with a sufficiently large number of leaf nodes), we assume the component $\Gamma$ we are investigating is feasible in the given setting (see also \cite[Chapter 3]{Penrose}). Let $J_s(\Gamma)$ denote the number of $\Gamma$-components of $\operatorname{RGG}(\eta_s,r_s)$; it can be expressed as a sum of scores with the score function given by $f_s(x_{1:k},\eta_s)/k!$ where
\begin{equation}\label{eq:graphscore}
    f_s(x_{1:k},\eta_s)\equiv f_s(x_1,\dots,x_k,\eta_s)=\mathds{1}(x_{1:k} \cong \Gamma \wedge d(x_{1:k},\eta_s\setminus x_{1:k})> r_s)
\end{equation}
for $x_1,\dots,x_k \in \eta_s$ distinct. Here $x_{a:b}=\{x_a,\dots,x_b\}$, the symbol $\cong$ stands for graph isomorphism, and for two finite point collections $X, Y \subset \mathbb{T}$, we write 
$$
d(X,Y):= \min_{x\in X, y \in Y} \|x-y\|.
$$
Moving forward, we will also denote $\eta_s^X = \eta_s + \delta_X$ for any finite set $X \subset \mathbb{T}$, and write $\eta_s^y\equiv \eta_s^{\{y\}}$, $y \in \mathbb{T}$ for notational simplicity. For $k \in \N$, and a locally finite point configuration $\mu$ in $\mathbb{T}$, the set $\mu^{\neq,k}$ stands for the collection of all ordered $k$-tuples of distinct points from $\mu$, that is, 
$$
\mu^{\neq,k}:=\{(x_1,\dots,x_k) : x_1, \hdots,x_k \in \mu, x_i \neq x_j \text{ for $i \neq j$}\}.
$$ 

\begin{theorem}\label{Thm:GammaComps}
	 Let $s >0$ and $r_s \in (0,1/2)$ be such that $sr_s^d \to \infty$ as $s \to \infty$. For $k \in \N$ and a fixed feasible connected graph $\Gamma$ on $k$-vertices, denote the number of $\Gamma$-components in $RGG(\eta_s,r_s)$ by
	 $$
	 J_s(\Gamma)=\frac{1}{k!}\sum_{(x_1,\dots,x_k)\in\eta_s^{\neq,k}}f_s(x_{1:k},\eta_s).
	 $$
	 Then
     $J_s$ is $C\, (sr_s^d)^{-1}$-superconcentrated for some constant $C \in (0,\infty)$ depending only on $d$ and $k$, that is,
	$$
	\Var(J_s(\eta_s)) \le \frac{C}{sr_s^d} \int_{\mathbb{T}} \E (D_x J_s(\eta_s))^2 \, \lambda_s(\dd x).
	$$ 
\end{theorem}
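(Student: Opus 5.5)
Since $\Var(J_s)\le 1$ is not available here, I will not imitate the percolation argument. Instead I will follow the proof of \autoref{Thm:KIso}(i). The score is now a $k$-point function, so \autoref{thm:supcon_sos} does not apply verbatim. I will therefore redo its short Mecke-formula argument for $U$-statistic type scores, bounding $\Var(J_s)$ above and $\E\int (D_yJ_s)^2\,\lambda_s(\dd y)$ below, and compare the two.

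\emph{Upper bound on the variance.} The multivariate Mecke formula expands $\E J_s^2$ as a sum over pairs of $k$-tuples sharing $j\in\{0,\dots,k\}$ points. The $j=k$ term is $\E J_s$. For $1\le j<k$, two distinct $\Gamma$-components cannot share a vertex, so the product of indicators vanishes. For $j=0$, if the two tuples are at distance $>2r_s$ the contributions factorise and cancel against $(\E J_s)^2$. If they are at distance $\le r_s$, the isolation requirement forces the product to vanish, since a point of one tuple lies within $r_s$ of the other. It remains to handle tuples at distance in $(r_s,2r_s]$. Their joint isolation costs a vacancy of the union of the two $r_s$-neighbourhoods. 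As in case (b) of \autoref{Thm:KIso}, this probability is at most $e^{-(1+c)\,s\,\mathrm{Vol}(B(X,r_s))}$-type times polynomial factors. Comparing with $\E J_s$, this piece is negligible, because the extra exponential factor beats the polynomial volume factor $s\,r_s^{d}$. Hence $\Var(J_s)\lesssim \E J_s$.

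\emph{Lower bound on the Poincar\'e side.} Write $D_yJ_s = g_1(y)-g_2(y)$. Here $g_1$ counts new $\Gamma$-components containing $y$, and $g_2\ge 0$ counts existing $\Gamma$-components destroyed because $y$ falls within distance $r_s$ of them. Note that $y$ cannot both create one component and destroy another while being isolated from it. By an argument as in \ref{A2}, it suffices to bound $\E\int g_2(y)\,\lambda_s(\dd y)$ from below. Since $g_2$ is integer valued and $g_1g_2$ is negligible, we have $\E(D_yJ_s)^2\ge \E g_2(y)^2-2\E g_1g_2\gtrsim \E g_2(y)$. By Mecke,
\[
\int\E g_2(y)\,\lambda_s(\dd y)\ \ge\ \frac{1}{k!}\,\E\sum_{x_{1:k}} f_s(x_{1:k},\eta_s)\, s\,\mathrm{Vol}\big(B(x_{1:k},r_s)\big)\ \ge\ \kappa_d\, s r_s^d\,\E J_s,
\]
since the $r_s$-neighbourhood of a component contains a ball of radius $r_s$. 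A small point needs care here: a component can be destroyed by adding $y$ only if $y$ does not simultaneously create a bigger $\Gamma$-component, which is impossible because the size changes. So every $y$ in the neighbourhood destroys it.

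\emph{Conclusion and main obstacle.} Combining the two bounds gives
\[
\Var(J_s)\lesssim \E J_s\lesssim (sr_s^d)^{-1}\int\E(D_yJ_s)^2\,\lambda_s(\dd y),
\]
which is the claim. The main obstacle is the $j=0$, $(r_s,2r_s]$ term. There I need a uniform geometric estimate: for two isolated feasible configurations $X,Y$ with $r_s< d(X,Y)\le 2r_s$, the quantity $\mathrm{Vol}(B(X,r_s)\cup B(Y,r_s))-\mathrm{Vol}(B(X,r_s))$ must be at least $c\,r_s^d$. This should follow since $B(Y,r_s)$ contains a ball of radius $r_s$ around a point of $Y$, whose intersection with $B(X,r_s)$ is at most a fixed fraction of it. I would try to prove this by picking the point of $Y$ farthest from $X$ in a fixed direction. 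Integrating over the positions of $Y$ then gives a polynomial factor times $e^{-c\,s r_s^d}$, which is $o(1)$ relative to $\E J_s$.
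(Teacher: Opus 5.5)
Your plan follows the paper's proof closely. You use the multivariate Mecke expansion with the regimes $d(X,Y)>2r_s$, $d(X,Y)\le r_s$ and $d(X,Y)\in(r_s,2r_s]$. You then bound the Poincar\'e side from below via the components destroyed by the added point. Your remark that $g_1(y)g_2(y)=0$ identically is correct: a point within $r_s$ of an existing component cannot belong to a $k$-vertex component. This is cleaner than the paper's Cauchy--Schwarz step.

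\textbf{The gap.} It lies in the step you flag as the main obstacle. You assert, with the roles of $X$ and $Y$ fixed, that $\operatorname{Vol}(B(Y,r_s)\setminus B(X,r_s))\ge c\,r_s^d$ for every pair. You propose to prove this from a point of $Y$ that is extreme in some direction. That works only if the point is extreme for the \emph{combined} set $X\cup Y$. In that case, the part of $B(y^*,r_s)$ beyond a supporting hyperplane of $\operatorname{conv}(X\cup Y)$ at $y^*$ contains a region of volume $\ge c_d r_s^d$ around the outer normal. No ball $B(x,r_s)$ with $x$ on the other side and $\|x-y^*\|>r_s$ reaches that region.

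However, $\operatorname{conv}(Y)\subseteq\operatorname{conv}(X)$ can occur. For $\Gamma$ a long path, take $X$ an almost closed arc of a large circle and $Y$ a compactly folded copy of the same path inside it, within $2r_s$ of $X$. Then in every direction some point of $X$ lies further out than all of $Y$, and your argument yields nothing. The justification that the ball around a point of $Y$ meets $B(X,r_s)$ in at most a fixed fraction is also false for a badly chosen point. A point surrounded by $d+1$ points at distance slightly above $r_s$ in simplex directions has almost its whole ball covered.

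\textbf{The fix.} Use the case split of \autoref{lem:GammaComps}. Take an extreme point of $\operatorname{conv}(X\cup Y)$. If it lies in $Y$, you get $\operatorname{Vol}(B(X,r_s)\cup B(Y,r_s))\ge\operatorname{Vol}(B(X,r_s))+c\,r_s^d$. If it lies in $X$, you get the same with $X$ and $Y$ exchanged. Hence
\[
e^{-s\operatorname{Vol}(B(X,r_s)\cup B(Y,r_s))}\le e^{-c s r_s^d}\Big(e^{-s\operatorname{Vol}(B(X,r_s))}+e^{-s\operatorname{Vol}(B(Y,r_s))}\Big).
\]
The integral over the regime $(r_s,2r_s]$ is symmetric in $(X,Y)$. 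So it is at most a constant times $e^{-csr_s^d}(sr_s^d)^k\,\E J_s$. Note that the polynomial factor is $(sr_s^d)^k$, coming from integrating the second connected configuration near the first, not $sr_s^d$; it is still beaten by the exponential.

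With this replacement the rest of your argument goes through and gives
\[
\Var(J_s)\lesssim\E J_s\le(\kappa_d sr_s^d)^{-1}\int_{\mathbb{T}}\E(D_yJ_s)^2\,\lambda_s(\dd y).
\]
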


    Note that when counting $\Gamma$-components, we do not count permutations of an ordered $k$-tuple $(x_1, \hdots,x_k)$ that is isomorphic to $\Gamma$, which results in the additional $1/k!$ factor in $J_s$. We also note here by \autoref{thm:eqv}, for any $\delta_s>0$ with $\delta_s \to 0$ and $\delta_s \, sr_s^d\to \infty$ as $s \to \infty$, one obtain that $J_s$ is $(C' (\delta_s \, sr_s^d)^{-1}, \delta_s)$-chaotic for some $C'\in (0,\infty)$. Moreover, carrying out a similar analysis as done in Theorems \ref{thm:crossing} and \ref{Thm:KIso} here should also yield a chaotic set in the context of $\Gamma$-components.

    In the remainder of this section, it will often be more convenient to consider $G_s(\Gamma)=k!J_s(\Gamma)$. We will prove $J_s(\Gamma)$ to be superconcentrated using a similar method to how we proved the superconcentration of the number of vertices with maximum degree less than $k$ in Section \ref{sec:isovert}. Note that we cannot directly apply \autoref{thm:supcon_sos} since $f_s$ in this example is a multivariate score function. It is worth noting that a multivariate version of \autoref{thm:supcon_sos} is possible to prove, however, because of notational complexity of such a result, we have chosen here to prove superconcentration directly. 

We start by estimating the order of the variance of $J_s$. To that end, we require the following lemma, which provides the core estimate needed to verify the multivariate analogue of condition \ref{A1} in \autoref{thm:supcon_sos}. For $k \in \N$ and $s>0$, denote
    \begin{equation*}
        T_{s,k} := \int\limits_{\substack{\mathbb{T}^{k}}}\int\limits_{\substack{\mathbb{T}^{k} \\ d(x_{1:k},x_{k+1:2k})\in(r_s,2r_s]}} \E [f_s(x_{1:k},\eta_s^{x_{1:2k}})f_s(x_{k+1:2k},\eta_s^{x_{1:2k}})]  \, \lambda_s^{k}(\dd x_{1:k}) \lambda_s^{k}(\dd x_{k+1:2k}).
    \end{equation*}

\begin{lemma} \label{lem:GammaComps}
    For $f_s$ as in \eqref{eq:graphscore}, it holds that
    \begin{equation*}
        T_{s,k} \leq C \int_{\mathbb{T}^k}\E[f_s(x_{1:k},\eta_s^{x_{1:k}})^2] \, \lambda_s^k(\dd x_{1:k})
    \end{equation*}
    for some constant $C \in (0,\infty)$ depending only on $d$ and $k$.
\end{lemma}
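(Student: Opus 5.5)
Both sides of the inequality can be computed almost explicitly, and then compared. First we evaluate the right-hand side. For $x_{1:k}$ with $x_{1:k}\cong\Gamma$, the event $f_s(x_{1:k},\eta_s^{x_{1:k}})=1$ means that $\eta_s$ has no point in $U(x_{1:k}):=\bigcup_{i\le k}B(x_i,r_s)$. Hence
\[
\E[f_s(x_{1:k},\eta_s^{x_{1:k}})^2]=\mathds{1}(x_{1:k}\cong\Gamma)\,e^{-s\operatorname{Vol}(U(x_{1:k}))}.
\]
We do not attempt to evaluate the resulting integral exactly. We only keep it as the reference quantity
\[
R_s:=\int_{\mathbb{T}^k}\mathds{1}(x_{1:k}\cong\Gamma)\,e^{-s\operatorname{Vol}(U(x_{1:k}))}\,\lambda_s^k(\dd x_{1:k}).
\]

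Next we treat $T_{s,k}$. For the product inside $T_{s,k}$ to be nonzero, both $k$-tuples must be $\Gamma$-components of $\eta_s^{x_{1:2k}}$. In particular, no point of $x_{k+1:2k}$ may lie within distance $r_s$ of $x_{1:k}$, which is consistent with the constraint $d(x_{1:k},x_{k+1:2k})>r_s$. The product equals $\mathds{1}(x_{1:k}\cong\Gamma)\mathds{1}(x_{k+1:2k}\cong\Gamma)$ times the probability that $\eta_s$ has no point in $U(x_{1:k})\cup U(x_{k+1:2k})$. By inclusion--exclusion this probability is
\[
e^{-s\operatorname{Vol}(U(x_{1:k}))}\,e^{-s\operatorname{Vol}(U(x_{k+1:2k}))}\,e^{s\operatorname{Vol}(U(x_{1:k})\cap U(x_{k+1:2k}))}.
\]
The key geometric step is to find a constant $c\in(0,1)$, depending only on $d$ and $k$, such that the overlap volume is controlled by the second union:
\[
\operatorname{Vol}\big(U(x_{1:k})\cap U(x_{k+1:2k})\big)\le c\,\operatorname{Vol}\big(U(x_{k+1:2k})\big)-\kappa_d r_s^d\,\gamma
\]
for some $\gamma>0$. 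Alternatively, it suffices to show that the non-overlapping part $U(x_{k+1:2k})\setminus U(x_{1:k})$ has volume at least $\gamma\kappa_d r_s^d$. This holds because the tuple $x_{k+1:2k}$ contains a point $x_j$ at distance greater than $r_s$ from every point of $x_{1:k}$. Taking $x_j$ to be an extreme point of $x_{k+1:2k}$ in the direction pointing away from $x_{1:k}$, a fixed fraction of $B(x_j,r_s)$ then lies outside every ball $B(x_i,r_s)$, $i\le k$, since such a ball is centred at distance $>r_s$. I expect this step to be the main obstacle: a cone or half-space argument is needed to make the fraction uniform over all configurations, including on the torus, where $r_s<1/2$ keeps everything local.

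With the geometric bound in hand, the integrand of $T_{s,k}$ is at most
\[
\mathds{1}(x_{1:k}\cong\Gamma)\,e^{-s\operatorname{Vol}(U(x_{1:k}))}\,e^{-\gamma\kappa_d sr_s^d}.
\]
We then integrate out $x_{k+1:2k}$. Since $d(x_{1:k},x_{k+1:2k})\le 2r_s$ and $x_{k+1:2k}\cong\Gamma$ is connected, all points of $x_{k+1:2k}$ lie within distance $2r_s+kr_s$ of $x_{1:k}$. The $\lambda_s^k$-measure of this region is therefore $\lesssim (sr_s^d)^k$. This gives
\[
T_{s,k}\lesssim (sr_s^d)^k e^{-\gamma\kappa_d sr_s^d} R_s\le C R_s,
\]
because $(sr_s^d)^k e^{-\gamma\kappa_d sr_s^d}\to0$ as $sr_s^d\to\infty$. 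For $s$ small, the same bound holds after enlarging $C$, since the prefactor is bounded whenever $sr_s^d$ stays in a compact range. This proves the claim, in close analogy with case (b) in the proof of \autoref{Thm:KIso}.
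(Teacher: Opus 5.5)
Your reduction is sound up to the geometric step, and it matches the paper's proof there: you write the probability as $e^{-s\operatorname{Vol}(U(x_{1:k})\cup U(x_{k+1:2k}))}$, bound it by $e^{-s\operatorname{Vol}(U(x_{1:k}))}e^{-\gamma\kappa_d sr_s^d}$ given a lower bound on $\operatorname{Vol}(U(x_{k+1:2k})\setminus U(x_{1:k}))$, and integrate out $x_{k+1:2k}$ over a set of $\lambda_s^k$-measure $\lesssim (sr_s^d)^k$.

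The gap is that you need this volume bound \emph{one-sidedly and for every} admissible pair, because you always integrate out $x_{k+1:2k}$ against the reference integral in $x_{1:k}$. Your argument does not deliver it.

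\begin{itemize}
\item Being at distance $>r_s$ from every $x_i$, $i\le k$, gives no uniform fraction by itself. In $d=2$, three balls of radius $r_s$ centred at distance $r_s(1+\epsilon)$ from $x_j$, in directions $120^\circ$ apart, cover all of $B(x_j,r_s)$ except a set of area $O(\epsilon^2r_s^2)$.
\item A half-space does give a uniform fraction. If $\langle x_i-x_j,v\rangle\le 0$ for all $i\le k$, then $x_j+\tfrac{r_s}{2}v$ is at distance at least $\tfrac{\sqrt5}{2}r_s$ from every $x_i$. But this needs a direction $v$ in which a point of $x_{k+1:2k}$ is extreme for the \emph{whole} configuration $x_{1:2k}$.
\item If every extreme point of $CH(x_{1:2k})$ lies in $x_{1:k}$, then (generically) no such direction exists, and ``the direction pointing away from $x_{1:k}$'' is undefined. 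This case does occur. Take $\Gamma$ a long path, $x_{1:k}$ realised as an almost closed circular arc, and $x_{k+1:2k}$ a meandering path inside it.
\end{itemize}

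In that case your argument gives nothing. A one-sided bound there would need a different, global argument, and it is not clear that it holds uniformly over all feasible $\Gamma$.

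The missing idea is that you do not need the one-sided bound. Split $T_{s,k}$ according to whether $x_{k+1:2k}$ contains an extreme point of $CH(x_{1:2k})$.
\begin{itemize}
\item If it does, argue exactly as you did.
\item If it does not, take an extreme point in $x_{1:k}$. All of $x_{k+1:2k}$ then lies in the closed half-space behind its supporting hyperplane. The same half-space argument gives $\operatorname{Vol}(U(x_{1:k})\setminus U(x_{k+1:2k}))\ge\gamma\kappa_d r_s^d$. Bound the integrand by $\mathds{1}(x_{1:k}\cong\Gamma)\mathds{1}(x_{k+1:2k}\cong\Gamma)e^{-s\operatorname{Vol}(U(x_{k+1:2k}))}e^{-\gamma\kappa_d sr_s^d}$ and integrate out $x_{1:k}$ instead. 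What remains is again your reference quantity $R_s$, now written in the variables $x_{k+1:2k}$.
\end{itemize}

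This is the paper's decomposition $T_{s,k}=I_1+I_2$. With this case split added, your proof goes through.
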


\begin{proof}
    Let $CH(x_{1:2k})$ denote the the convex hull of $x_{1:2k}$ and $E(x_{1:2k})$ its extreme points. Thus, $E(x_{1:2k}) \subseteq x_{1:2k}$. We may split $T_{s,k}$ into two separate cases - the case where at least one point in $E(x_{1:2k})$ is in $x_{k+1:2k}$, and the other case, where all points in $E(x_{1:2k})$ are in  $x_{1:k}$.
    We thus have $T_{s,k} = I_1 + I_2$, where
    \begin{multline*}
        I_1= \int\limits_{\substack{\mathbb{T}^{k}}}\; \lambda_s^{k}(\dd x_{1:k}) \hspace{-1cm}\int\limits_{\substack{\mathbb{T}^{k} \\ d(x_{1:k},x_{k+1:2k})\in(r_s,2r_s], \\ x_{k+1:2k} \cap E(x_{1:2k}) \neq \emptyset}} \E [f_s(x_1,\dots,x_k,\eta_s^{x_{1:2k}})f_s(x_{k+1},\dots,x_{2k},\eta_s^{x_{1:2k}})]\, \lambda_s^{k}(\dd x_{k+1:2k}),
    \end{multline*}
    and
    \begin{multline*}
        I_2= \int\limits_{\substack{\mathbb{T}^{k}}}\; \lambda_s^{k}(\dd x_{k+1:2k}) \hspace{-1cm}\int\limits_{\substack{\mathbb{T}^{k} \\ d(x_{1:k},x_{k+1:2k})\in(r_s,2r_s], \\ E(x_{1:2k}) \subseteq x_{1:k}}} \E [f_s(x_1,\dots,x_k,\eta_s^{x_{1:2k}})f_s(x_{k+1},\dots,x_{2k},\eta_s^{x_{1:2k}})]\, \lambda_s^{k}(\dd x_{1:k}).
    \end{multline*}

    First, we consider $I_1$. In this case, there is at least one extreme point in $E(x_{1:2k})$ from $x_{k+1:2k}$. Since we additionally have $d(x_{1:k}, x_{k+1:2k}) > r_s$, there exists a constant $c \in (0,1/2)$ depending only on $d$ such that (see Figure \ref{fig:cover})
    \begin{equation}\label{eq:vollb}
        \operatorname{Vol}\left(\bigcup\limits_{i=1}^{2k}B(x_i,r_s)\setminus\bigcup\limits_{i=1}^{k}B(x_i,r_s)\right)\geq c \,\kappa_d sr_s^d.
    \end{equation}

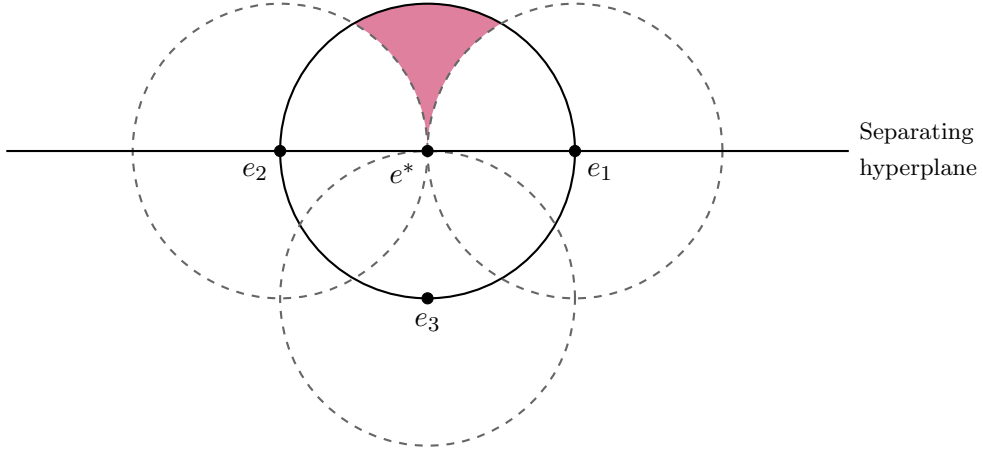
\begin{figure}
\begin{center}
    \begin{tikzpicture}[scale=1.4]
    \def\rs{1.4}

    \begin{scope}
        \clip (0,0) circle (\rs);
        \fill[purple!50] (-3,0) rectangle (3,3);
        
        \fill[white] (\rs,0) circle (\rs);   
        \fill[white] (-\rs,0) circle (\rs);  
        \fill[white] (0,-\rs) circle (\rs);  
    \end{scope}

    \draw[thick] (0,0) circle (\rs);
    \draw[thick, dashed, black!60] (\rs,0) circle (\rs);
    \draw[thick, dashed, black!60] (-\rs,0) circle (\rs);
    \draw[thick, dashed, black!60] (0,-\rs) circle (\rs);

    \draw[thick] (-4,0) -- (4,0) node[right, align=left] {\footnotesize Separating\\\footnotesize hyperplane};

    \filldraw (0,0) circle (1.5pt) node[below left=1pt] {$e^*$};
    \filldraw (\rs,0) circle (1.5pt) node[below right=1pt] {$e_1$};
    \filldraw (-\rs,0) circle (1.5pt) node[below left=1pt] {$e_2$};
    \filldraw (0,-\rs) circle (1.5pt) node[below=2pt] {$e_3$};
\end{tikzpicture}
\caption{Illustration of the lower bound in \eqref{eq:vollb} in 2D. The point $e^*$ is an extreme point in $x_{k+1:2k}$, while $e_1,e_2,e_3$ are points in $x_{1:k}$. Since $x_{1:k}$ lie below the separating hyperplane and $d(x_{1:k},e^*) \ge r_s$, even in the worst case $\cup_{x \in x_{1:k}} B(x,r_s)$ cannot cover the coloured region, which has volume $\Omega(r_s^d)$.} \label{fig:cover}
\end{center}
\end{figure}
Thus, we can upper bound
    \begin{align*} 
        I_1& =  \int\limits_{\substack{\mathbb{T}^{k}}}\; \lambda_s^{k}(\dd x_{1:k}) \hspace{-1cm}\int\limits_{\substack{\mathbb{T}^{k} \\ d(x_{1:k},x_{k+1:2k})\in(r_s,2r_s], \\ x_{k+1:2k} \cap E(x_{1:2k}) \neq \emptyset}} \mathds{1}(x_{1:k} \cong \Gamma, x_{k+1:2k} \cong \Gamma)\P(d(x_{1:2k}, \eta_s)> r_s)\,  \lambda_s^{k}(\dd x_{k+1:2k})\\
        &= \int\limits_{\substack{\mathbb{T}^{k}}} \mathds{1}(x_{1:k} \cong \Gamma) \P(d(x_{1:k}, \eta_s)> r_s) \, \lambda_s^{k}(\dd x_{1:k}) \\
        & \qquad \times \hspace{-1.2cm}\int\limits_{\substack{\mathbb{T}^{k} \\ d(x_{1:k},x_{k+1:2k})\in(r_s,2r_s], \\ x_{k+1:2k} \cap E(x_{1:2k}) \neq \emptyset}} \mathds{1}(x_{k+1:2k} \cong \Gamma)\P(d(x_{k+1:2k}, \eta_s)> r_s \,\big|\, d(x_{1:2k}, \eta_s)> r_s)\, \lambda_s^{k}(\dd x_{k+1:2k})\\
        &\leq\int\limits_{\substack{\mathbb{T}^{k}}}\E[f_s(x_{1:k},\eta_s^{x_{1:k}})^2] \, \lambda_s^k(\dd x_{1:k})\\
        & \qquad \times \int\limits_{\substack{\mathbb{T}^{k} \\d(x_{1:k},x_{k+1:2k})\in(r_s,2r_s]}} \mathds{1}(x_{k+1:2k} \cong \Gamma) e^{-s\operatorname{Vol}(\bigcup_{j=k+1}^{2k}B(x_j,r_s)\setminus \bigcup_{i=1}^{k}B(x_i,r_s))} \,\lambda_s^k(\dd x_{k+1:2k}) \\
        &\lesssim e^{-c \kappa_dsr_s^d} s^k r_s^{kd} \int_{\mathbb{T}^k}\E[f_s(x_{1:k},\eta_s^{x_{1:k}})^2]\,\lambda_s^k(\dd x_{1:k}) \lesssim \int_{\mathbb{T}^k}\E[f_s(x_{1:k},\eta_s^{X_{1:k}})^2]\,\lambda_s^k(\dd x_{1:k}).
    \end{align*}
   Here in the penultimate step, the factor $r_s^{kd}$ is, up to constants, an upper bound to the volume of the region for $x_{k+1:2k}$ such that $d(x_{1:k}, x_{k+1:2k}) \in (r_s,2r_s]$ with $x_{k+1:2k}$ isomorphic to the connected graph $\Gamma$,  while the last step is true since $e^{-c\kappa_dsr_s^d}(sr_s^d)^k$ approaches 0 as $s\to\infty$ in the dense regime.
    
\medskip

    Next we turn our attention to $I_2$. Since $\emptyset \neq E(x_{1:2k}) \subseteq x_{1:k}$, arguing as in \eqref{eq:vollb}, we have that
    $$
    \operatorname{Vol}(\bigcup\limits_{i=1}^{2k}B(x_i,r_s)\setminus\bigcup\limits_{i=k+1}^{2k}B(x_i,r_s))\geq c\kappa_d sr_s^d.
    $$ 
    Thus, arguing similarly as above, we obtain
        \begin{align*} 
        I_2& =  \int\limits_{\substack{\mathbb{T}^{k}}}\; \lambda_s^{k}(\dd x_{k+1:2k}) \hspace{-1cm}\int\limits_{\substack{\mathbb{T}^{k} \\ d(x_{1:k},x_{k+1:2k})\in(r_s,2r_s], \\ E(x_{1:2k}) \subseteq x_{1:k}}}\mathds{1}(x_{1:k} \cong \Gamma, x_{k+1:2k} \cong \Gamma)\P(d(x_{1:2k}, \eta_s)> r_s)\, \lambda_s^{k}(\dd x_{1:k}) \\
        &\leq\int\limits_{\substack{\mathbb{T}^{k}}}\E[f_s(x_{k+1:2k},\eta_s^{x_{k+1:2k}})^2] \, \lambda_s^k(\dd x_{k+1:2k})\\
        & \qquad \times \int\limits_{\substack{\mathbb{T}^{k} \\ d(x_{1:k},x_{k+1:2k})\in(r_s,2r_s]}} \mathds{1}(x_{1:k} \cong \Gamma) e^{-s\operatorname{Vol}(\bigcup_{i=1}^{k}B(x_i,r_s)\setminus \bigcup_{j=k+1}^{2k}B(x_j,r_s))}\, \lambda_s^k(\dd x_{1:k}) \\
        &\lesssim e^{-c\kappa_dsr_s^d}(sr_s^d)^k \int_{\mathbb{T}^k}\E[f_s(x_{k+1:2k},\eta_s^{x_{k+1:2k}})^2]\, \lambda_s^k(\dd x_{k+1:2k}) \\
        &\lesssim \int_{\mathbb{T}^k}\E[f_s(x_{k+1:2k},\eta_s^{x_{k+1:2k}})^2]\, \lambda_s^k(\dd x_{k+1:2k})
        =  \int_{\mathbb{T}^k}\E[f_s(x_{1:k},\eta_s^{x_{1:k}})^2]\, \lambda_s^k(\dd x_{1:k}).
    \end{align*}
    Since $T_{s,k} = I_1+I_2$, combining the bounds above yields the result.
\end{proof}
Now we are ready to prove \autoref{Thm:GammaComps}. Here we also require a multivariate version of the Mecke formula \eqref{eq:Mecke}. For every $k\in\N$ and any measurable function $h:\mathbb{T}^{k} \times\mathbf{N}\to [0,\infty]$, it holds that (see \cite[Theorem 4.4]{LastPenrose})
\begin{equation}\label{eq:Mecke_multivar}
	\E \sum_{(x_1, \hdots, x_k) \in \eta_s^{\neq,k}} h(x_{1:k},\eta) 
	= \E \int_{\mathbb{T}^k}  h(x_{1:k},\eta_s^{x_{1:k}}) \, \lambda_s^k(\dd x_{1:k}).
\end{equation}
\begin{proof}[Proof of \autoref{Thm:GammaComps}]
    Recall that $G_s(\Gamma)=k!J_s(\Gamma)$. Observe that
    \begin{align}\label{eq:g}
    D_yG_s(\Gamma)&=\sum_{(x_1,\dots,x_k)\in (\eta_s^y)^{\neq,k}} f_s(x_{1:k},\eta_s^y)-\sum_{(x_1,\dots,x_k)\in\eta_s^{\neq,k}} f_s(x_{1:k},\eta_s) \nonumber\\
    &= \sum\limits_{\substack{(x_1,\dots,x_k)\in (\eta_s^y)^{\neq,k} \\ y\in x_{1:k}}}f_s(x_{1:k},\eta_s^y) +\sum_{(x_1,\dots,x_k)\in\eta_s^{\neq,k}} D_yf_s(x_{1:k},\eta_s) \nonumber\\
    &=: g_1(y,\eta_s) - g_2(y,\eta_s).
\end{align}
The Poincar\'e inequality (\ref{thm:poincare}) thus yields
\begin{equation} \label{eq:var_Gs}
	\Var(G_s(\Gamma))\leq\E \int_\X (D_y G_s(\Gamma))^2 \, \lambda_s(\dd y)
	= \int _\X\E [g_1(y,\eta_s)^2+g_2(y,\eta_s)^2-2g_1(y,\eta_s)g_2(y,\eta_s)] \,\lambda_s(\dd y).
\end{equation}

Now, we will show $\E g_1(y,\eta_s)^2  \le \eps_s \E g_2(y,\eta_s)^2$ for $\lambda_s$-a.e.\ $y \in \X$ for some $\eps_s \to 0$ as $s\to \infty$ (this is equivalent to showing \ref{A2} in the single variable case in \autoref{thm:supcon_sos}).

We begin by calculating $\E g_2(y,\eta_s)^2$. Observe that $D_y f_s(x_{1:k},\eta_s) \in \{-1,0\}$, since the additional point $y$ may cause a $\Gamma$-component at $x_{1:k}$ to no longer be isolated, but cannot cause $x_{1:k}$ to form a $\Gamma$-component when it did not before. Thus for $g_2$ as in \eqref{eq:g}, we trivially lower bound
\begin{equation}\label{eq:g2bd1}
\E g_2(y,\eta_s)^2 \ge \E\left[-\sum_{(x_1,\dots,x_k)\in\eta_s^{\neq,k}}D_yf_s(x_{1:k},\eta_s)\right].
\end{equation}
Note that for distinct $x_1, \hdots, x_k \in \mathbb{T}$,
\begin{align*}
    \left\{D_yf_s(x_{1:k},\eta_s^{x_{1:k}}) = -1 \right\} \iff & \left\{x_{1:k} \cong \Gamma\right\} \cap \left\{d(x_{1:k}, \eta_s)>r_s\right\} \cap \left\{y \in \cup_{i=1}^k B(x_i,r_s)\right\}
\end{align*} 
so that
\begin{equation}\label{eq:ExpD-1}
    -\E \left[D_yf_s(x_{1:k},\eta_s^{x_{1:k}})\right] \ge \mathds{1}(x_{1:k} \cong \Gamma) \mathds{1}\left(x_1 \in B(y,r_s)\right) \P \left(d(x_{1:k}, \eta_s)>r_s\right).
\end{equation}
Plugging in \eqref{eq:ExpD-1}, an application of \eqref{eq:Mecke_multivar} and \eqref{eq:g2bd1} yields
\begin{align}\label{eq:g2bd}
    \E g_2(y,\eta_s)^2 &\ge 
    -\int_{\mathbb{T}^k} \E[D_yf_s(x_{1:k},\eta_s^{x_{1:k}})]\, \lambda_s^k(\dd x_{1:k})\nonumber\\
    &\ge s^k\int_{\mathbb{T}^{k-1}}\int_{B(y,r_s)} \mathds{1}(x_{1:k} \cong \Gamma) \P \left(d(x_{1:k}, \eta_s)>r_s\right) \,\dd x_1 \dd x_{2:k} \nonumber\\
    &= \kappa_dr_s^d s^k \int_{\mathbb{T}^{k-1}} \mathds{1}(\{y,x_{2:k}\} \cong \Gamma ) \P \left(d(\{y,x_{2:k}\}, \eta_s)>r_s\right)\,\dd x_{2:k},
\end{align}
where the last step holds due to the stationarity of $\eta_s$.

Next, let us estimate $\E g_1(y,\eta_s)^2$ with $g_1$ as in \eqref{eq:g}. Since $y$ is a common point in all the $k$-tuples in the sum $g_1$ defined in \eqref{eq:g}, we can have $f_s=1$ for at most one such set of $k$ points. Counting its $k!$ permutations (when there is such a set), we thus have that $g_1(y,\eta_s) \in\{0,k!\}$. Therefore 
\begin{equation}\label{eq:g1sqbd}
    \E g_1(y,\eta_s)^2=k!\, \E g_1(y,\eta_s).
\end{equation}
Now, with the use of \eqref{eq:Mecke_multivar} and noting that $f_s$ is symmetric in $x_{1:k}$, for $s$ large enough we can evaluate 
\begin{align}\label{eq:g1bd}
    \E g_1(y,\eta_s)
    &=k \int_{\mathbb{T}^{k-1}}\E[f_s(\{y ,x_{2:k}\},\eta_s^{\{y,x_{2:k}\}})] \, \lambda_s^{k-1}(\dd x_{2:k}) \nonumber\\
    &=k s^{k-1}\int_{\mathbb{T}^{k-1}} \mathds{1}(\{y,x_{2:k}\} \cong \Gamma ) \P \left(d(\{y,x_{2:k}\}, \eta_s)>r_s\right)\,\dd x_{2:k}.
\end{align}
With both the expectations estimated, comparing \eqref{eq:g2bd} and \eqref{eq:g1bd}, along with the relation $\E g_1(y,\eta_s)^2=k! \E g_1(y,\eta_s)$ yields
\begin{equation}\label{eq:A2mult}
    \E g_1(y,\eta_s)^2  \le \frac{k k!}{\kappa_d sr_s^d} \E g_2(y,\eta_s)^2 = :\eps_s \E g_2(y,\eta_s)^2,
\end{equation}
with $\eps_s = k k!(\kappa_d sr_s^d )^{-1} \asymp (sr_s^d)^{-1} \to 0$ as $s \to \infty$.

We can now combine \eqref{eq:A2mult} and \autoref{lem:GammaComps} to obtain the result. Note by the Cauchy-Schwarz inequality and \eqref{eq:A2mult} that for $\lambda_s$-a.e.\ $y \in \X$,
$$
\E[g_1(y,\eta_s)g_2(y,\eta_s)]\leq\sqrt{{\E[g_1(y,\eta_s)^2]\E[g_2(y,\eta_s)^2]}}\le \sqrt{\eps_s} \E[g_2(y,\eta_s)^2].
$$
Combining this and \eqref{eq:A2mult} with the variance bound given in \eqref{eq:var_Gs}, we see
\begin{equation}\label{eq:PoincareRHS}
\E \int_\X (D_yJ_s(\Gamma))^2 \, \lambda_s(\dd x)\ge (1-\sqrt{\eps_s})^2 \int_\X \E [g_2(y,\eta_s)^2] \,\lambda_s(\dd y).
\end{equation}
Thus, the Poincar\'e inequality upper bound is of the order $\E [g_2(y,\eta_s)^2] \,\lambda_s(\dd y)$.

\medskip

To show superconcentration, we next compute a tight bound for the variance without using the Poincar\'{e} inequality. By \eqref{eq:Mecke_multivar}, we can expand
\begin{align}\label{eq:varest_comgrph}
   \Var(G_s)&=\E \Big(\sum_{(x_1,\dots,x_k)\in\eta_s^{\neq,k}}f_s(x_{1:k},\eta_s)\Big)^2-(\E G_s)^2 \nonumber\\
	&=\E \Big[k! \sum\limits_{(x_1,\dots,x_k)\in \eta_s^{\neq,k}}f_s^2(x_{1:k},\eta_s^{x_{1:k}}) 
   +\sum\limits_{(x_1,\dots,x_{2k})\in\eta_s^{\neq,2k}}[f_s(x_{1:k},\eta_s^{x_{1:2k}})f_s(x_{k+1:2k},\eta_s^{x_{1:2k}})] \Big]\nonumber\\
   &\qquad \qquad \qquad -\left[\int_{\mathbb{T}^k} \E[f_s(x_{1:k},\eta_s^{x_{1:k}})]\, \lambda_s^k(\dd x_{1:k})\right]^2 \nonumber\\
   &=k! \int_{\mathbb{T}} \E[f_s(x_{1:k},\eta_s^{x_{1:k}})^2] \,\lambda_s^{k}(\dd x_{1:k})\nonumber\\
   &\qquad+\int_{\mathbb{T}^k}\int_{\mathbb{T}^k} \Big(\E [f_s(x_{1:k},\eta_s^{x_{1:2k}})f_s(x_{k+1:2k},\eta_s^{x_{1:2k}})] \nonumber\\
        & \qquad \qquad\qquad- \E [f_s(x_{1:k},\eta_s^{x_{1:k}})]\E[f_s(x_{k+1:2k},\eta_s^{x_{k+1:2k}})]\Big) \, \lambda_s^{k}(\dd x_{1:k}) \lambda_s^{k}(\dd x_{k+1:2k})\nonumber\\
    &=:k! \int_{\mathbb{T}} \E[f_s(x_{1:k},\eta_s^{x_{1:k}})^2] \,\lambda_s^{k}(\dd x_{1:k})+ I.
\end{align}
In the second equality above, many terms in the expansion of the first square vanish, since they are products of the score function $f_s$ at two $k$-tuples that share at least one point, and therefore cannot both form an isolated $\Gamma$-component in $\eta_s$.

\noindent To compute the integral $I$ in the above variance expansion \eqref{eq:varest_comgrph}, as in Section \ref{sec:isovert}, we consider the following three cases: (a) $ d(x_{1:k},x_{k+1:2k}) > 2r_s$, (b) $d(x_{1:k},x_{k+1:2k}) \in (r_s, 2r_s]$ and (c) $d(x_{1:k},x_{k+1:2k}) \le r_s$. In case (a), due to spatial independence, we have 
\begin{align*}
    \E [f_s(x_{1:k},\eta_s^{x_{1:2k}})f_s(x_{k+1:2k},\eta_s^{x_{1:2k}})]=\E [f_s(x_{1:k},\eta_s^{x_{1:k}})]\E[f_s(x_{k+1:2k},\eta_s^{x_{k+1:2k}})],
\end{align*}
and thus case (a) does not contribute to the integral $I$.

\noindent Also in case (c), 
$$
\E [f_s(x_{1:k},\eta_s^{x_{1:2k}})f_s(x_{k+1:2k},\eta_s^{x_{1:2k}})]=0,
$$
as in this case, the two components necessarily have connections between themselves and thus they cannot be isolated. Thus, the contribution from this case to the integral $I$ is non-positive.

\noindent Finally, note that the integral $I$ in case (b), upon dropping the non-positive summand, is bounded above by $T_{s,k}$ in \autoref{lem:GammaComps}. Combining these bounds for cases (a) -- (c) we obtain
$$
I \le T_{s,k} \leq C \int_{\mathbb{T}^k}\E[f_s(x_{1:k},\eta_s^{x_{1:k}})^2]\lambda_s^k(\dd x_{1:k})
$$
with $C \in (0,\infty)$ as in \autoref{lem:GammaComps}.
Inserting this into our overall variance expansion \eqref{eq:varest_comgrph} yields

\begin{align*}
    \Var(G_s)&\le (k!+C) \int_{\mathbb{T}^k}\E[f_s(x_{1:k},\eta_s^{x_{1:k}})^2]\, \lambda_s^k(\dd x_{1:k}) \\
    & = \frac{(k!+C)}{k!k} \int_{\mathbb{T}} k! \left(k \int_{\mathbb{T}^{k-1}}\E[f_s(\{y ,x_{2:k}\},\eta_s^{\{y,x_{2:k}\}})] \, \lambda_s^{k-1}(\dd x_{2:k})\right) \lambda_s(\dd y)\\
    &= \frac{(k!+C)}{k!k} \int_{\mathbb{T}} \E[g_1(y,\eta_s)^2]\, \lambda_s (\dd y) \le \frac{(k!+C)}{k!k} \eps_s \int_{\mathbb{T}} \E[g_2(y,\eta_s)^2]\, \lambda_s (\dd y),
\end{align*}
where the penultimate step is due to the first equality in \eqref{eq:g1bd} and \eqref{eq:g1sqbd}, and the final step uses \eqref{eq:A2mult}. 
Finally, applying the inequality \eqref{eq:PoincareRHS}, we obtain the bound
$$
\Var(G_s) \le \frac{(k!+C)\eps_s}{k}  \int_{\mathbb{T}} \E[g_2(y,\eta_s)^2]\, \lambda_s (\dd y) \le \frac{(k!+C)}{k! k} \frac{\eps_s}{(1-\sqrt{\eps_s})^2 } \E \int_\X (D_yJ_s(\Gamma))^2 \, \lambda_s(\dd x).
$$
Noting that $\Var(J_s) = (k!)^{-2} \Var(G_s)$ and $\eps_s = k k!(\kappa_d sr_s^d )^{-1} \asymp (sr_s^d)^{-1} \to 0$ as $s \to \infty$ yields the conclusion.
\end{proof}

\section{Proofs of main results}\label{sec:proofs} 
In this section, we prove the results in Section \ref{sec:mainresults}. We first collect in Section \ref{sec:PoiBG} some basic properties of the Poisson space which will be essential for our analysis. Sections \ref{sec:chaosproof}, \ref{sec:varproof} and  \ref{sec:supproof} then contain the proofs of the results from Sections \ref{sec:chaosresults}, \ref{Sec:VarEst} and \ref{sec:SCres}, respectively. Throughout this section, for a $\sigma$-finite measure $\lambda$ on a measurable space $(\mathbb{X}, \mathcal{X})$, we denote the scalar product in $L^2(\lambda^m)$, $m \in \N$ by $\langle \cdot,\cdot\rangle_m$ and the associated norm by $\|\cdot\|_m$, while recall that $\|\cdot\|_{L_\eta^p}$, $p >0$ denotes the $L^p$-norm for  functionals of a Poisson process $\eta$.

\subsection{Poisson space preliminaries}\label{sec:PoiBG}

Let $\eta$ be a Poisson process on a measurable space $(\mathbb{X}, \mathcal{X})$ with a $\sigma$-finite intensity measure $\lambda$. Recall the add-one cost operator $D_x(\eta)$, $x \in \X$ defined in \eqref{eq:addone}, and inductively define the iterated add-one cost operator as
$$
D_{x_1, \hdots,x_n}^n F(\eta) = D_{x_1} (D^{n-1}_{x_2, \hdots, x_n} F(\eta)), \quad n \ge 2, x_1, \hdots, x_n \in \X.
$$

Every square-integrable Poisson functional $F \in L^2_\eta$ admits the so-called Fock space representation (see \cite[Equation (18.20)]{LastPenrose}) as an orthogonal expansion: 
\begin{equation}\label{eq:FS}
	F (\eta) = \E F + \sum_{n=1}^\infty I_n(f_n) = \sum_{n=0}^\infty I_n(f_n),
\end{equation}
where as a convention, we write $f_0 = \mathbb{E}[F(\eta)]$ and for $n \ge 1$, we denote $f_n(x_1, \hdots, x_n) = \frac{1}{n!} \E[D_{x_1,\hdots,x_n}^n F(\eta)]$ which is symmetric. The functions $I_n$ are the so-called Wiener-Ito integrals (see \cite[Chapter 12]{LastPenrose} for a definition) satisfying for all symmetric functions $f \in L^2(\lambda^m), g \in L^2(\lambda^n)$,
\begin{equation}\label{eq:Iorth}
	\E [I_m(f) I_n(g)] = \mathds{1}\{m=n\} m! \int fg \, \dd\lambda^m =  \mathds{1}\{m=n\} m! \langle f,g \rangle_m.
\end{equation}

We write $F \in \operatorname{dom} D$ if $F \in L^2_\eta$ and $\int_\X \E (D_x F)^2 \lambda(\dd x)<\infty$, that is, $DF \in L^2 (\lambda \otimes \P_\eta)$. For such an $F$ with the expansion \eqref{eq:FS}, one can also expand $D_x F$ to obtain
\begin{equation}\label{eq:Dexp}
	D_x F = \sum_{n=1}^\infty n I_{n-1}(f_n(x,\cdot)).
\end{equation}

Recall the semigroup of operators $(P_t)_{t \ge 0}$ defined in \eqref{eq:Meh}. It satisfies a commutation relation with the operator $D$ given by
\begin{equation}\label{eq:DPcomm}
	D_x(P_t F) = e^{-t} P_t (D_xF), \qquad \text{$\lambda$-a.e.\ $x \in \X, \P_\eta$-a.s., $t \ge 0$}.
\end{equation}
The following contractive property holds (see \cite[Chapter 1, equation (74)]{PeccatiReitzner}) for all $F \in L^2_\eta$ and $p \ge 1$:
\begin{equation}\label{eq:contractive}
	\E |P_t F|^p \le \E |F|^p, \qquad t \ge 0.
\end{equation}
While the semigroup is not generally hypercontractive, one has \textit{restricted hypercontractivity} (see \cite[Theorem 1.4]{NourdinPeccatiYang}): if $F \ge 0$ and $DF \le 0$, then for all $t \ge 0$ and $p \ge 1$,
\begin{equation}\label{eq:resthyp}
	\left\|P_t F\right\|_{L_\eta^{1+(p-1) e^t}} \leq \|F\|_{L_\eta^p}.
\end{equation}
Moreover, for $F \in L^2_\eta$ having the expansion given in \eqref{eq:FS}, one also has the orthogonal decomposition
\begin{equation}\label{eq:Ptexp}
	P_t F = \sum_{n=0}^\infty e^{-nt} I_n(f_n).
\end{equation}

Finally, the following covariance representation (see \cite[Theorem 20.2]{LastPenrose}) will also be useful for us: for $F, G \in \operatorname{dom} D$, 
\begin{align}\label{eq:Mehler}
	\operatorname{Cov}(F(\eta), G(\eta)) &= \int^\infty_0 e^{-t} \int_\X \mathbb{E}[D_xF(\eta) P_t D_x G(\eta)]\,\lambda(\dd x)\dd t \nonumber\\
	&= \int^\infty_0 e^{-t} \int_\X \mathbb{E}[D_xF(\eta) D_x G(\eta^t)]\,\lambda(\dd x)\dd t.
\end{align}

\subsection{Proof of results in Section \ref{sec:chaosresults}}\label{sec:chaosproof}
\begin{proof}[Proof of \autoref{thm:eqv}]
	For the first implication, let $F_s$ be $\eps_s$-superconcentrated. If $F \notin \operatorname{dom} D$, then one trivially has the chaos. Assume now $F \in  \operatorname{dom} D$. Combined with \eqref{eq:Mehler}, we have
	\begin{align*}
		\eps_s  \;\E \int_\X (D_x F_s(\eta_s))^2 \lambda_s(\dd x) \ge \operatorname{Var}(F_s(\eta_s))  = \int^\infty_0 e^{-t} \int_\X \mathbb{E}[D_xF(\eta_s) D_x F(\eta_s^t)]\lambda_s(\dd x)\dd t.	
	\end{align*}
	Now, let $F_s$ have the Fock space representation $F_s =\sum_{n=0}^\infty I_n(f_n)$. Then combining \eqref{eq:DPcomm}, \eqref{eq:Dexp}, \eqref{eq:Ptexp} and \eqref{eq:Iorth}, one has
	\begin{align}\label{eq:Et}
		E_t:&=e^{-t}\int_\X \mathbb{E}[D_xF(\eta_s) D_x F(\eta_s^t)]\lambda_s(\dd x) = \int_\X \mathbb{E}[D_xF(\eta_s) D_x P_t F(\eta_s)]\lambda_s(\dd x) \nonumber\\
		&\qquad =\int_\X \mathbb{E}\Big[\sum_{n=1}^\infty n I_{n-1}(f_n(x,\cdot)) \sum_{n=1}^\infty n e^{-nt}  I_{n-1}(f_n(x,\cdot))\Big]\lambda_s(\dd x)= \sum_{n=1}^\infty n n! e^{-nt} \|f_{n}\|_n^2
	\end{align}
	which is non-negative and non-increasing in $t$. Thus, for any $\delta_s>0$, 
	$$
	\eps_s  \;\E \int_\X (D_x F_s(\eta_s))^2 \lambda_s(\dd x) \ge  \int^{\delta_s}_0E_t \dd t \ge \delta_s E_{\delta_s}.
	$$
	Hence,
	$$
	e^{\delta_s} E_{\delta_s} \le \eps_s \frac{e^{\delta_s}}{\delta_s} \;\E \int_\X (D_x F_s(\eta_s))^2 \lambda_s(\dd x) .
	$$
	Noting from \eqref{eq:Et} that $e^t E_t$ is non-increasing in $t$, we obtain for all $t \ge \delta_s$ that
	$$
	\int_\X \mathbb{E}[D_xF_s(\eta_s) D_x F_s(\eta_s^t)]\lambda_s(\dd x) = e^t E_t \le e^{\delta_s} E_{\delta_s} \le \eps_s \frac{e^{\delta_s}}{\delta_s} \;\E \int (D_x F_s(\eta_s))^2 \lambda_s(\dd x),
	$$
	which by \autoref{definition:chaos} proves that $F_s$ is $(\eps_s e^{\delta_s}/\delta_s, \delta_s)$-chaotic, yielding the first assertion.
	\medskip
	
	Conversely, if $F_s$ is $(\eps_s, \delta_s)$-chaotic, again using that $E_t$ is non-increasing, we have
	\begin{align*}
		\operatorname{Var}(F_s(\eta_s))  &=  \int^{\delta_s}_0 e^{-t} \int_\X\mathbb{E}[D_xF_s(\eta_s) D_x F_s(\eta_s^t)]\lambda_s(\dd x)\dd t + \int_{\delta_s}^\infty e^{-t} \int_\X\mathbb{E}[D_xF_s(\eta_s) D_x F_s(\eta_s^t)]\lambda_s(\dd x)\dd t\\
		& \le \delta_s E_0 + \eps_s \E \int_\X (D_x F_s(\eta_s))^2 \lambda_s(\dd x)\\
		& = (\delta_s + \eps_s) \E \int_\X (D_x F_s(\eta_s))^2 \lambda_s(\dd x),
	\end{align*}
	implying $F_s$ is $(\eps_s+\delta_s)$-superconcentrated, concluding the proof.
\end{proof}

Before proving \autoref{thm:var}, we first prove the following identity.
\begin{lemma}\label{lem:intermed}
	For $\eta$ a Poisson point process on a measurable space $(\mathbb{X},\mathcal{X})$ with $\sigma$-finite intensity measure $\lambda$, and $F\in L^2_\eta$ with $DF \in L^2 (\lambda \otimes \P_\eta)$,
	 \begin{equation}
		e^{-t}  \int_\X \E  \left[D_x F(\eta) D_x F(\eta^t)\right] \lambda(\dd x) = \E \Big[\sum_{x \in \eta \cap \eta^t} D_x^- F(\eta) D_x^- F(\eta^t)\Big].
	\end{equation}
\end{lemma}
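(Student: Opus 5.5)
The plan is to realise the pair $(\eta,\eta^t)$ through an explicit joint construction that puts the common points in one place, and then to apply the Mecke formula to them. By \eqref{eq:Meh}, write $\eta^t=\eta_{e^{-t}}+\eta'_{1-e^{-t}}$. Equivalently, take three independent Poisson processes: $\xi$ with intensity $e^{-t}\lambda$ (the retained points), $\zeta$ with intensity $(1-e^{-t})\lambda$ (the deleted points), and $\zeta'$ with intensity $(1-e^{-t})\lambda$ (the newborn points). Then
\[
\eta=\xi+\zeta,\qquad \eta^t=\xi+\zeta',
\]
which follows from the thinning theorem: an independent $e^{-t}$-thinning of a Poisson process and its complement are independent Poisson processes. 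Since $\zeta$ and $\zeta'$ are almost surely disjoint from each other and from $\xi$ (the intensity is diffuse or, more generally, we work with multiplicities), the common points are exactly those of $\xi$. Thus $\sum_{x\in\eta\cap\eta^t}$ is $\sum_{x\in\xi}$, with the natural multiplicity convention if $\lambda$ has atoms.

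Next I rewrite the right-hand side as a function of $(\xi,\zeta,\zeta')$:
\[
\E\Big[\sum_{x\in\xi} h(x,\xi)\Big],\qquad h(x,\xi):=D_x^-F(\xi+\zeta)\,D_x^-F(\xi+\zeta'),
\]
where $\zeta,\zeta'$ are treated as independent parameters. I then apply the Mecke formula \eqref{eq:Mecke} to $\xi$, whose intensity is $e^{-t}\lambda$, conditionally on $(\zeta,\zeta')$. Replacing $\xi$ by $\xi+\delta_x$ turns $D_x^-F(\xi+\delta_x+\zeta)$ into $F(\xi+\zeta+\delta_x)-F(\xi+\zeta)=D_xF(\eta)$, and similarly the second factor becomes $D_xF(\eta^t)$. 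This gives
\[
e^{-t}\int_\X \E\big[D_xF(\eta)\,D_xF(\eta^t)\big]\,\lambda(\dd x),
\]
which is exactly the left-hand side.

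The main technical point is integrability. The Mecke formula is stated for nonnegative $h$, so I would apply it separately to the positive and negative parts of $h$. Both resulting integrals are finite: by Cauchy--Schwarz,
\[
\E\,|D_xF(\eta)D_xF(\eta^t)|\le \tfrac12\,\E(D_xF(\eta))^2+\tfrac12\,\E(D_xF(\eta^t))^2=\E(D_xF(\eta))^2,
\]
using that $\eta^t\overset{d}{=}\eta$, and the right-hand side is integrable against $\lambda$ because $DF\in L^2(\lambda\otimes\P_\eta)$. Subtracting the two identities is therefore legitimate, and this justifies the claimed identity.

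The remaining care is measurability and the identification of $\eta\cap\eta^t$ with $\xi$ when $\lambda$ has atoms. There one interprets $\eta\cap\eta^t$ as the retained points counted with multiplicity, which is the interpretation under which the statement is meant.
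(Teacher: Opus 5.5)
Your proof is correct and is essentially the paper's argument: both apply the Mecke formula to the common points and convert the remove-one costs into add-one costs at $\eta$ and $\eta^t$. The difference is bookkeeping. You apply Mecke to the retained process $\xi$ of intensity $e^{-t}\lambda$, so the factor $e^{-t}$ comes from the intensity. The paper instead applies Mecke to $\eta$ and obtains $e^{-t}$ as the retention probability of the added point, via conditional independence given $\eta$. Your coupling makes that conditional-independence step automatic. Your explicit integrability check via positive and negative parts, and your remark on interpreting $\eta\cap\eta^t$ with multiplicity when $\lambda$ has atoms, settle two points the paper passes over silently.
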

\begin{proof}
	      Note that
	\begin{align*}
		\E \Big[\sum_{x \in \eta \cap \eta^t} D_x^- F(\eta) D_x^- F(\eta^t)\Big]
		&= \E \Big[\sum_{x \in \eta} \mathds{1}(x \in \eta^t) D_x^- F(\eta) D_x^- F(\eta^t) \Big]\\
		&=\E \Big[\sum_{x \in \eta} D_x^- F(\eta) \E \left(\mathds{1}(x \in \eta^t) D_x^- F(\eta^t) \big| \eta\right)\Big].
	\end{align*}
	Thus using the Mecke formula \eqref{eq:Mecke}, we obtain
	\begin{align*}
		&\E \Big[\sum_{x \in \eta \cap \eta^t} D_x^- F(\eta) D_x^- F(\eta^t)\Big] \\
		&= \int_\X \E  \left[D_x F(\eta) \E \Big(\mathds{1}(x \in (\eta+\delta_x)^t) [F((\eta+\delta_x)^t) - F((\eta+\delta_x)^t-\delta_x)]  \big| \eta\Big) \right] \lambda(\dd x)\\
		&=\int_\X \E  \left[D_x F(\eta) \E \Big(\mathds{1}(x \in (\eta+\delta_x)^t) [F(\eta^t+\delta_x) - F(\eta^t)]  \big| \eta\Big) \right] \lambda(\dd x).
	\end{align*}
	By the definition of the process $(\eta+\delta_x)^t$, we have $\P(x\in(\eta+\delta_x)^t | \eta)=e^{-t}$. Thus, by the conditional independence of $\mathds{1}(x \in (\eta+\delta_x)^t)$ and $F(\eta^t+\delta_x) - F(\eta^t)$, given $\eta$, we have
	\begin{align*}
		\E \bigg[\sum_{x \in \eta \cap \eta^t} D_x^- F(\eta) D_x^- F(\eta^t)\bigg]
		&=\int_\X e^{-t} \E  \left[D_x F(\eta) \E \Big(F(\eta^t+\delta_x) - F(\eta^t)  \big| \eta\Big) \right] \lambda(\dd x)\\
		&=e^{-t} \int_\X  \E  \left[D_x F(\eta) D_x F(\eta^t)\right] \lambda(\dd x). \qedhere
	\end{align*}
\end{proof}
\begin{proof}[Proof of \autoref{thm:var}]
        The result follows immediately from \autoref{lem:intermed} and \eqref{eq:Mehler}.
    \end{proof}

\begin{proof}[Proof of \autoref{cor:var}]
	Let $D_xF_s(\mu) D_xF_s(\mu')\in \{0,1\}$ for $x \in \X$ and $\mu, \mu' \in \mathbf{N}$, or equivalently $D_x^-F_s(\mu) D_x^-F_s(\mu')\in \{0,1\}$ for $x \in \mu \cap \mu'$ and $\mu, \mu' \in \mathbf{N}$. Thus, from the definition of the random sets $A_s^0$ and $A_s^t$, we obtain
	\begin{align*}
		\int^\infty_0\mathbb{E}\bigg[\sum_{x\in\eta_s\cap\eta_s^t}D_x^-F_s(\eta_s)D_x^-F_s(\eta_s^t)\bigg]\dd t &= \int^\infty_0\mathbb{E}\bigg[\sum_{x\in\eta_s\cap\eta_s^t} \mathds{1}\{D_x^-F_s(\eta_s) \neq 0, D_x^-F_s(\eta_s^t) \neq 0\}\bigg]\dd t\\
		&=\int^\infty_0\mathbb{E}|A_s^0\cap A_s^t|\dd t,
	\end{align*}
	so that the first assertion follows by \autoref{thm:var}.
	\medskip
	
	Now let $F_s$ be $(\eps_s,\delta_s)$-chaotic, that is for all $t \ge \delta_s$,
	\begin{equation}\label{eq:setchaos}
		\int_\X \E \left[D_x F_s(\eta_s) D_x F_s(\eta_s^t)\right] \lambda_s(\dd x)
		\le \eps_s \; \E \int_\X (D_x F_s(\eta_s))^2  \lambda_s(\dd x).
	\end{equation}
	Now by \autoref{lem:intermed}, we have for $t \ge \delta_s$,
	$$
	\int_\X \E \left[D_x F_s(\eta_s) D_x F_s(\eta_s^t)\right] \lambda_s(\dd x) = e^t \mathbb{E}\bigg[\sum_{x\in\eta\cap\eta^t}D_x^-F_s(\eta)D_x^-F_s(\eta^t)\bigg] = e^t \E|A_s^0 \cap A_s^t|,
	$$
	while the Mecke formula \eqref{eq:Mecke} and noting from our assumption that $(DF)^2 = \mathds{1}\{DF \neq 0\} \in \{0,1\}$ yields 
	$$
	\E \int_\X (D_x F_s(\eta_s))^2  \lambda_s(\dd x) = \E\bigg[\sum_{x \in \eta_s} \mathds{1}\{D_x^- F_s(\eta_s) \neq 0\}\bigg] = \E|A_s^0| \equiv \E|A_s|.
	$$
	Thus, from \eqref{eq:setchaos} we obtain that for $t \ge \delta_s$,
	$$
	e^t \E|A_s^0 \cap A_s^t| \le  \eps_s \E|A_s|,
	$$
	showing that $A_s$ is $(\eps_s,\delta_s)$-chaotic. Following the argument in reverse order yields the converse.
\end{proof}

\begin{proof}[Proof of \autoref{cor:oneway}]
	Let $D_x F_s(\mu) D_xF_s(\mu') \in \{0\} \cup[c,\infty)$. Arguing similarly as in the proof of  \autoref{cor:var}, we obtain
	\begin{align*}
		\int^\infty_0\mathbb{E}\bigg[\sum_{x\in\eta_s\cap\eta_s^t}D_x^-F_s(\eta_s)D_x^-F_s(\eta_s^t)\bigg]\dd t &\ge c^2 \int^\infty_0\mathbb{E}\bigg[\sum_{x\in\eta_s\cap\eta_s^t} \mathds{1}\{D_x^-F_s(\eta_s) \neq 0, D_x^-F_s(\eta_s^t) \neq 0\}\bigg]\dd t\\
		&=c^2\int^\infty_0\mathbb{E}|A_s^0\cap A_s^t|\dd t
	\end{align*}
	yielding the first claim via \eqref{eq:Mehler}.
	\medskip
	
	Now let $F_s$ be $(\eps_s,\delta_s)$-chaotic. Again,  arguing  as in the proof of \autoref{cor:var}, we obtain
	$$
	\int_\X \E \left[D_x F_s(\eta_s) D_x F_s(\eta_s^t)\right] \lambda_s(\dd x) \ge c^2 e^t \E|A_s^0 \cap A_s^t|,
	$$
	yielding further that for $t \ge \delta_s$,
	$$
	e^t \E|A_s^0 \cap A_s^t| \le  c^{-2}\eps_s \E|A_s|.
	$$
	This shows that $A_s$ is $(c^{-2}\eps_s,\delta_s)$-chaotic.
\end{proof}
    
\subsection{Proofs of Results in Section \ref{Sec:VarEst}}\label{sec:varproof}
\begin{proof}[Proof of \autoref{thm:L1-L2}]
    Using the definition of $P_t$ in \eqref{eq:Meh} and the Cauchy-Schwarz inequality along with \eqref{eq:contractive} with $p=2$, we have
	\begin{align*}
		&\int^\infty_0 e^{-t}\int_\X \mathbb{E}[g(x,\eta) g(x,\eta^t)]\,\lambda(\dd x)\dd t\\
		&=\int^\infty_0 e^{-t}\int_\X \mathbb{E}[g(x,\eta) P_t g(x,\eta)]\,\lambda(\dd x)\dd t\\
		&\le \int^\infty_0 e^{-t}\int_\X \|g(x,\eta)\|_{L_\eta^2} \|P_t g(x,\eta)\|_{L_\eta^2} \,\lambda(\dd x)\dd t\\
		& \le  \int^\infty_0 e^{-t}\int_\X \|g(x,\eta)\|_{L_\eta^2} \|g(x,\eta)\|_{L_\eta^2} \,\lambda(\dd x)\dd t
	\end{align*}
	yielding the first assertion. 
	\medskip
	
	For the second assertion, we argue as in the proof of \cite[Theorem 1.6]{NourdinPeccatiYang}. Instead of Cauchy-Schwarz inequality, we now use H\"{o}lder's inequality to obtain
	\begin{equation*}
		\int^\infty_0 e^{-t}\int_\X \mathbb{E}[g(x,\eta) P_t g(x,\eta)]\,\lambda(\dd x)\dd t
		\le \int^\infty_0 e^{-t}\int_\X \|g(x,\eta)\|_{L_\eta^{1+e^{-t}}} \|P_t g(x,\eta)\|_{L_\eta^{1+e^{t}}} \,\lambda(\dd x)\dd t.
	\end{equation*}
	Since $D_y g(x,\eta) \le 0$ for $\lambda$-almost every $x,y \in \X$ by our assumption, using the restricted hypercontractivity \eqref{eq:resthyp} of the semigroup $(P_t)_{t \ge 0}$ with $p=2$ in the first step, changing variable $v=1+e^{-t}$ in the second, and using the H\"{o}lder inequality again in the final step, we obtain
	\begin{multline*}
		\int^\infty_0 e^{-t} \int_\X \mathbb{E}[g(x,\eta) g(x,\eta^t)]\,\lambda(\dd x)\dd t \le  \int^\infty_0 e^{-t} \int_\X\|g(x,\eta)\|_{L_\eta^{1+e^{-t}}} \|g(x,\eta)\|_{L_\eta^2} \lambda(\dd x)\dd t\\
		 = \int^2_1 \int_\X\|g(x,\eta)\|_{L_\eta^v} \|g(x,\eta)\|_{L_\eta^2}  \lambda(\dd x)\dd v
		\le  \int^2_1 \int_\X \|g(x,\eta)\|_{L_\eta^1}^{\frac{2-v}{v}} \|g(x,\eta)\|_{L_\eta^2}^{\frac{2(v-1)}{v}+1}  \lambda(\dd x)\dd v.
	\end{multline*}
	Setting $b= \|g(x,\eta)\|_{L_\eta^1}/ \|g(x,\eta)\|_{L_\eta^2}$ then yields
	\begin{multline*}
		\int^\infty_0 e^{-t} \int_\X\mathbb{E}[g(x,\eta) g(x,\eta^t)]\,\lambda(\dd x)\dd t \le  \int^2_1 \int_\X b^{\frac{2}{v}-1}
		\|g(x,\eta)\|_{L_\eta^2}^{2}  \lambda(\dd x)\dd v\\
		\le 2 \int^1_0 \int_\X b^{u} \|g(x,\eta)\|_{L_\eta^2}^{2}  \lambda(\dd x)\dd u \le  2 \int_\X \frac{\|g(x,\eta)\|_{L_\eta^2}^{2}}{1+(1/2) \log (1/b)}  \lambda(\dd x)
	\end{multline*}
	concluding the proof.
\end{proof}

Next, we proceed with proving  \autoref{thm:Varlb}. Since $g(x):= g(x,\cdot) \in L^2_\eta$ for $\lambda$-almost every $x$, by \eqref{eq:FS} it admits the Fock space representation given by
\begin{equation}
	g(x)(\eta) = g(x,\eta) = \mathbb{E} [g(x,\eta)] + \sum_{n=1}^\infty I_n(f_n^x) = \sum_{n=0}^\infty  I_n(f_n^x).
\end{equation}
Note also that by the orthogonality \eqref{eq:Iorth} of the Wiener-Ito integrals, one further has
$$
\|g(x,\eta)\|_{L^2_\eta}^2=\mathbb{E} [g(x,\eta)^2 ]=   \sum_{n=0}^\infty n! \|f_n^x\|_n^2.
$$

\begin{proof}[Proof of \autoref{thm:Varlb}]
			We will argue similarly as in the proof of \cite[Theorem 1.1]{SchulteTrapp}. We have
			\begin{align*}
				\mathbb{E} \left[\int_\X g(x,\eta)^2\, \lambda(\dd x)\right] =  \int_\X  \sum_{n=0}^\infty n! \|f_n^x\|_n^2 \, \lambda(\dd x) = \sum_{n=0}^\infty n! w_n,
			\end{align*}
			where for $n \ge 0$, we write $w_n = \int_\X \|f_n^x\|_n^2 \,\lambda(\dd x) \in [0,\infty)$ (since $g \in  L^2 (\lambda \otimes \P_\eta)$). On the other hand, by our assumption, $g(x,\eta) \in \operatorname{dom} D$ for $\lambda$-a.e.\ $x\in \X$. Thus, arguing as we do above considering the decomposition \eqref{eq:Dexp} for $D_y g(x,\eta)$, we obtain
			\begin{multline*}
				\mathbb{E} \left[\int_{\X^2} \left(D_y g(x,\eta) \right)^2\, \lambda^2(\dd x,\dd y)\right] =\int_{\X^2}  \sum_{n=0}^\infty n! \left\|\frac{1}{n!} \mathbb{E} D^{n}_{\cdot, \cdot, \hdots, \cdot} D_y f(x,\eta) \right\|_n^2 \lambda^2(\dd x,\dd y)\\
				= \sum_{n=0}^\infty (n+1)^2 n! \int_\X \|f_{n+1}^x\|_{n+1}^2 \lambda(\dd x)=  \sum_{n=1}^\infty n n! \int_\X \|f_{n}^x\|_n^2 \lambda(\dd x) =  \sum_{n=0}^\infty n n! w_n.
			\end{multline*}
			Thus, our assumption translates to the condition that 
			$$
			\sum_{n=0}^\infty n! w_n (\alpha - n) \ge 0.
			$$
			Next, employing a similar argument, by \eqref{eq:Ptexp}, we also have
			\begin{align}\label{eq:varianceexp}
				&\int^\infty_0 e^{-t} \int_\X \mathbb{E}[g(x,\eta) g(x,\eta^t)]\,\lambda(\dd x)\dd t \nonumber\\
				&= \int^\infty_0 e^{-t}\int_\X \mathbb{E}\left[g(x,\eta) P_t g(x,\eta) \right]\,\lambda(\dd x)\dd t \nonumber\\
				&=\int^\infty_0 e^{-t}\int_\X \mathbb{E}\bigg[\Big(\sum_{n=0}^\infty  I_n(f_n^x) \Big) \Big(\sum_{n=0}^\infty  e^{-nt}I_n(f_n^x) \Big) \bigg]\,\lambda(\dd x)\dd t \nonumber\\
				&= \int^\infty_0 \int_\X \sum_{n=0}^\infty  e^{-(n+1)t} n! \|f_n^x\|_n^2 \,\lambda(\dd x)\dd t =  \sum_{n=0}^\infty \frac{n!}{n+1} w_n.
			\end{align}
			Thus,
			\begin{multline*}
				(1+\alpha)^2	\int^\infty_0 e^{-t} \int_\X \mathbb{E}[g(x,\eta) g(x,\eta^t)]\,\lambda(\dd x)\dd t - (1+\alpha) \mathbb{E} \left[\int_\X g(x,\eta)^2 \,\lambda(\dd x)\right]\\
				= \sum_{n=0}^\infty n! w_n \left(\frac{(1+\alpha)^2}{n+1} - (1+\alpha)\right) \ge  \sum_{n=0}^\infty n! w_n \left(\alpha - n \right) \ge 0, 
			\end{multline*}
			where for the penultimate step, we use the fact that for all $n \ge 0$, we have $n! w_n \ge 0$ and
			$$
			\frac{(1+\alpha)^2}{n+1}  \ge (2\alpha - n+1),
			$$
			which is equivalent to the trivial inequality
			$(n-\alpha)^2 \ge 0$. This yields the result.
	\end{proof}

\begin{proof}[Proof of \autoref{lem:T_3order}]
	Since $F \in \operatorname{dom} D$ and $g_1, g_2$ have disjoint supports, $g_1(x) = g_1(x,\cdot)$ and $g_2(x) = g_2(x,\cdot)$ are square integrable for $\lambda$-a.e.\ $x \in \X$. Let their Fock space representations be given by
	$$
	g_1(x,\eta) =  \sum_{n=0}^\infty  I_n(f_{n,1}^x) \quad \text{and} \quad g_2(x,\eta) =  \sum_{n=0}^\infty  I_n(f_{n,2}^x).
	$$
Denote for $i \in \{1,2\}$ the functions
	$$
	f_{n+1,i}(x,x_1, \hdots,x_n) = f_{n,i}^x(x_1, \hdots,x_n).
	$$
	Arguing as for \eqref{eq:varianceexp}, we obtain for $i \in \{1,2\}$ that
	\begin{align*}
			&\int^\infty_0 e^{-t} \int_\X\mathbb{E}[g_i(x,\eta) g_i(x,\eta^t)]\, \lambda(\dd x)\dd t \\
			&= \int^\infty_0 \int_\X \sum_{n=0}^\infty  e^{-(n+1)t} n! \|f_{n,i}^x\|_n^2 \, \lambda(\dd x)\dd t = \sum_{n=0}^\infty \frac{n!}{n+1} \|f_{n+1,i}\|_{n+1}^2
	\end{align*}
	so that by our assumption, we have
	\begin{equation}\label{eq:varianceorder'}
		\sum_{n=0}^\infty \frac{n!}{n+1} \|f_{n+1,1}\|_{n+1}^2 \gg \sum_{n=0}^\infty \frac{n!}{n+1} \|f_{n+1,2}\|_{n+1}^2.
	\end{equation}
Similarly,
\begin{align*}
		&\int^\infty_0 e^{-t} \int_\X \mathbb{E}[g_1(x,\eta) g_2(x,\eta^t)] \, \lambda(\dd x)\dd t 
		=  \sum_{n=0}^\infty \frac{n!}{n+1} \langle f_{n+1,1}, f_{n+1,2}\rangle_{n+1}.
\end{align*}
	Let $p_n \in (0,1)$, $n \ge 0$ be a sequence such that $p_n n!/(n+1) \in (0,1)$ and $\sum_{n=0}^\infty p_n n!/(n+1) = 1$, which gives rise to a probability distribution. Let $N \in \mathbb{N}_0$ be a random variable such that $\P(N=n) = p_n n!/(n+1)$, $n \ge 0$. Then, using the Cauchy-Schwarz inequality twice, we have
	\begin{multline*}
	\sum_{n=0}^\infty \frac{n!}{n+1} \langle f_{n+1,1}, f_{n+1,2}\rangle_{n+1} = \sum_{n=0}^\infty p_n \frac{n!}{n+1} \langle p_n^{-1/2}f_{n+1,1}, p_n^{-1/2}f_{n+1,2}\rangle_{n+1}\\
	= \mathbb{E} \langle p_N^{-1/2}f_{N+1,1}, p_N^{-1/2}f_{N+1,2}\rangle_{N+1} \le \mathbb{E} \|p_N^{-1/2}f_{N+1,1}\|_{N+1} \|p_N^{-1/2}f_{N+1,2}\|_{N+1}\\
	\le \sqrt{\mathbb{E}  \|p_N^{-1/2}f_{N+1,1}\|_{N+1}^2 \, \mathbb{E}  \|p_N^{-1/2}f_{N+1,2}\|_{N+1}^2}\\
	= \sqrt{\sum_{n=0}^\infty \frac{n!}{n+1} \|f_{n+1,1}\|_{n+1}^2 \, \sum_{n=0}^\infty \frac{n!}{n+1} \|f_{n+1,2}\|_{n+1}^2} \ll \sum_{n=0}^\infty \frac{n!}{n+1} \|f_{n+1,1}\|_{n+1}^2,
	\end{multline*}
	where the final step is due to \eqref{eq:varianceorder'}. This concludes the proof.
\end{proof}

\begin{proof}[Proofs of Corollaries \ref{cor:bddD1}, \ref{cor:bddD2} and \ref{cor:T3}]
	 \autoref{cor:bddD1} is immediate from \autoref{thm:L1-L2} upon noting via an argument similar to the proof of \autoref{lem:intermed} that
	$$
	e^{-t} \int_\X \mathbb{E}[g(x,\eta) g(x,\eta^t)]\lambda(\dd x) = \E |P^0 \cap P^t|.
	$$
	Similar equalities yield Corollaries \ref{cor:bddD2} and \ref{cor:T3} from \autoref{thm:Varlb} and \autoref{lem:T_3order}, respectively.
\end{proof}

\subsection{Proof of \autoref{thm:supcon_sos}}\label{sec:supproof}
Observe that
\begin{align*}
	D_yF_s(\eta_s)=\sum\limits_{x\in\eta_s+\delta_y} f_s(x,\eta_s+\delta_y)-\sum\limits_{x\in\eta_s} f_s(x,\eta_s)=f_s(y,\eta_s+\delta_y)+\sum\limits_{x\in\eta_s}D_yf_s(x,\eta_s).
\end{align*}
Thus, we may write $D_yF_s(\eta_s)=g_1(y,\eta_s)-g_2(y,\eta_s)$, where we let $g_1(y,\eta_s):=f_s(y,\eta_s+\delta_y)$ and $g_2(y,\eta_s):=-\sum\limits_{x\in\eta_s}D_yf_s(x,\eta_s)$.
The Poincar\'e inequality (\ref{thm:poincare}) then yields
\begin{align*}
	\Var(F_s(\eta))\leq\E \int_\X (D_x F_s(\eta_s))^2 \, \lambda_s(\dd x)
	= \int _\X\E [g_1(x,\eta_s)^2+g_2(x,\eta_s)^2-2g_1(x,\eta_s)g_2(x,\eta_s)] \,\lambda_s(\dd x).
\end{align*}
Now by assumption \ref{A2}, we have that $\E g_1(x,\eta_s)^2  \le \eps_s \E g_2(x,\eta_s)^2$ for $\lambda_s$-a.e.\ $x \in \X$. Further, note by the Cauchy-Schwarz inequality and assumption \ref{A2} that for $\lambda_s$-a.e.\ $x \in \X$,
$$
\E[g_1(x,\eta_s)g_2(x,\eta_s)]\leq\sqrt{{\E[g_1(x,\eta_s)^2]\E[g_2(x,\eta_s)^2]}}\le \sqrt{\eps_s} \E[g_2(x,\eta_s)^2].
$$
Combining this with the variance bound above yields
\begin{equation}\label{eq:Pub}
\E \int_\X (D_x F_s(\eta_s))^2 \, \lambda_s(\dd x)\ge (1-\sqrt{\eps_s})^2 \int_\X \E [g_2(x,\eta_s)^2] \,\lambda_s(\dd x),
\end{equation}
that is, the upper bound from the Poincar\'e inequality is exactly of the order $\int \E [g_2(x,\eta_s)^2] \lambda_s(\dd x)$. Now we show a tighter bound for the variance by a more careful variance estimation. By the Mecke formula \eqref{eq:Mecke}, we have
\begin{align*}
	\Var&(F_s)=\E F_s^2-(\E F_s)^2 \\
	&=\E\bigg[\sum\limits_{x\in \eta}f_s^2(x,\eta_s)+\sum\limits_{\substack{x_1,x_2\in\eta_s \\ x_1\neq x_2}}f_s(x_1,\eta)f_s(x_2,\eta_s)\bigg]-\left[\int_\X \E[f_s(x,\eta_s+\delta_x)]\, \lambda_s(\dd x)\right]^2 \\
	&=\int_\X \E[f_s^2(x,\eta_s+\delta_x)]\, \lambda_s(\dd x)+\int_{\X^2}\E[f_s(x_1,\eta_s+\delta_{x_1}+\delta_{x_2})f_s(x_2,\eta_s+\delta_{x_1}+\delta_{x_2})]\, \lambda_s^2(\dd x_1,\dd x_2) \\ &\qquad\qquad\qquad\qquad\qquad\qquad-\int_{\X^2}\E[f_s(x_1,\eta_s+\delta_{x_1})]\E[f_s(x_2,\eta_s+\delta_{x_2})] \,\lambda_s^2(\dd x_1,\dd x_2) \\
	&= \int_\X \E[g_1(x,\eta_s)^2]\, \lambda_s (\dd x)+T_s \le (1+C) \int_\X \E[g_1(x,\eta_s)^2]\, \lambda_s (\dd x),
\end{align*}
where the final step is by assumption \ref{A1} with $T_s$ as therein. This yields the first assertion. 

\medskip

Finally, assumption \ref{A2} implies
$$
\Var(F_s) \le (1+C) \eps_s \int_\X \E[g_2(x,\eta_s)^2]\, \lambda_s (\dd x) \le \frac{(1+C) \eps_s}{(1-\sqrt{\eps_s})^2} \E \int_\X (D_x F_s(\eta_s))^2 \, \lambda_s(\dd x),
$$
where the final step is due to \eqref{eq:Pub}. This concludes the proof.
\qed

\section*{Acknowledgements}
\addcontentsline{toc}{section}{Acknowledgements}

Both authors were supported by the German Research Foundation (DFG) Project 531540467. We thank Matthias Schulte for bringing this question to our attention, and both him and Giovanni Peccati for many helpful discussions.

\bibliographystyle{abbrv}
\bibliography{refs}
\end{document}